\documentclass[11pt]{amsart}

\usepackage[margin=1in]{geometry}
\usepackage{amsmath,amssymb,amsthm,mathtools}
\usepackage{enumitem}
\usepackage[colorlinks=true,linkcolor=blue,citecolor=blue,urlcolor=blue]{hyperref}

\newtheorem{theorem}{Theorem}[section]
\newtheorem{maintheorem}[theorem]{Theorem}
\newtheorem{proposition}[theorem]{Proposition}
\newtheorem{lemma}[theorem]{Lemma}
\newtheorem{corollary}[theorem]{Corollary}
\theoremstyle{definition}
\newtheorem{definition}[theorem]{Definition}

\theoremstyle{remark}
\newtheorem{remark}[theorem]{Remark}

\newcommand{\Leb}{\operatorname{Leb}}
\newcommand{\Acc}{\operatorname{Acc}}
\newcommand{\Cov}{\operatorname{Cov}}
\newcommand{\Var}{\operatorname{Var}}
\newcommand{\eps}{\varepsilon}
\newcommand{\dd}{\,d}
\newcommand{\N}{\mathbb N}

\newcommand{\weakstar}{\stackrel{*}{\rightharpoonup}}
\numberwithin{equation}{section}

\title[Sparse empirical measures in the non-statistical regime]
{Sparse empirical measures for non-statistical interval maps with two neutral fixed points}

\author{Farrukh Mukhamedov}
\address{Department of Mathematical Sciences, College of Science,
United Arab Emirates University, Al Ain, United Arab Emirates}
\email{far75m@yandex.ru; farrukh.m@uaeu.ac.ae}
\urladdr{https://orcid.org/0000-0001-5728-6394}
\date{}

\hypersetup{
  pdftitle={Sparse empirical measures for non-statistical interval maps with two neutral fixed points},
  pdfauthor={Farrukh Mukhamedov}
}

\subjclass[2020]{37A40, 37A30, 37C40, 37C83, 37E05}
\keywords{historic behavior, non-statistical dynamics, sparse empirical measure, intermittent map, strong natural measure, Gibbs--Markov inducing, lacunary sampling}

\begin{document}

\begin{abstract}
We consider balanced doubly intermittent full-branch interval maps with two
neutral fixed points.  Although the ordinary empirical measures fail to
converge for Lebesgue-almost every initial point, we show that this
non-statistical behavior persists under a broad class of deterministic
observation schemes: if the sampling sequence is block-dominating, then both
endpoint Dirac masses are accumulation points of the sparse empirical
measures for Lebesgue-almost every orbit.  This class includes sampling
sequences of positive lower density, arithmetic and polynomial sequences,
and finite-index regularly varying sequences.  We also prove that every map
in the balanced class has the strong natural-measure property, including at
the boundary exponent $\beta=1$: the pushforwards of every absolutely
continuous initial probability converge to a distinguished endpoint mixture
$\nu_*$.  Abstract consequences of this one-time limit include annealed
convergence along every deterministic sampling sequence, the existence of
arbitrarily sparse sequences with almost-sure convergence, and an
almost-sure convergence criterion based on a power-saving variance bound.
For the symmetric quadratic map $g_1\in\mathfrak F_*$, we verify the
dyadic covariance condition on a countable uniformly dense endpoint-flat
family: the
double covariance sum is $O_\phi(n\log(n+2))$.  Consequently its dyadic
empirical measures converge for Lebesgue-almost every initial point to
$\tfrac12(\delta_{-1}+\delta_1)$.
\end{abstract}

\maketitle
\enlargethispage{2pt}

\section{Introduction}
\label{sec:introduction}

For a Borel measurable map $T:X\to X$ of a compact metric space $X$, the statistical
behavior of an orbit is usually described by the Birkhoff averages
\[
  \frac1n\sum_{j=0}^{n-1}\varphi(T^j x),
  \qquad \varphi\in C(X).
\]
Following Ruelle and Takens \cite{Rue01,Tak08}, an orbit is said to have
\emph{historic behavior} if this average fails to converge for at least one
continuous observable.  Equivalently, the ordinary empirical measures
\[
  \frac1n\sum_{j=0}^{n-1}\delta_{T^j x}
\]
fail to converge weak-star. In fact, compactness of $\mathcal P(X)$ in the
weak-star topology gives subsequential limits of the empirical measures, and
distinct limits are separated by a continuous function.  Conversely,
convergence of the empirical measures gives convergence of every continuous
Birkhoff average.  Birkhoff's ergodic theorem \cite{Bir31}, together with
separability of $C(X)$, implies that the historic set has measure zero for
every $T$-invariant Borel probability measure.  Historic behavior can
nevertheless be large relative to other notions of size.

We notice that the classical Bowen-type heteroclinic examples provide an early paradigm:
successively longer visits near different saddle points can force the time
averages to oscillate \cite{Gau92,Tak94}.  Takens later asked whether historic
behavior could occur on a positive-Lebesgue-measure set in a persistent
smooth setting \cite{Tak08}.  Kiriki and Soma in \cite{KS17} answered a version of this
problem by proving that, in every Newhouse open set of $C^r$ surface
diffeomorphisms, $2\le r<\infty$, maps with a nontrivial wandering domain whose
points have historic behavior form a dense subset.  Thus the
conclusion is density within each such open set, rather than persistence for
all its maps.  In a complementary direction, irregular sets can carry
full topological entropy and full Hausdorff dimension in several uniformly
hyperbolic and conformal settings \cite{BS00}; the saturated-set formalism of
Pfister and Sullivan gives a broad entropy framework for prescribed compact
connected sets of invariant accumulation measures under their
$g$-almost-product and uniform-separation hypotheses \cite{PS07}. In \cite{JM24} stochastic dynamical systems with
historical behavior have been investigated.

A distinct ensemble-level precursor is due to Hofbauer and Keller, who
constructed quadratic maps for which the Ces\`aro averages
$N^{-1}\sum_{j=0}^{N-1}(f^j)_*\Leb$ have no weak-star limit \cite{HK90}.
Their asymptotic-measure problem is different both from orbitwise historic
behavior and from the direct one-time convergence of pushforwards studied
below.

Intermittent interval maps with competing indifferent regions can realize a
different orbitwise mechanism.  The oscillation is then produced by
heavy-tailed laminar sojourns rather than by a heteroclinic cycle.  Limit laws
for long sojourns near indifferent fixed points and for occupation times of
infinite-measure components were developed by Thaler and by
Aaronson--Thaler--Zweim\"uller \cite{Tha02,ATZ05}.  These works provide
foundational results for the residence-time mechanism relevant to the maps
considered below.

Let $I=[-1,1]$ and let $g:I\to I$ be a balanced doubly intermittent map in
the class $\mathfrak F_*$ introduced in \cite{CLM23,CL24}.  The endpoints
$-1$ and $1$ are neutral fixed points and act as competing traps.  Coates and
Luzzatto proved that, for Lebesgue almost every $x$, the ordinary empirical
measures
\[
  \mu_n(x)=\frac1n\sum_{r=0}^{n-1}\delta_{g^r x}
\]
do not converge.  More precisely, their weak-star accumulation set is
\[
  \Omega=\{p\delta_1+(1-p)\delta_{-1}:p\in[0,1]\}.
\]
Thus almost every orbit has historic behavior, and the conclusion is sharper
than nonconvergence alone: every endpoint mixture occurs along a subsequence
of ordinary times.  There is no conflict with Birkhoff's theorem.  Lebesgue
measure is not invariant, while the invariant measure equivalent to
Lebesgue for this balanced regime is sigma-finite and infinite
\cite{CLM23,CL24}.

A complementary dimension-theoretic result was obtained by Coates and Gelfert
\cite[Theorem~A, Corollary~2.5, and Example~8.3]{CG26}.  For a class of
Markov interval maps with $d\ge2$ equally sticky neutral fixed points and
common excursion-tail exponent $\alpha\in(0,1)$, they prove that the ordinary
full-time basin of every fixed-point mixture has full Hausdorff dimension.
More generally, every prescribed nonempty closed connected subset of the fixed-point
simplex occurs as the exact accumulation set of the ordinary empirical
measures on a set of full Hausdorff dimension.  Their Example~8.3 includes,
through the inducing construction of \cite[Section~4.2]{CMT26}, the
two-neutral-fixed-point maps with critical or singular preimages introduced
in \cite{CLM23}.  For the balanced class considered here, the overlap is the
range $\beta>1$, equivalently $\alpha=1/\beta\in(0,1)$; the boundary case
$\beta=1$ is outside the exponent range assumed in \cite{CG26}.  The two
works address complementary questions.  Coates--Gelfert keep the full
observation sequence $0,1,2,\ldots$ and quantify exceptional sets of initial
points by Hausdorff dimension.  We keep Lebesgue-typical initial points and
vary the deterministic observation sequence, distinguishing schedules that
preserve non-statistical behavior from schedules that recover the
distinguished natural mixture.

This paper studies what remains of this behavior when the orbit is observed only at a deterministic sequence $\mathbf{a}=\{a_n\}$ such that
\[
  0\le a_0<a_1<a_2<\cdots.
\]
The corresponding sparse empirical measures are
\[
  \mu_n^\mathbf{a}(x)=\frac1n\sum_{k=0}^{n-1}\delta_{g^{a_k}x}.
\]
These are not consecutive-time empirical measures, and their accumulation
points need not be $g$-invariant.  Accordingly, failure of $\mu_n^\mathbf{a}(x)$ to
converge is a sampled analogue of historic behavior, not historic behavior in
the usual full-time sense.  For every $\varphi\in C(I)$,
\[
  \int_I\varphi\,d\mu_n^\mathbf{a}(x)
  =\frac1n\sum_{k=0}^{n-1}\varphi(g^{a_k}x).
\]
In particular, if both $\delta_{-1}$ and $\delta_1$ are accumulation points,
then the sampled averages diverge for every continuous $\varphi$ with
$\varphi(-1)\ne\varphi(1)$.  Conversely, weak-star convergence of
$\mu_n^\mathbf{a}(x)$ is precisely simultaneous convergence of these sampled averages
for all continuous observables.

Deterministic subsequence averages have their own classical history.  In
the setting of an invertible probability-preserving transformation $T$,
Bourgain proved that, for an integer-coefficient polynomial
$P\in\mathbb Z[t]$, the averages
\[
  \frac1N\sum_{k=1}^{N} f\circ T^{P(k)}
\]
converge almost surely for every $f\in L^q$, $q>1$
\cite[Theorem~1]{Bou89}, whereas
strong sweeping-out phenomena show that lacunary sampling can behave very
differently \cite{ABJLRW96}.  Those theorems concern invariant probability
spaces and do not apply directly here: Lebesgue measure is noninvariant and
the relevant absolutely continuous invariant measure is infinite.  They do,
however, emphasize that sparsity by itself neither forces convergence nor
forces nonconvergence.  The results below instead exploit the particular
excursion geometry of the intermittent map and the one-time convergence of
its pushforwards.

The question is natural for two closely related reasons.  First,
non-statisticality in this setting is generated by a temporal mechanism:
typical trajectories alternate between long laminar excursions near the two
neutral fixed points, and some excursions become comparable with, or much
longer than, all the time elapsed before them.  Ordinary empirical measures
give equal weight to every iterate and therefore combine two distinct
features: the geometry of these excursion blocks and the distribution of the
observation times among them.  Passing to a deterministic subsequence
separates the two.  It tests whether nonconvergence is an intrinsic feature of
the orbit, independent of the manner in which it is observed, or whether it
is tied to the full-time averaging protocol.  This question is particularly
natural for intermittent dynamics, since omitting or over-representing a
single exceptionally long excursion can substantially change the empirical
balance between the two endpoints.

Second, sparse sampling reveals a statistical structure hidden beneath the
pathwise oscillation.  The ordinary empirical measures of almost every orbit
have the whole segment $\Omega$ as their accumulation set, whereas the
one-time pushforwards of every absolutely continuous ensemble converge to a
single distinguished mixture $\nu_*$.  The results below explain how these
apparently conflicting facts coexist.  Sampling schemes that retain the
dominant excursion blocks preserve non-statistical behavior; suitably chosen
sufficiently sparse schemes can make the selected observations weakly
correlated enough to recover $\nu_*$ almost surely; and every deterministic
scheme recovers $\nu_*$ after averaging over an absolutely continuous
ensemble.  Thus the issue is not merely whether empirical measures converge,
but which temporal scales and observation protocols reveal the trapping
mechanism and which reveal the underlying natural measure.  In this way,
sparse sampling gives a finer description of non-statistical dynamics by
distinguishing robust orbit-wise memory from statistical regularity that
becomes visible under temporal thinning or ensemble averaging.

The first result identifies a broad class of sampling sequences that necessarily preserve the endpoint oscillations.  Write
\[
  A(t)=\#\{k\ge0:a_k<t\},
  \qquad t>0.
\]
We call $\mathbf{a}$ block-dominating when
\[
  \lim_{\substack{R\to\infty\\ R\in\mathbb N}}
  \liminf_{\substack{N\to\infty\\ N\in\mathbb N}}
  \frac{A(RN)}{A(N)}=\infty.
\]
The long-excursion mechanism in \cite{CL24} produces, near each endpoint, blocks whose length is arbitrarily large relative to all elapsed time.  Block domination says exactly that, when such a block occurs, the sampled observations inside the block asymptotically dominate all earlier sampled observations.  Consequently both $\delta_1$ and $\delta_{-1}$ remain sparse accumulation points.

The second map-specific part of the paper concerns the strong natural-measure property
\[
  (g^n)_*\lambda\weakstar\nu_*
  \qquad\text{for every probability }\lambda\ll\Leb,
\]
where
\[
  \nu_*=p_*\delta_1+(1-p_*)\delta_{-1},
  \qquad
  p_*=\frac{C_+}{C_++C_-},
\]
and $C_\pm$ are the two endpoint-tail constants.  We prove this property for every map in $\mathfrak F_*$.  A point that requires care is the relation between the two inducing schemes used in \cite{CLM23,CL24} and \cite{CMT26}.  We give the exact cylinder decomposition, including the index shifts, which combines the finite-flight assumption with the full-return expansion of \cite{CLM23} to verify condition F2(2) of \cite{CMT26}.  For $\beta>1$, hypotheses H1--H4 then follow from Theorem~4.9 of \cite{CMT26}.  At the boundary $\beta=1$, we verify the standing assumptions and H1--H4 directly.  For exactness, we equip the saturated tower with an explicit probability measure equivalent to its invariant sigma-finite measure, verify the Markov, generating, Radon--Nikodym Jacobian, R\'enyi-distortion, conservativity, and aperiodicity hypotheses of the Markov-fibred-system criterion \cite[Theorem~3.2]{ADU93}, and then descend exactness through the factor map.  We also reconcile the endpoint conventions and derive the tail constants from Lemma~4.11 of \cite{CMT26} by direct summation.

These two contributions---persistence under block-dominating sampling and the strong natural-measure theorem for the balanced class---are the results that use the specific intermittent dynamics.  The later annealed, variance, covariance, and diagonal-selection results are abstract consequences of one-time convergence and elementary second-moment arguments.  First, for every deterministic sampling sequence, the averaged pushforwards
\[
  \frac1n\sum_{k=0}^{n-1}(g^{a_k})_*\lambda
\]
converge to $\nu_*$.  Second, one can select observation times, as sparse as desired, so that the corresponding sparse empirical measures converge to $\nu_*$ for almost every initial point.  The construction uses the correlation form of the strong natural-measure property and chooses each new observation time so that finitely many previously selected observables have summably small covariances with the new one.

For a fixed prescribed sequence, one-time convergence is not enough.  A
standard sufficient route to almost-sure convergence is a power-saving
variance bound, which in turn can be verified by a double covariance
estimate.  We prove a general criterion of this form and state its dyadic
specialization.  A one-time renewal estimate does not by itself yield the
uniform two-time estimate required after conditioning at an earlier
observation time.  For the symmetric quadratic map $g_1$, however, symmetry
and the exact quadratic endpoint law permit a complete-cylinder
decomposition.  In Appendix~\ref{app:g1-dyadic}, we combine the exponent-one
operator-renewal theorem of Melbourne--Terhesiu \cite{MT12} with a separate
estimate on the odd renewal channel to obtain an $O(n\log n)$ dyadic
covariance bound on a countable uniformly dense endpoint-flat family of
observables.  Thus the criterion is verified for $g_1$, while its
verification for every map in $\mathfrak F_*$ remains open.

The paper is organized as follows.  Section~\ref{sec:setup} gives the
definitions and inducing input, Section~\ref{sec:block} proves persistence
under block-dominating sampling, Section~\ref{sec:natural} proves the strong
natural-measure result and its annealed consequence, and
Section~\ref{sec:strong-law} develops the abstract variance and selection
criteria.  Appendix~\ref{app:g1-dyadic} verifies the dyadic criterion for
$g_1$.

\section{Setup and inducing facts}
\label{sec:setup}

\subsection{Empirical measures and notation}

Throughout,
\[
  I=[-1,1],
  \qquad
  m=\frac{\Leb|_I}{\Leb(I)}
\]
denotes normalized Lebesgue measure.  We use
$\N=\{1,2,\ldots\}$ and $\N_0=\{0,1,2,\ldots\}$.  For nonnegative
quantities $u$ and $v$, the notation $u\lesssim v$ means that
$u\le Cv$ for a constant $C$ independent of the asymptotic indices under
consideration.  Weak-star convergence of Borel probability measures on $I$ is denoted by $\weakstar$, and $C(I)$ denotes the real-valued continuous functions on $I$.  For a measurable map $f:I\to I$, define
\[
  \mu_n^f(x)=\frac1n\sum_{r=0}^{n-1}\delta_{f^r x}.
\]
When $f=g$, we simply write $\mu_n(x)$.

\begin{definition}
An invariant Borel probability measure $\nu$ is a \textit{physical measure} for a measurable map $f:I\to I$ if its basin
\[
  \mathcal B_f(\nu)
  =\left\{x\in I:\frac1n\sum_{r=0}^{n-1}\delta_{f^r x}\weakstar\nu\right\}
\]
has positive $m$-measure.
\end{definition}

Let $\mathbf{a}=(a_k)_{k\ge0}$ be a strictly increasing sequence of nonnegative integers.  Define
\[
  \mu_n^\mathbf{a}(x)=\frac1n\sum_{k=0}^{n-1}\delta_{g^{a_k}x}
\]
and
\[
  X_\mathbf{a}=\{x\in I:\mu_n^\mathbf{a}(x)\text{ does not converge weak-star}\}.
\]
For arithmetic sampling $a_k=qk$, we write $X_{\mathrm{arith},q}$.  For dyadic sampling $d_k=2^k$, we write
\[
  \mu_n^{\mathrm{dyad}}(x)
  =\frac1n\sum_{k=0}^{n-1}\delta_{g^{2^k}x},
  \qquad
  X_{\mathrm{dyad}}=X_\mathbf{d}, \ \ \mathbf{d}=\{d_k\}.
\]
This notation keeps arithmetic sampling with step $2$ distinct from dyadic sampling.

For $\eps>0$, let
\[
  U_\eps^+=(1-\eps,1],
  \qquad
  U_\eps^-=[-1,-1+\eps).
\]
Adding the endpoints to the open neighborhoods used in \cite{CL24} does not alter any almost-everywhere statement.

\subsection{The balanced map class}

We give the definition of the class used in this paper.  This also fixes the
translation between the sign notation used below and the indexed notation in
\cite{CLM23,CL24}.  Write
\[
  I_-=[-1,0],\qquad I_+=[0,1],
\]
and denote the two open branches of a map $g:I\to I$ by $g_-$ and $g_+$,
also using these symbols for their one-sided continuous extensions to $0$.
Thus $g_-(0)=1$ and $g_+(0)=-1$ in the local model.  We fix one branch
value as the convention for $g(0)$ (for instance, $g(0)=1$).  Any other
convention that does not create an additional fixed point changes only the
forward orbit of the discontinuity and its countable backward orbit, a
Lebesgue-null set.  The
relevant maps have two neutral endpoints, so their defining conditions are as follows:

\medskip
\noindent\textbf{(A0) Full-branch structure.}
The restrictions
\[
  g_-:\mathring I_-\longrightarrow(-1,1),
  \qquad
  g_+:\mathring I_+\longrightarrow(-1,1)
\]
are orientation-preserving $C^2$ diffeomorphisms, and the only fixed points
of $g$ are $-1$ and $1$.

\medskip
\noindent\textbf{(A1) Local forms.}
There are constants
\[
  \ell_-,\ell_+,\iota,k_-,k_+,a_-,a_+,b_-,b_+>0
\]
such that, with
\[
  U_{0-}=(-\iota,0],\qquad U_{0+}=[0,\iota),
  \qquad U_{-1}=g_+(U_{0+}),\qquad U_{+1}=g_-(U_{0-}),
\]
the endpoint branches satisfy
\begin{equation}
  g_-(x)=x+b_-(1+x)^{1+\ell_-}\quad(x\in U_{-1}),
  \qquad
  g_+(x)=x-b_+(1-x)^{1+\ell_+}\quad(x\in U_{+1}).
  \label{eq:A1-endpoints}
\end{equation}
At the two sides of the discontinuity,
\begin{equation}
  g_-(x)=1-a_-|x|^{k_-}\quad(x\in U_{0-})
  \quad\text{if }k_-\ne1,
  \qquad
  g_+(x)=-1+a_+x^{k_+}\quad(x\in U_{0+})
  \quad\text{if }k_+\ne1.
  \label{eq:A1-critical}
\end{equation}
If $k_-=1$ or $k_+=1$, the corresponding formula in
\eqref{eq:A1-critical} is replaced by
\[
  g_-'(0)=g'(0-)=a_->1
  \qquad\text{or}\qquad
  g_+'(0)=g'(0+)=a_+>1,
\]
respectively, together with monotonicity on the corresponding one-sided
neighborhood.  This is the one-sided derivative convention used throughout.

\medskip
\noindent\textbf{(A2) Finite-flight expansion.}
Define
\[
  \Delta_0^-=g_-^{-1}((0,1)),
  \qquad
  \Delta_0^+=g_+^{-1}((-1,0)),
\]
and, for $n\ge1$,
\[
  \Delta_n^-=g_-^{-1}(\Delta_{n-1}^-),
  \qquad
  \Delta_n^+=g_+^{-1}(\Delta_{n-1}^+),
\]
\[
  \delta_n^-=g_-^{-1}(\Delta_{n-1}^+)\cap\Delta_0^-,
  \qquad
  \delta_n^+=g_+^{-1}(\Delta_{n-1}^-)\cap\Delta_0^+.
\]
If $\Delta_0^\pm\subset U_{0\pm}$, set $n_\pm=0$; otherwise set
\[
  n_\pm=\min\{n\ge1:\delta_n^\pm\subset U_{0\pm}\}.
\]
There is a constant $\lambda_A>1$ such that, for either sign,
\begin{equation}
  (g^n)'(x)>\lambda_A
  \qquad
  (1\le n\le n_\pm,\ x\in\delta_n^\pm).
  \label{eq:A2-expansion}
\end{equation}
When $n_\pm=0$, the corresponding requirement is empty.

In the notation of \cite{CLM23,CL24},
\[
  (\ell_1,\ell_2,k_1,k_2,a_1,a_2,b_1,b_2)
  =(\ell_-,\ell_+,k_-,k_+,a_-,a_+,b_-,b_+).
\]
The two stickiness parameters are
\begin{equation}
  \beta^-=\ell_-k_+,
  \qquad
  \beta^+=\ell_+k_-,
  \qquad
  \beta=\max\{\beta^-,\beta^+\}.
  \label{eq:stickiness}
\end{equation}
The balanced non-statistical class used in this paper is therefore
\begin{equation}
  \mathfrak F_*
  =\left\{g:\text{$g$ satisfies \textup{(A0)--(A2)}, }
      \beta\ge1,\ \beta^-=\beta^+\right\}.
  \label{eq:def-Fstar}
\end{equation}
This is exactly the class denoted by $\mathfrak F_*$ in \cite{CL24}.
The larger ambient class in \cite{CLM23} also permits an expanding endpoint
with $\ell_-=0$ or $\ell_+=0$ and imposes the corresponding relaxed endpoint
regularity clause.  Those alternatives cannot occur in \eqref{eq:def-Fstar}:
indeed, $\beta^-=\beta^+\ge1$ forces both $\ell_->0$ and $\ell_+>0$.
Thus \textup{(A0)--(A2)} above are the complete defining conditions needed
here, not a reduced substitute for the boundary cases of $\mathfrak F_*$.

In the sequel, we always assume that
\begin{equation}
  g\in\mathfrak F_*,
  \qquad
  \beta^- =\beta^+=:\beta\ge1,
  \qquad
  \alpha=\frac1\beta\in(0,1].
  \label{eq:balance}
\end{equation}

The first-return construction in \cite{CLM23,CL24} provides two inducing
bases, one on each side of the discontinuity.  The associated first-return
maps are full-branch Gibbs--Markov maps and have invariant probability
measures equivalent to Lebesgue on their bases.  To define the tail constants
used throughout, we fix the left inducing base $\Delta_0^-$ and denote its
invariant probability by $\widehat\mu=\widehat\mu_-$.  We denote the
corresponding invariant probability on the right inducing base by
$\widehat\mu_+$.  If $\tau^+$ and $\tau^-$ denote,
respectively, the lengths of the excursions toward $1$ and $-1$ in one return
cycle from this base, then
\begin{equation}
  \widehat\mu(\tau^+>n)\sim C_+n^{-\alpha},
  \qquad
  \widehat\mu(\tau^->n)\sim C_-n^{-\alpha},
  \label{eq:tails}
\end{equation}
with $C_+,C_->0$; see \cite[Proposition~3.1]{CL24}.  Set
\begin{equation}
  p_*=\frac{C_+}{C_++C_-},
  \qquad
  \nu_*=p_*\delta_1+(1-p_*)\delta_{-1}.
  \label{eq:nustar}
\end{equation}

\begin{theorem}\cite{CL24}
\label{thm:CL}
For $m$-almost every $x\in I$,
\[
  \Acc\{\mu_n(x):n\ge1\}
  =\{p\delta_1+(1-p)\delta_{-1}:p\in[0,1]\}.
\]
In particular, $g$ is non-statistical and has no physical measure.
\end{theorem}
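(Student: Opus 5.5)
This statement is cited from \cite{CL24}, so in the paper it is invoked rather than reproved.  If I were to prove it, I would follow the two-sided inducing scheme recalled above.  The plan has three steps: an upper bound on the accumulation set, then the two endpoint Dirac masses as accumulation points, then the intermediate mixtures.

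\emph{Upper bound.}  First I would show that every weak-star accumulation point of $\mu_n(x)$ is supported on $\{-1,1\}$.  Fix $\eps>0$ and consider the time spent outside $U_\eps^+\cup U_\eps^-$.  Each return cycle to the inducing base contributes a uniformly bounded number of iterates outside these neighborhoods, since visits to $I\setminus(U_\eps^+\cup U_\eps^-)$ happen only during the finite-flight and transition parts of a cycle (by (A1)--(A2)).  The return times, by contrast, have infinite mean under $\widehat\mu$, because \eqref{eq:tails} holds with $\alpha\le1$.  The ergodic theorem for the Gibbs--Markov first-return map then shows that, after $k$ returns, the bounded pieces contribute $O(k)$ iterates while the total elapsed time $\sum_{i<k}(\tau_i^++\tau_i^-)$ grows superlinearly in $k$.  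For $\alpha=1$ this uses that $\sum_{i<k}\tau_i/k\to\infty$ almost surely, which follows by truncation.  Since $\widehat\mu$ is equivalent to Lebesgue on the base and almost every point enters the base, this gives the claim for $m$-almost every $x$.

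\emph{Extreme points.}  Next I would show that $\delta_1$ and $\delta_{-1}$ are both accumulation points.  I would prove that almost surely, infinitely often, a single excursion $\tau_k^{+}$ is much larger than $R$ times the total time elapsed before it, for every $R$; the same must hold for $\tau_k^{-}$.  For $\alpha<1$, this follows from the i.i.d.-like structure of the excursions: the partial sums $S_k$ are of order $k^{1/\alpha}$ up to slowly varying corrections, and with exponentially mixing Gibbs--Markov dependence a Borel--Cantelli argument gives
\[
  \widehat\mu\bigl(\tau_k^+>R S_{k}\bigr)\gtrsim R^{-\alpha}\,\widehat\mu\text{-typically},
\]
together with sufficient independence along a sparse subsequence of $k$.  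Hence the events $\{\tau_k^+>RS_k\}$ occur infinitely often.  For $\alpha=1$, the sums satisfy $S_k\approx k\log k$, and $\sum_k\widehat\mu(\tau>Rk\log k)\approx\sum 1/(Rk\log k)=\infty$ still diverges, so the same argument applies.

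\emph{Intermediate mixtures.}  The accumulation set of $\mu_n(x)$ is closed and connected, since $d(\mu_n,\mu_{n+1})\to0$.  It is contained in the segment $\{p\delta_1+(1-p)\delta_{-1}:p\in[0,1]\}$ by the upper bound, and it contains both endpoints of that segment by the second step.  It is therefore the whole segment.  Non-statisticality is then immediate.  No physical measure exists either, because the basin of any invariant measure would have to be a set where $\mu_n$ converges, and that set is $m$-null.

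\emph{Main obstacle.}  The hard part is the second step: establishing the quasi-independence needed for a second Borel--Cantelli lemma for the events $\{\tau_k^\pm>R S_k\}$, which depend on the entire past.  My first attempt would be to condition on $S_k$ and use bounded distortion of the Gibbs--Markov map.  This gives $\widehat\mu(\tau_k^+>t\mid\mathcal F_{k})\asymp\widehat\mu(\tau^+>t)$ uniformly, and a conditional (L\'evy) Borel--Cantelli lemma then applies once $S_k\le k^{1/\alpha+\delta}$ is known almost surely.  I would also have to check that the balance condition $\beta^-=\beta^+$ in \eqref{eq:balance} is what ensures both signs occur with the same exponent, so that neither endpoint dominates.
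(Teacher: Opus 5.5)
The paper does not reprove this statement. It cites \cite[Theorem~3]{CL24} and adds only the deduction that no physical measure exists, which you also give. Your Steps~1 and~3 are sound. In Step~1, each return cycle has a bounded number of iterates outside $U_\eps^+\cup U_\eps^-$, while $S_k/k\to\infty$. In Step~3, the accumulation set is connected because $\mu_{n+1}-\mu_n$ has total variation at most $2/(n+1)$. The gap is in Step~2, which carries the whole content of the theorem. It is exactly the full-measure statement $\limsup_k\tau_+^-(G_-^ky)/S_k^-(y)=\infty$ that the paper imports from \cite{CL24} in the proof of Proposition~\ref{prop:excursions}.

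Your reduction is the right one. Bounded distortion gives $\widehat\mu(\tau_k^+>RS_k\mid\mathcal F_k)\asymp(RS_k)^{-\alpha}$, so by L\'evy's conditional Borel--Cantelli lemma the events occur infinitely often if and only if $\sum_kS_k^{-\alpha}=\infty$ almost surely. But the input you propose cannot deliver this.

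\emph{Why your input fails.} The bound $S_k\le k^{1/\alpha+\delta}$ only yields terms $\gtrsim R^{-\alpha}k^{-1-\alpha\delta}$, a summable series, so L\'evy's lemma gives nothing. No almost-sure envelope can repair this. Since $\widehat\mu(\tau^+>k^{1/\alpha}L(k))\asymp k^{-1}L(k)^{-\alpha}$, the same Borel--Cantelli argument shows that any eventual bound $S_k\le k^{1/\alpha}L(k)$ forces $\sum_kk^{-1}L(k)^{-\alpha}<\infty$. Inserting such a bound into the L\'evy sum therefore again produces a convergent series.

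At $\alpha=1$ the heuristic ``$S_k\approx k\log k$'' holds only in probability; almost surely $S_k/(k\log k)$ is unbounded. Since $\sum_k(k\log k)^{-1}$ diverges too slowly to absorb any almost-sure loss, that case fails for the same reason. Separately, your display $\widehat\mu(\tau_k^+>RS_k)\gtrsim R^{-\alpha}$ is off by a factor $k^{-1}$. A bound of order $R^{-\alpha}$ holds only for the union over a dyadic block of indices.

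\emph{The missing idea.} What you need is that $S_k$ sits at its typical scale $b_k$ for a set of indices of infinite weight, where $b_k=k^{1/\alpha}$, or $b_k=k\log k$ if $\alpha=1$.
\begin{itemize}
\item A truncated first-moment estimate needs only stationarity and gives $\widehat\mu(S_k\le Mb_k)\ge\tfrac12$ for large $M$ and $k$.
\item Averaging with the weights $b_k^{-\alpha}$, whose sum diverges, shows that $\sum_kS_k^{-\alpha}=\infty$ on a set of positive measure.
\item That set is $G$-invariant because $S_{k+1}=\tau+S_k\circ G$, so by ergodicity it has full measure.
\end{itemize}
Alternatively, you can quote a Kesten-type theorem for $\psi$-mixing sequences: $\limsup_k\tau\circ G^k/S_k=\infty$ whenever $\tau$ is non-integrable.

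For the $-$ leg, the relevant elapsed time is $S_k+\tau_k^+$, not $S_k$. Either compare with $S_k+\tau_k^+\le S_{k+1}$, or use the other inducing base as in Proposition~\ref{prop:excursions}.
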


\begin{proof}
This is \cite[Theorem~3]{CL24}.  The absence of a physical measure follows because the empirical measures fail to converge on a set of full Lebesgue measure.
\end{proof}

\begin{remark}
\label{rem:Coates-Gelfert}
Suppose that $\beta>1$ and write
$\nu_p=p\delta_1+(1-p)\delta_{-1}$ for $p\in[0,1]$.  After the
harmless affine normalization of the interval, Theorem~A and
Corollary~2.5 of \cite{CG26}, together with Example~8.3 there, give
\[
  \dim_{\mathrm H}\mathcal B_g(\nu_p)=1
  \qquad(p\in[0,1]).
\]
More generally, for every nonempty closed connected set $C\subset[0,1]$,
\[
  \dim_{\mathrm H}
  \left\{x\in I:
  \Acc\{\mu_n(x):n\ge1\}=\{\nu_p:p\in C\}\right\}=1.
\]
Here the restriction $\beta>1$ is necessary for this citation, since the
common excursion-tail exponent in \cite{CG26} is required to belong to
$(0,1)$, whereas \eqref{eq:balance} gives $\alpha=1/\beta$.  On the other
hand, Theorem~\ref{thm:CL} implies
$m(\mathcal B_g(\nu_p))=0$ for every $p\in[0,1]$.  Thus these ordinary-time
basins are Lebesgue-null but have full Hausdorff dimension.  This comparison
is not used in the proofs below.
\end{remark}

The proof of Theorem~\ref{thm:CL} contains a stronger block statement that is the input needed below.  We record it with the two inducing bases made explicit.

\begin{proposition}
\label{prop:excursions}
There is a set $I_0\subset I$ with $m(I_0)=1$ such that the following holds.  For every $x\in I_0$ and every $\sigma\in\{+,-\}$ there are integers $T_j^\sigma\to\infty$ and $L_j^\sigma\ge1$ satisfying
\begin{equation}
  \frac{L_j^\sigma}{T_j^\sigma}\longrightarrow\infty.
  \label{eq:dom-ratio}
\end{equation}
Moreover, for every sufficiently small $\eps>0$, there is a constant $B_{\eps,\sigma}<\infty$ such that
\begin{equation}
  \#\left\{T_j^\sigma\le r<T_j^\sigma+L_j^\sigma:
       g^r x\notin U_\eps^\sigma\right\}
  \le B_{\eps,\sigma}
  \label{eq:block-good}
\end{equation}
for all $j$.
\end{proposition}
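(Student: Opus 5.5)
The plan is to extract this from the inducing structure behind Theorem~\ref{thm:CL}, working on the left base $\Delta_0^-$ with the Gibbs--Markov first-return map $\widehat g$ and its invariant probability $\widehat\mu$. Each return cycle from $\Delta_0^-$ consists of a bounded ``flight'' portion, an excursion of length $\tau^+$ near $1$, a bounded transition, and an excursion of length $\tau^-$ near $-1$. Write $\tau_i^\pm=\tau^\pm\circ\widehat g^{\,i}$ and let $S_i$ be the ordinary time at which the $i$-th return cycle starts, so that $S_i=\sum_{l<i}(\tau_l^++\tau_l^-+O(1))$.

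\textbf{Step 1 (geometry of one excursion).} Using the local forms \eqref{eq:A1-endpoints}, the orbit of a point entering the $+$ excursion with length $\tau^+$ spends all but a bounded number $B_{\eps,+}$ of those $\tau^+$ steps inside $U_\eps^+$. The iterates of $g_+$ near $1$ move toward $0$ at a rate controlled by $(1-x)^{1+\ell_+}$, so only the last $O_\eps(1)$ steps of the escape lie outside $U_\eps^+$. The bounded flight and transition segments contribute $O(1)$ more; the finite-flight condition (A2) makes that bound uniform. The same holds for $-$. We therefore take $T_j^\sigma$ to be the start time of the $\sigma$-excursion in a suitable cycle $i_j$, and $L_j^\sigma=\tau_{i_j}^\sigma$. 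With these choices \eqref{eq:block-good} holds, with a constant depending only on $\eps$ and $\sigma$.

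\textbf{Step 2 (one excursion dominating the past).} We need infinitely many $i$ with $\tau_i^\sigma/S_i\to\infty$ along the chosen subsequence, for $\widehat\mu$-a.e.\ point. By \eqref{eq:tails}, $\tau^\pm$ are in the domain of attraction of an $\alpha$-stable law with $\alpha\le1$ (relatively stable when $\alpha=1$). Gibbs--Markov maps have exponential $\psi$-mixing on cylinders, and the $\tau_i^\pm$ are cylinder-measurable. This makes the events $E_i=\{\tau_i^\sigma>K_i S_i\}$ nearly independent of the past.

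\begin{itemize}
\item For $\alpha<1$, the classical fact $\limsup_i \max_{l\le i}\tau_l/\sum_{l\le i}\tau_l=1$ for i.i.d.\ heavy tails suggests the route. I would bound $S_i\le i^{1/\alpha}h(i)$ with a slowly growing $h$ on a set of large measure. The tail then gives $\widehat\mu(\tau_i^\sigma>K i^{1/\alpha}h(i))\gtrsim 1/(iK^\alpha h(i)^\alpha)$, which is non-summable for suitable slowly varying $h$ and $K_i\to\infty$ slowly.
\item For $\alpha=1$, $S_i\sim c\, i\log i$ almost surely. One compares with $\sum_i 1/(iK_i\log i)$, which is still divergent when $K_i\to\infty$ slowly enough, e.g.\ $K_i=\log\log i$.
\end{itemize}

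A Borel--Cantelli lemma for $\psi$-mixing sequences (the Rényi/Kochen--Stone version) then gives infinitely many occurrences almost surely. Finally, the $+$ excursion in cycle $i$ is preceded by $S_i+O(1)$ steps and the $-$ excursion by $S_i+\tau_i^+ +O(1)$. So for $\sigma=-$ I intersect with $\{\tau_i^+\le S_i\}$, which has probability bounded below independently of $E_i$ up to mixing errors.

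\textbf{Step 3 (transfer to $I$).} The set of points whose orbit enters $\Delta_0^-$ has full $m$-measure, by the conservativity/recurrence of the induced scheme in \cite{CL24}. A point $x$ first hitting the base at time $t_0(x)$ shifts all $T_j^\sigma$ by $t_0(x)$, which does not affect \eqref{eq:dom-ratio}. Since $\widehat\mu\sim m$ on the base and $g$ is nonsingular, full $\widehat\mu$-measure pulls back to full $m$-measure on $I$. This yields $I_0$.

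I expect Step 2 to be the main obstacle, specifically the second-moment control in the Borel--Cantelli argument when the $E_i$ depend on the random normalizer $S_i$. I would first replace $S_i$ by a deterministic bound valid eventually almost surely: for $\alpha<1$ via the Feller-type upper bound on sums of heavy-tailed $\psi$-mixing variables, and for $\alpha=1$ via truncation and the strong law for trimmed sums. After that replacement, the events depend on single coordinates, and exponential $\psi$-mixing handles the pairwise correlations. For the boundary case $\beta=1$ this is precisely the argument that \cite{CL24} must already contain, and I would follow their Section on the proof of Theorem~3.
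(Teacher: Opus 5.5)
Your Steps 1 and 3 are sound and agree with the paper. The paper takes the bad-iterate count from \cite[eq.~(27)]{CL24} and transfers from the base by a finite first-entrance time.

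The gap is in Step 2, in exactly the reduction you single out in your last paragraph. Replacing $S_i$ by a deterministic bound $b_i$ that holds eventually almost surely defeats itself. If $S_i\le b_i$ for all large $i$, then $\tau_i^\sigma\le S_{i+1}\le b_{i+1}\le K_ib_i$ for all large $i$, for regularly varying $b_i$ and $K_i\to\infty$. Hence the single-coordinate events $\{\tau_i^\sigma>K_ib_i\}$ occur only finitely often. No Borel--Cantelli lemma, with or without mixing, can make their measures non-summable. Concretely, the Feller-type criterion you invoke gives $S_i\le i^{1/\alpha}h(i)$ eventually only when $\sum_i 1/(i\,h(i)^\alpha)<\infty$, and then your series $\sum_i 1/(iK_i^\alpha h(i)^\alpha)$ converges.

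The $\alpha=1$ bullet fails more directly. The asymptotic $S_i\sim c\,i\log i$ holds in probability but not almost surely. It also contradicts the conclusion you want: $\tau_i^\sigma>KS_i$ forces $S_{i+1}>(1+K)S_i$, while an almost-sure asymptotic $S_i\sim c\,i\log i$ forces $S_{i+1}/S_i\to1$. The favorable cycles are precisely those in which $S_i$ has its typical size, far below its almost-sure envelope. So the past can only be controlled in probability.

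Your first bullet's phrase ``on a set of large measure'' points the right way; the repair is to keep that set inside the event. Let $\varphi=\tau^++\tau^-$ be the return time.
\begin{itemize}
\item By stationarity and truncation, $\widehat\mu(S_i>b)\le i\,\widehat\mu(\varphi>b)+ib^{-1}\int_{\{\varphi\le b\}}\varphi\,d\widehat\mu$.
\item Hence $\widehat\mu(S_i>b_i)\to0$ for $b_i=i^{1/\alpha}h(i)$, or $b_i=h(i)\,i\log i$ when $\alpha=1$, with any $h\to\infty$.
\item Choose $h$ and $K_i\to\infty$ so slowly that $p_i=\widehat\mu(\tau^\sigma>K_ib_i)$ satisfies $\sum_ip_i=\infty$, and set $E_i=\{S_i\le b_i\}\cap\{\tau_i^\sigma>K_ib_i\}$.
\item The set $\{S_i\le b_i\}$ is a union of $i$-cylinders, and the full-branch map $G$ has uniformly bounded distortion for all iterates. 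This gives $\widehat\mu(E_i)\ge c\,p_i\,\widehat\mu(S_i\le b_i)$ and $\widehat\mu(E_i\cap E_j)\le Cp_ip_j$ for $i<j$.
\item Kochen--Stone then gives $\widehat\mu(\limsup_iE_i)>0$.
\item The set $\{\limsup_i\tau_i^\sigma/S_i=\infty\}$ is $G$-invariant, so ergodicity upgrades this to full measure.
\end{itemize}

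The paper does not reprove any of this. It takes the full-measure set from \cite[eqs.~(32)--(33), Prop.~5.4]{CL24}. For $\sigma=-$ it uses the symmetric base $\Delta_0^+$, rather than conditioning on $\{\tau_i^+\le S_i\}$ within the same cycle. On your single-base route, that conditioning needs quasi-independence of $\tau^+$ and $\tau^-$ inside one return atom $\delta_{m,n}^-$. That property comes from bounded distortion of the half-return branches $g^m:\delta_m^-\to\Delta_0^+$, not from mixing between cycles.
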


\begin{proof}
We identify the full-measure sets on the two inducing bases and then transfer them to the whole interval.

For the right endpoint, let
\[
  G_-:\Delta_0^-\to\Delta_0^- ,\qquad
  \varphi_-:\Delta_0^-\to\N
\]
be the first-return map and its return time, and put
\[
  S_k^-(y)=\sum_{r=0}^{k-1}\varphi_-(G_-^r y).
\]
Let $\tau_+^-(y)$ be the length of the first, right-going leg of this return cycle.  The argument proving \cite[equations~(32)--(33) and Proposition~5.4]{CL24} gives a set
\[
  E_+^-=
  \left\{y\in\Delta_0^-:
  \limsup_{k\to\infty}
  \frac{\tau_+^-(G_-^k y)}{S_k^-(y)}=\infty\right\}
\]
of full $\widehat\mu_-$-measure, hence of full Lebesgue measure in $\Delta_0^-$ because the Gibbs--Markov invariant density is bounded above and below there.  For $y\in E_+^-$ choose $k_j\to\infty$ so that the displayed quotient tends to infinity and set
\[
  T_j^+(y)=S_{k_j}^-(y)+1,
  \qquad
  L_j^+(y)=\tau_+^-(G_-^{k_j}y).
\]
Then $T_j^+(y)\to\infty$ and, by construction,
\[
  \frac{L_j^+(y)}{T_j^+(y)}
  =\frac{\tau_+^-(G_-^{k_j}y)}{S_{k_j}^-(y)+1}
  \longrightarrow\infty.
\]
The interval of times
$[T_j^+(y),T_j^+(y)+L_j^+(y))$ is exactly the first, right-going
leg of the selected return cycle.  The added $1$ agrees with the convention in
\cite{CL24}, where an excursion is counted from the first iterate after a
return to the base.

The bound outside an endpoint neighborhood is deterministic and uniform in the selected cycle.  In the notation of the proof of \cite[Proposition~4.4]{CL24}, let $N_{\eps,+}$ be the last right-hand level not wholly contained in $U_\eps^+$.  Equation~(27) there says that a complete return cycle based at $u$ contains at least
$\tau_+^-(u)-N_{\eps,+}-1$ visits to $U_\eps^+$.  For sufficiently small
$\eps$, we have $U_\eps^+\subset I_+$, and the $\tau_+^-(u)$ indices
$1\le r\le\varphi_-(u)$ for which $g^ru\in I_+$ are consecutive and form
precisely $[1,1+\tau_+^-(u))$.  Hence all these visits occur in the selected
block.  Consequently
\[
  \#\{T_j^+(y)\le r<T_j^+(y)+L_j^+(y):g^r y\notin U_\eps^+\}
  \le N_{\eps,+}+1,
\]
where the right-hand side depends only on $\eps$ and the endpoint geometry, not on $j$ or on the chosen excursion.

For the left endpoint, use the symmetric first-return construction
$G_+:\Delta_0^+\to\Delta_0^+$ and write
\[
  \varphi_+:\Delta_0^+\to\N,
  \qquad
  S_k^+(y)=\sum_{r=0}^{k-1}\varphi_+(G_+^r y)
\]
for its return time and successive return sums; see
\cite[Remark~2.3 and Section~3.1]{CLM23}.  Applying the proof of
\cite[Section~5]{CL24} with the signs interchanged gives a set
\[
  E_-^+=
  \left\{y\in\Delta_0^+:
  \limsup_{k\to\infty}
  \frac{\tau_-^+(G_+^k y)}{S_k^+(y)}=\infty\right\},
\]
of full $\widehat\mu_+$-measure, hence of full Lebesgue measure in
$\Delta_0^+$.  Here $\tau_-^+$ is the first, left-going leg.  Choose
$k_j\to\infty$ along which the displayed quotient tends to infinity and put
\[
  T_j^-(y)=S_{k_j}^+(y)+1,
  \qquad
  L_j^-(y)=\tau_-^+(G_+^{k_j}y).
\]
The same calculation gives $L_j^-(y)/T_j^-(y)\to\infty$.  If
$N_{\eps,-}$ denotes the sign-reversed left-hand level cutoff, the
sign-reversed excursion estimate gives the uniform bound
$B_{\eps,-}=N_{\eps,-}+1$.

It remains to pass from the bases to almost every point of $I$.  Condition
\textup{(A0)} and the absence of interior fixed points imply
$g_-(x)>x$ on $(-1,0)$ and $g_+(x)<x$ on $(0,1)$.  Consequently every
point outside the endpoints and the countable backward orbit of the
discontinuity enters each of the two bases in finite time: while an orbit
remains on one side it is monotone, and failure to reach the crossing interval
$\Delta_0^\pm$ would force convergence to an interior fixed point.  Thus, for
each sign $\sigma$, almost every $x\in I$ has a finite first entrance time
\[
  e_\sigma(x)=\min\{r\ge0:g^r x\in\Delta_0^{-\sigma}\},
\]
where $\Delta_0^{-\sigma}$ denotes $\Delta_0^-$ for $\sigma=+$ and $\Delta_0^+$ for $\sigma=-$.  Since $g$ is nonsingular and the complement of $E_+^-$ or $E_-^+$ in the corresponding base is null, the set of $x$ whose first entrance lands outside the relevant $E$ is contained in a countable union of null preimages and is therefore null.  For every remaining $x$, put $y=g^{e_\sigma(x)}x$ and translate the blocks by
\[
  T_j^\sigma(x)=e_\sigma(x)+T_j^\sigma(y),
  \qquad
  L_j^\sigma(x)=L_j^\sigma(y).
\]
The fixed entrance time does not affect $L_j^\sigma/T_j^\sigma\to\infty$, and the orbit segment, hence the uniform bad-iterate bound, is unchanged.  Intersecting the two resulting full-measure sets proves the proposition.
\end{proof}

\section{Block-dominating sampling}
\label{sec:block}

For an increasing sampling sequence $\mathbf{a}=(a_k)$ and a real number $t>0$, let
\[
  A(t)=\#\{k\ge0:a_k<t\}.
\]
All limits involving $N$ and $R$ in this section are taken through positive integers.

\begin{definition}
The sequence $\mathbf{a}$ is block-dominating if
\begin{equation}
  \lim_{\substack{R\to\infty\\ R\in\mathbb N}}
  \liminf_{\substack{N\to\infty\\ N\in\mathbb N}}
  \frac{A(RN)}{A(N)}=\infty.
  \label{eq:BD}
\end{equation}
\end{definition}

\begin{maintheorem}
\label{thm:block}
Let $g\in\mathfrak F_*$ and let $\mathbf{a}$ be block-dominating.  Then, for $m$-almost every $x$,
\[
  \delta_1,\delta_{-1}\in
  \Acc\{\mu_n^\mathbf{a}(x):n\ge1\}.
\]
Moreover,
\[
  m(X_\mathbf{a})=1.
\]
\end{maintheorem}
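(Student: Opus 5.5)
The plan is to work on the full-measure set $I_0$ of Proposition~\ref{prop:excursions} and fix $x\in I_0$ and a sign $\sigma$. I will show that $\delta_\sigma$ (meaning $\delta_1$ or $\delta_{-1}$) is an accumulation point of the sampled measures. The natural candidate times are $n_j=A(T_j^\sigma+L_j^\sigma)$, the number of sampling times falling before the end of the $j$-th dominant block. The measure $\mu_{n_j}^{\mathbf a}(x)$ then splits into three groups of samples: those with $a_k<T_j^\sigma$, of which there are $A(T_j^\sigma)$; those with $a_k$ inside the block; and, among the latter, at most $B_{\eps,\sigma}$ samples that land outside $U_\eps^\sigma$, by \eqref{eq:block-good}. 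It follows that
\[
  \mu_{n_j}^{\mathbf a}(x)\bigl(I\setminus U_\eps^\sigma\bigr)
  \le \frac{A(T_j^\sigma)+B_{\eps,\sigma}}{A(T_j^\sigma+L_j^\sigma)} .
\]

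It remains to show that this ratio tends to $0$, and this is where block domination enters. Fix $R\in\N$. Since $L_j^\sigma/T_j^\sigma\to\infty$, we have $T_j^\sigma+L_j^\sigma\ge RT_j^\sigma$ for all large $j$. Because $A$ is nondecreasing, $A(T_j^\sigma+L_j^\sigma)\ge A(RT_j^\sigma)$, and $T_j^\sigma$ is an integer tending to infinity. Therefore
\[
  \liminf_{j\to\infty}\frac{A(T_j^\sigma+L_j^\sigma)}{A(T_j^\sigma)}\ge
  \liminf_{N\to\infty}\frac{A(RN)}{A(N)} .
\]
Letting $R\to\infty$ and using \eqref{eq:BD}, the ratio $A(T_j^\sigma)/A(T_j^\sigma+L_j^\sigma)$ tends to $0$.

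The additive constant $B_{\eps,\sigma}$ also needs to be negligible, which requires $A(T_j^\sigma+L_j^\sigma)\to\infty$. Block domination rules out finite sequences, since for a finite sequence $A$ is eventually constant and the ratio in \eqref{eq:BD} tends to $1$. Hence $A(t)\to\infty$, and the term $B_{\eps,\sigma}/A(T_j^\sigma+L_j^\sigma)$ tends to $0$.

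A minor point is that $A(T_j^\sigma)\ge1$ for large $j$, so the ratio is well defined. Another is that $n_j\to\infty$, which follows from $A(t)\to\infty$; it does not matter whether the $n_j$ are strictly increasing, since only a subsequence tending to infinity is needed.

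Combining these steps, for every small $\eps$ we get $\mu_{n_j}^{\mathbf a}(x)(U_\eps^\sigma)\to1$. This gives weak-star convergence $\mu_{n_j}^{\mathbf a}(x)\weakstar\delta_\sigma$. To see this, take any $\varphi\in C(I)$ and choose $\eps$ so small that $|\varphi-\varphi(\sigma1)|$ is less than a prescribed tolerance on $U_\eps^\sigma$. Then bound the contribution from the complement of $U_\eps^\sigma$ by $2\|\varphi\|_\infty$ times its mass, which tends to $0$. An equivalent route is a diagonal argument over $\eps=1/m$.

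Applying this with both signs gives $\delta_1,\delta_{-1}\in\Acc$ for every $x\in I_0$. Since $\delta_1\neq\delta_{-1}$, the sequence $\mu_n^{\mathbf a}(x)$ has two distinct accumulation points and does not converge, so $I_0\subset X_{\mathbf a}$ and $m(X_{\mathbf a})=1$.

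I expect the only delicate step to be the comparison of $A(T_j^\sigma+L_j^\sigma)$ with $A(RT_j^\sigma)$, which passes from the integer-indexed $\liminf$ in \eqref{eq:BD} to the particular sequence $N=T_j^\sigma$. This works because the $T_j^\sigma$ are integers tending to infinity and $A$ is monotone. The rest is bookkeeping inherited from Proposition~\ref{prop:excursions}.
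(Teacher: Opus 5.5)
Your proof is correct and follows the paper's argument essentially step for step. You use the same sampling indices $A(T_j^\sigma+L_j^\sigma)$, the same split of the sampled times giving the bound $(A(T_j^\sigma)+B_{\eps,\sigma})/A(T_j^\sigma+L_j^\sigma)$, and the same monotonicity comparison with $A(RT_j^\sigma)$ to invoke block domination. The only differences are cosmetic: you phrase the domination step as a comparison of $\liminf$'s rather than fixing a threshold $M$, and you omit the paper's inessential passage to a strictly increasing subsequence of indices.
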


\begin{proof}
Fix $x$ in the full-measure set of Proposition~\ref{prop:excursions}, fix $\sigma\in\{+,-\}$, and abbreviate
\[
  T_j=T_j^\sigma,
  \qquad
  L_j=L_j^\sigma,
  \qquad
  R_j=\frac{T_j+L_j}{T_j}.
\]
Then $R_j\to\infty$.

We first show
\begin{equation}
  \frac{A(T_j)}{A(T_j+L_j)}\longrightarrow0.
  \label{eq:count-ratio}
\end{equation}
Let $M>0$.  By \eqref{eq:BD}, there is a fixed $R>1$ such that
\[
  \frac{A(RN)}{A(N)}>M,
\]
for all sufficiently large $N$.

Hence, for all sufficiently large $j$, we have both $R_j\ge R$ and $T_j$ in this asymptotic range.  By monotonicity of $A$,
\[
  \frac{A(T_j+L_j)}{A(T_j)}
  \ge\frac{A(RT_j)}{A(T_j)}>M.
\]
Since $M$ is arbitrary, \eqref{eq:count-ratio} follows.

Put $q_j=A(T_j+L_j)$.  Since $T_j+L_j\to\infty$ and the sampling sequence is infinite, $q_j\to\infty$.  Passing to a subsequence and relabeling, we may and do assume that the integer sequence $(q_j)$ is strictly increasing.  Among the first $q_j$ sampled times, at most $A(T_j)$ occur before the excursion block.  By \eqref{eq:block-good}, at most $B_{\eps,\sigma}$ sampled times inside the block lie outside $U_\eps^\sigma$.  Therefore
\begin{equation}
  1-\mu_{q_j}^\mathbf{a}(x)(U_\eps^\sigma)
  \le
  \frac{A(T_j)+B_{\eps,\sigma}}{A(T_j+L_j)}
  \longrightarrow0.
  \label{eq:block-mass}
\end{equation}
The same subsequence $q_j$ works for every fixed sufficiently small $\eps$.
Let $e_\sigma=1$ for $\sigma=+$ and $e_\sigma=-1$ for $\sigma=-$.
For every $\phi\in C(I)$, \eqref{eq:block-mass} gives
\[
  \limsup_{j\to\infty}
  \left|\int_I\phi\dd\mu_{q_j}^\mathbf{a}(x)-\phi(e_\sigma)\right|
  \le
  \sup_{y\in U_\eps^\sigma}|\phi(y)-\phi(e_\sigma)|.
\]
The right-hand side tends to zero as $\eps\downarrow0$.  Hence
\[
  \mu_{q_j}^\mathbf{a}(x)\weakstar\delta_1
  \quad\text{when }\sigma=+,
  \qquad
  \mu_{q_j}^\mathbf{a}(x)\weakstar\delta_{-1}
  \quad\text{when }\sigma=-.
\]
Thus the sparse empirical measures have two distinct accumulation points for almost every $x$.
\end{proof}

\begin{corollary}
\label{cor:density}
Suppose
\[
  \underline d(\mathbf{a})=\liminf_{N\to\infty}\frac{A(N)}N>0.
\]
Then $\mathbf{a}$ is block-dominating and $m(X_\mathbf{a})=1$.  In particular,
\[
  m(X_{\mathrm{arith},q})=1
  \qquad\text{for every integer }q\ge1.
\]
\end{corollary}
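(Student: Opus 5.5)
The plan is to verify \eqref{eq:BD} directly from the density assumption, using the trivial upper bound $A(N)\le N$ together with the lower bound supplied by positive lower density, and then to invoke Theorem~\ref{thm:block}.

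Set $d=\underline d(\mathbf{a})>0$. Since $(a_k)$ is a strictly increasing sequence of nonnegative integers, the elements with $a_k<N$ lie in $\{0,1,\ldots,N-1\}$, so $A(N)\le N$ for every $N\in\N$. By the definition of the lower density, for each fixed $R\in\N$ we have
\[
  \liminf_{N\to\infty}\frac{A(RN)}{RN}\ge d,
\]
because $(RN)_{N\ge1}$ is a subsequence of the positive integers. Combining the two bounds gives
\[
  \liminf_{N\to\infty}\frac{A(RN)}{A(N)}
  \ge\liminf_{N\to\infty}\frac{A(RN)}{N}
  =R\liminf_{N\to\infty}\frac{A(RN)}{RN}\ge Rd.
\]
Here $A(N)\ge1$ once $N>a_0$, so the quotient is well defined for large $N$. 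Letting $R\to\infty$ through $\N$ shows that the limit in \eqref{eq:BD} is $+\infty$, so $\mathbf{a}$ is block-dominating. Theorem~\ref{thm:block} then gives $m(X_\mathbf{a})=1$.

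For arithmetic sampling $a_k=qk$ with $q\ge1$, we have $A(N)=\lceil N/q\rceil$, since $qk<N$ exactly when $k<N/q$. Hence $A(N)/N\to 1/q>0$, and the first part yields $m(X_{\mathrm{arith},q})=1$.

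There is no real obstacle here. The only point needing care is that the lower density is a liminf over all $N$, while \eqref{eq:BD} evaluates $A$ along $RN$; restricting a liminf to a subsequence can only increase it, so the lower bound $d$ persists along $(RN)$.
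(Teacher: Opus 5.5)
Your proof is correct and follows essentially the same route as the paper: combine the trivial bound $A(N)\le N$ with the linear lower bound from positive lower density to get $\liminf_{N}A(RN)/A(N)\ge Rd$ (the paper uses $cR$ with any $0<c<d$), let $R\to\infty$, and apply Theorem~\ref{thm:block}. Your explicit count $A(N)=\lceil N/q\rceil$ for arithmetic sampling is a correct way to fill in the ``in particular'' clause.
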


\begin{proof}
Choose $c>0$ such that $A(N)\ge cN$ for all sufficiently large $N$.  Since $a_k$ are distinct nonnegative integers, $A(N)\le N$.  Hence, for fixed $R>1$ and all sufficiently large $N$,
\[
  \frac{A(RN)}{A(N)}\ge cR.
\]
Letting $R\to\infty$ we get \eqref{eq:BD}. Hence, by Theorem~\ref{thm:block} arrive at the required assertion.
\end{proof}

\begin{corollary}
\label{cor:RV}
Suppose that, after changing finitely many terms if necessary,
\[
  a_k\sim k^\rho L(k)
  \qquad(k\to\infty),
\]
where $\rho\in[1,\infty)$ and $L$ is slowly varying, and suppose $a_k$ is strictly increasing.  Then $\mathbf{a}$ is block-dominating and $m(X_\mathbf{a})=1$.
\end{corollary}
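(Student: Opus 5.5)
We reduce Corollary~\ref{cor:RV} to Theorem~\ref{thm:block} by showing that $A$ is itself regularly varying, of index $1/\rho>0$. Then $A(RN)/A(N)\to R^{1/\rho}$ for each fixed $R$, which gives \eqref{eq:BD} directly.

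\emph{Finitely many changes are harmless.} Let $\mathbf a'$ be the modified sequence. Altering finitely many terms changes $A(t)$ by at most a bounded amount $K$ for every $t$. Since $A(N)\to\infty$, the ratios $A(RN)/A(N)$ and $A'(RN)/A'(N)$ have the same $\liminf$ in $N$. So block domination is unaffected. The conclusion $m(X_\mathbf a)=1$ then follows from Theorem~\ref{thm:block} applied to $\mathbf a$ itself, which is strictly increasing by hypothesis. We may therefore assume $a_k\sim k^\rho L(k)$ for all large $k$.

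\emph{Inversion.} Put $f(t)=t^\rho L(t)$, extended to a regularly varying function of index $\rho\ge1$ on $[1,\infty)$. By the standard inversion theorem for regularly varying functions (Bingham--Goldie--Teugels, Theorem~1.5.12), $f$ has an asymptotic inverse $f^{\leftarrow}$ that is regularly varying of index $1/\rho$. Now fix $\eta\in(0,1)$. For large $k$,
\[
(1-\eta)f(k)\le a_k\le(1+\eta)f(k).
\]
Hence $a_k<t$ holds whenever $(1+\eta)f(k)<t$, and $a_k<t$ forces $(1-\eta)f(k)<t$. This gives, up to an additive constant,
\[
f^{\leftarrow}\!\bigl(t/(1+\eta)\bigr)\lesssim A(t)\lesssim f^{\leftarrow}\!\bigl(t/(1-\eta)\bigr).
\]
Here $\lesssim$ hides factors tending to $1$. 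By regular variation of $f^{\leftarrow}$, both bounds are asymptotic to $(1\pm\eta)^{\mp1/\rho}f^{\leftarrow}(t)$. Letting $\eta\downarrow0$ yields $A(t)\sim f^{\leftarrow}(t)$.

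\emph{Conclusion.} Consequently
\[
\liminf_{N\to\infty}\frac{A(RN)}{A(N)}=R^{1/\rho},
\]
and this tends to $\infty$ as $R\to\infty$, because $1/\rho>0$. Thus $\mathbf a$ is block-dominating, and Theorem~\ref{thm:block} gives $m(X_\mathbf a)=1$.

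The main obstacle is the inversion step. One must make sure $f$ can be taken monotone, or at least that the asymptotic inverse exists; this is where $\rho>0$, and here $\rho\ge1$, is used. One must also justify passing from the discrete relation $a_k\sim f(k)$ to the counting function with uniform control. We would first try the sandwich argument above with the cited inversion theorem.

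Alternatively, an elementary route avoids the inverse. Choose $k=A(N)$, so that $a_{k-1}<N\le a_k$. Uniform convergence $L(\lambda k)/L(k)\to1$ on compact $\lambda$-sets then shows $a_{\lfloor ck\rfloor}<RN$ for $c<R^{1/\rho}$. This gives $A(RN)\ge cA(N)$ eventually, which already suffices for \eqref{eq:BD}.
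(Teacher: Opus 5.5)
Your main route is essentially the paper's: both show that $A$ is regularly varying of index $1/\rho$ via the inversion theorem for regularly varying functions, deduce $A(RN)/A(N)\to R^{1/\rho}$, and then let $R\to\infty$ in \eqref{eq:BD}. The paper avoids the monotonicity issue in your $\eta$-sandwich by inverting the piecewise-linear interpolation $\widetilde a$ of the points $(k,a_k)$ rather than $f(t)=t^\rho L(t)$; since $\widetilde a$ is continuous and strictly increasing, it gets $|A(N)-V(N)|\le1$ exactly (if you keep $f$, replace it by $\bar f(x)=\sup_{y\le x}f(y)$, which is asymptotic to $f$ because $\rho>0$), and your elementary lower-bound alternative would also suffice, since \eqref{eq:BD} only needs a lower bound on the $\liminf$.
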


\begin{proof}
After the finite initial modification, extend the strictly increasing
sequence to a continuous, strictly increasing function $\widetilde a$ by
linear interpolation through the points $(k,a_k)$.  The uniform convergence
theorem for regularly varying sequences implies
\[
  \widetilde a(t)\sim t^\rho L(t),
  \qquad t\to\infty,
\]
so $\widetilde{\mathbf{a}}$ is regularly varying with index $\rho$.  Let
\[
  V(y)=\inf\{t:\widetilde a(t)\ge y\}
\]
be its generalized inverse.  By the generalized inverse theorem
\cite[Theorem~1.5.12]{BGT87}, $V$ is regularly varying with index
$1/\rho$.  If $a_k<N\le a_{k+1}$, then, up to the harmless choice of the
initial index, $A(N)=k+1$ while $V(N)\in(k,k+1]$.  Hence
\[
  |A(N)-V(N)|\le 1
\]
for all sufficiently large $N$.  This proves that our strict counting
convention $A(N)=\#\{k:a_k<N\}$ has the same regular variation as $V$.
Thus, for every fixed $R>0$,
\[
  \frac{A(RN)}{A(N)}\longrightarrow R^{1/\rho}.
\]
Again, sending $R\to\infty$ we obtain \eqref{eq:BD}, and Theorem~\ref{thm:block} applies.
\end{proof}

\begin{remark}
Corollary~\ref{cor:RV} includes $a_k=k^d$ for every integer $d\ge1$, and more generally $a_k=\lfloor k^\rho L(k)\rfloor$ whenever the resulting sequence is a strictly increasing sequence of nonnegative integers.
\end{remark}

\begin{remark}
For $d_k=2^k$,
\[
  A(N)=\log_2N+O(1),
\]
so $A(RN)/A(N)\to1$ for each fixed $R>1$.  Hence dyadic sampling is not block-dominating.  This observation alone gives no convergence conclusion.
\end{remark}

\section{Natural measures and annealed sparse convergence}
\label{sec:natural}

We first isolate the one-time statistical property used later.

\begin{definition}
\label{ass:SNM}
We say that $g$ has the \textit{strong natural-measure (SNM)} property if, for every Borel probability measure $\lambda\ll m$,
\begin{equation}
  (g^n)_*\lambda\weakstar\nu_*
  \qquad(n\to\infty),
  \label{eq:SNM}
\end{equation}
where $\nu_*$ is defined in \eqref{eq:nustar}.  The adjective ``strong'' emphasizes that the convergence is direct, without Ces\`aro averaging, and holds for every absolutely continuous initial probability.  In particular, it implies the common Ces\`aro-averaged formulation used for natural measures.
\end{definition}

\subsection{The abstract natural-measure theorem}

For the sake of completeness, we state a general abstract framework from \cite[Sections~2.1--2.2]{CMT26}
before applying it.  Let $X$ be a compact metric space with Borel probability
measure $m$, let $\mathcal B_X$ denote the Borel sigma-algebra, let
$\mathcal P$ be a countable measurable partition of $X$ into
positive-measure sets, and let $f:X\to X$ be nonsingular, meaning that
$m(f^{-1}E)=0$ whenever $m(E)=0$.  The pair
$(f,\mathcal P)$ is a \emph{Markov map} if every restriction $f|_a$,
$a\in\mathcal P$, is a measurable bijection from $a$ onto a union of elements
of $\mathcal P$.  Its separation time and symbolic metric are
\[
  s(x,x')=\inf\{n\ge0:f^nx\text{ and }f^nx'
                    \text{ lie in different atoms of }\mathcal P\},
  \qquad
  d_\theta(x,x')=\theta^{s(x,x')},
\]
where $\theta\in(0,1)$, and it is assumed that $s(x,x')=\infty$ only when
$x=x'$.  The map is topologically mixing in the symbolic topology if, for
every $a,b\in\mathcal P$, there is $N=N(a,b)$ such that
$f^na\cap b\ne\varnothing$ for all $n\ge N$.

Let $Y\subset X$ be a union of atoms of $\mathcal P$.  Define the first-return
time and map
\[
  \tau(y)=\inf\{n\ge1:f^ny\in Y\},
  \qquad
  F(y)=f^{\tau(y)}y.
\]
Under the conservativity assumption below, $\tau<\infty$ for $m$-almost every
$y\in Y$.  We discard the null set of nonreturning points, and all return-map
and return-partition statements are understood modulo this null set and
partition boundaries.
The return partition $\mathcal P_Y$ consists of the nonempty sets
\[
  [a_0,\ldots,a_{n-1};Y]
  =a_0\cap f^{-1}a_1\cap\cdots\cap f^{-(n-1)}a_{n-1}\cap f^{-n}Y,
\]
where $a_0\subset Y$ and $a_1,\ldots,a_{n-1}\subset X\setminus Y$.
The first-return map $F$ is \emph{Gibbs--Markov} when it is Markov with
respect to $\mathcal P_Y$, has finitely many images $Fa$, and there exist
$\theta\in(0,1)$ and $C>0$ such that
\[
  \left|\log JF(y)-\log JF(y')\right|
  \le C d_\theta(y,y')
  \qquad(y,y'\in a,\ a\in\mathcal P_Y),
\]
where, on each injective branch, $JF$ is the forward branch Jacobian defined by
\[
  m(FE)=\int_E JF\,dm
\]
for measurable $E$ in that branch.  This is the reciprocal of
$dm/d(m\circ F)$ used in the Gibbs--Markov bounded-distortion definition of
\cite{CMT26}; the absolute log-distortion conditions are equivalent, and the
transfer formulas below use $1/JF$.  We write
$\mathcal B_\theta(Y)$ for the real-valued functions that are Lipschitz with
respect to the symbolic metric associated to $\mathcal P_Y$.

The standing dynamical assumptions in \cite[Section~2.2]{CMT26} are:
\begin{enumerate}[label=\textup{(S\arabic*)},leftmargin=*,itemsep=2pt]
  \item $f$ is conservative, exact, and Markov.  Conservativity means that
  for every measurable $E$ of positive measure, almost every point of $E$
  returns to $E$ infinitely often.  Exactness means that
  \[
    \bigcap_{n\ge0}f^{-n}\mathcal B_X
  \]
  is trivial modulo $m$; in particular, exactness implies ergodicity.
  \item $F=f^\tau:Y\to Y$ is a Gibbs--Markov first-return map.
  \item Both $f$ and $F$ are topologically mixing in their symbolic
  topologies.
  \item The $f$-invariant absolutely continuous $\sigma$-finite measure
  supplied by \cite[Lemma~2.1]{CMT26} is unique up to scaling, equivalent to
  $m$, and is assumed to satisfy $\mu(X)=\infty$.
\end{enumerate}
Since exactness implies ergodicity, \textup{(S1)--(S2)} meet the hypotheses of
\cite[Lemma~2.1]{CMT26}.  That lemma supplies the invariant measure appearing
in \textup{(S4)} and shows that $0<\mu(Y)<\infty$, while the density of $\mu$
and its reciprocal are bounded and symbolic-Lipschitz on $Y$. Thus the formulation above records the standing assumptions in
\cite[Section~2.2]{CMT26}, including exactness, under which
\cite[Theorem~C]{CMT26} is stated.  Exactness is therefore not an
auxiliary assumption added by the present paper.

We now formulate the four additional hypotheses.  Fix an integer $d\ge1$;
$B_\eps(\xi)$ denotes the open metric ball of radius $\eps$ about $\xi$.

\begin{enumerate}[label=\textup{(H\arabic*)},leftmargin=*,itemsep=6pt]
  \item There are fixed points $\xi_1,\ldots,\xi_d\in X$ such that, for every
  $\eps>0$,
  \begin{equation}
    \mu\!\left(X\setminus\bigcup_{k=1}^dB_\eps(\xi_k)\right)<\infty.
    \label{eq:H1}
  \end{equation}

  \item There are $\alpha\in(0,1]$, constants $c_1,\ldots,c_d>0$, and a
  measurable partition $X_1,\ldots,X_d$ of $X\setminus Y$ such that:
  \begin{enumerate}[label=\textup{(\alph*)},leftmargin=2.2em,itemsep=2pt]
    \item $\xi_k\in\operatorname{Int}X_k$ for $k=1,\ldots,d$;
    \item an orbit cannot pass from $X_k$ to $X_\ell$, $k\ne\ell$, without
    first entering $Y$.  Equivalently, if
    \begin{equation}
      \tau^{(k)}(y)
      =\#\{1\le j\le\tau(y):f^jy\in X_k\},
      \label{eq:component-occupation}
    \end{equation}
    then at most one of the numbers $\tau^{(1)}(y),\ldots,\tau^{(d)}(y)$
    is nonzero and
    \begin{equation}
      \tau(y)=1+\sum_{k=1}^d\tau^{(k)}(y).
      \label{eq:H2-separation}
    \end{equation}
    In particular, for $n\ge2$,
    $\{\tau=n\}=\bigcup_{k=1}^d\{\tau^{(k)}=n-1\}$, while
    $\{\tau=1\}=\bigcap_{k=1}^d\{\tau^{(k)}=0\}$.  This separate
    $n=1$ clause removes the zero-occupation ambiguity in the shorthand union
    appearing in the statement of \textup{(H2)(b)} in \cite{CMT26};
    \item for $k=1,\ldots,d$,
    \begin{equation}
      \mu(\tau^{(k)}>n)\sim c_kn^{-\alpha}.
      \label{eq:H2-tail}
    \end{equation}
  \end{enumerate}

  \item To state this condition, for $r\ge0$ let $\mathcal P_r$ be the
  partition into nonempty $(r+1)$-cylinders
  \[
    [a_0,\ldots,a_r]=\bigcap_{j=0}^r f^{-j}a_j,
    \qquad a_j\in\mathcal P,
  \]
  and let $\mathcal P_r^*$ consist of those cylinders for which
  $a_0,\ldots,a_{r-1}\subset X\setminus Y$ and $a_r\subset Y$.
  For $\rho\in L^1(X,m)$ define
  \begin{equation}
    Q_r^\rho
    =\sum_{a\in\mathcal P_r^*}
       1_{f^ra}\left(\frac{\rho}{Jf^r}\right)
       \circ(f^r|_a)^{-1},
    \qquad
    Jf^r=\frac{d(m\circ f^r)}{dm}
    \quad\text{on }a.
    \label{eq:def-Qrho}
  \end{equation}
  There exist $\theta\in(0,1)$ and a dense set
  $\mathcal K(X)\subset L^1(X,m)$ such that
  \begin{equation}
    Q_r^\rho\in\mathcal B_\theta(Y)
    \qquad(\rho\in\mathcal K(X),\ r\ge0).
    \label{eq:H3}
  \end{equation}

  \item If $\alpha\in(0,\tfrac12]$, then
  \begin{equation}
    \mu(\tau=n)=O(n^{-1-\alpha}).
    \label{eq:H4}
  \end{equation}
\end{enumerate}

Under \textup{(H2)} put
\begin{equation}
  c_\tau=\sum_{k=1}^d c_k,
  \qquad
  \bar p_k=\frac{c_k}{c_\tau},
  \qquad
  \nu_{\bar p}=\sum_{k=1}^d\bar p_k\delta_{\xi_k}.
  \label{eq:CMT-limit-measure}
\end{equation}

\begin{theorem}\cite[Theorem C]{CMT26}
\label{thm:CMT-C}
Suppose that \textup{(S1)--(S4)} and hypotheses
\textup{(H1)--(H4)} hold with $\alpha\in(0,1]$.  Then
\[
  (f^n)_*\lambda\weakstar\nu_{\bar p}
  \qquad(n\to\infty)
\]
for every Borel probability measure $\lambda\ll m$.
\end{theorem}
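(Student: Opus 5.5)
The plan is to reduce the weak-star limit to a renewal computation on the inducing set $Y$, with the endpoint weights read off from the excursion tails in (H2). By density of $\mathcal K(X)$ in $L^1(X,m)$, and since $\|(f^n)_*\lambda_1-(f^n)_*\lambda_2\|_{TV}\le\|\rho_1-\rho_2\|_{L^1}$, it suffices to treat $d\lambda=\rho\,dm$ with $\rho\in\mathcal K(X)$, $\rho\ge0$, $\int\rho\,dm=1$. Let $\phi\in C(X)$. Every point of $X$ whose orbit has visited $Y$ before time $n$ has a last such visit. I would decompose $\int\phi\circ f^n\,\rho\,dm$ according to three quantities:
\begin{itemize}[leftmargin=*,itemsep=2pt]
  \item $r$, the first entrance time to $Y$, which contributes the density $Q_r^\rho$ on $Y$;
  \item $j$, the time between the first and the last visit to $Y$, which contributes the return transfer operator $T_j=\sum_{m\ge1}\sum_{\tau_1+\cdots+\tau_m=j}R_{\tau_1}\cdots R_{\tau_m}$ (with $T_0=\mathrm{Id}$), where $R_t$ is the transfer operator of $F$ restricted to $\{\tau=t\}$;
  \item $\ell=n-r-j$, the length of the final incomplete excursion. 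By (H2)(b) this excursion stays in a single component $X_k$.
\end{itemize}
This yields the exact identity
\[
\int\phi\circ f^n\rho\,dm=o(1)+\sum_{k=1}^d\sum_{r+j+\ell=n}\int_Y (T_jQ_r^\rho)\cdot\bigl(\phi\circ f^\ell\bigr)1_{\{\tau^{(k)}>\ell-1\}}\,dm ,
\]
where the $o(1)$ term is the mass of points not yet in $Y$ by time $n$; it tends to zero by conservativity and (S1).

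Next I would localize at the fixed points. Fix $\eps>0$. By (H1), the excursion points outside $\bigcup_k B_\eps(\xi_k)$ carry finite $\mu$-mass. Since $\mu(X)=\infty$, the operators $T_j$ decay in $L^\infty$, so the total contribution of these points tends to $0$. On the remaining points, $\phi\circ f^\ell$ lies within $\omega_\phi(\eps)$ of $\phi(\xi_k)$, where $\omega_\phi$ is the modulus of continuity. Hence the main term becomes
\[
\sum_k\phi(\xi_k)\sum_{r}\sum_{j+\ell=n-r}\int_Y T_jQ_r^\rho\, 1_{\{\tau^{(k)}\ge\ell\}}\,dm .
\]

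The third step, and the main obstacle, is an operator strong renewal theorem. For $v\in\mathcal B_\theta(Y)$ I need
\[
T_jv=\frac{\sin\pi\alpha}{\pi\, c_\tau\,\mu(Y)}\,j^{\alpha-1}\Bigl(\int_Y v\,dm\Bigr)h+o\bigl(j^{\alpha-1}\|v\|_\theta\bigr),
\]
with a slowly varying correction when $\alpha=1$, and $h$ the $F$-invariant density. The tools are:
\begin{itemize}[leftmargin=*,itemsep=2pt]
  \item the Gibbs--Markov spectral gap from (S2) and the aperiodicity from (S3), for the operator-valued generating function;
  \item Karamata/Garsia--Lamperti arguments for $\alpha>\tfrac12$, which use the regular tail $\mu(\tau>n)\sim c_\tau n^{-\alpha}$ obtained from (H2)(c);
  \item the local bound (H4), $\mu(\tau=n)=O(n^{-1-\alpha})$, which is exactly what Gouëzel and Melbourne--Terhesiu require to obtain the strong renewal theorem for $\alpha\le\tfrac12$.
\end{itemize}

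Given this asymptotic, I would finish as follows. Assumption (H3) makes each $Q_r^\rho$ symbolic-Lipschitz, and $\sum_r\int Q_r^\rho\,dm\le1$, so I can sum over $r$ by dominated convergence after first truncating to $r\le R$. The remaining convolution is
\[
\sum_{j+\ell=n}j^{\alpha-1}\,\mu(\tau^{(k)}\ge\ell),\qquad \mu(\tau^{(k)}\ge\ell)\sim c_k\ell^{-\alpha}.
\]
It converges to $c_k\Gamma(\alpha)\Gamma(1-\alpha)=c_k\pi/\sin\pi\alpha$. This cancels the renewal constant and leaves the weight $c_k/c_\tau=\bar p_k$ times $\int\rho\,dm=1$. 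The same computation with $\phi\equiv1$ checks that no mass escapes. Finally, letting $\eps\downarrow0$ gives $\int\phi\,d(f^n)_*\lambda\to\sum_k\bar p_k\phi(\xi_k)$, which is the claimed convergence to $\nu_{\bar p}$.

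The delicate points in this step are threefold. First, the renewal asymptotic must hold uniformly for $v$ in bounded sets of $\mathcal B_\theta(Y)$. Second, the boundary terms with $j$ or $\ell$ small must be negligible; this uses $j^{\alpha-1}\to0$ and the tail bound. Third, the case $\alpha=1$ must be handled with slowly varying normalizations, where the convolution limit is checked directly.
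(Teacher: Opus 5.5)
Theorem~\ref{thm:CMT-C} has no proof in this paper. It is quoted from \cite[Theorem~C]{CMT26}, and the paper only checks its hypotheses, so there is no internal argument to compare yours against. Your sketch is a correct outline of the operator-renewal proof that those hypotheses are built to feed:
\begin{enumerate}
\item (H3) is exactly what makes each first-entrance density $Q_r^\rho$ a valid input for the renewal asymptotics.
\item (H4) is the Gou\"ezel/Melbourne--Terhesiu local condition needed when $\alpha\le\frac12$.
\item Truncating to $r\le R$ correctly avoids any need for bounds on $\|Q_r^\rho\|_\theta$ that are uniform in $r$.
\item $\Gamma(\alpha)\Gamma(1-\alpha)=\pi/\sin\pi\alpha$ does cancel the Garsia--Lamperti constant. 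At $\alpha=1$, the convolution of $1/(c_\tau\log j)$ against $c_k/\ell$ also tends to $c_k/c_\tau$, as you anticipate.
\end{enumerate}
The paper itself carries out the same first-entrance/renewal/last-visit bookkeeping by hand for $g_1$ in Appendix~\ref{app:g1-dyadic}.

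A few details need repair; none is structural.
\begin{enumerate}
\item In your displayed identity, the $\ell=0$ term (orbit in $Y$ at time $n$) is counted once for each $k$, because $1_{\{\tau^{(k)}>-1\}}\equiv1$ on $Y$. Split it off; it is bounded by $\|T_{n-r}\|\to0$.
\item In the convolution with $\mu(\tau^{(k)}\ge\ell)$ you are implicitly taking $h=d\mu/dm$ on $Y$. With that $h$, the renewal constant is $\sin(\pi\alpha)/(\pi c_\tau)$, with no factor $\mu(Y)$. As written you obtain $c_k/(c_\tau\mu(Y))$, which fails your own $\phi\equiv1$ mass check unless $\mu(Y)=1$. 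This is harmless: $\bar p_k$ is unchanged when $\mu$ is rescaled, so normalize $\mu(Y)=1$.
\item The decay of $T_j$ used in your localization step does not follow from $\mu(X)=\infty$ alone. It comes from the renewal theorem in your third step, which also uses aperiodicity.
\item Since $\mathcal K(X)$ is only assumed dense, run the argument for signed $\rho\in\mathcal K(X)$, with limit $(\int\rho\,dm)\int\phi\,d\nu_{\bar p}$. Your argument is linear, so nothing else changes.
\end{enumerate}

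For the localization you can in fact bypass renewal theory entirely. Write $d\lambda=\rho\,d\mu$ and let $L_f$ be the transfer operator of $f$ with respect to $\mu$. Exactness in (S1) gives, by Lin's theorem, $\|L_f^n(\rho-\rho')\|_{L^1(\mu)}\to0$ whenever $\int\rho\,d\mu=\int\rho'\,d\mu$. Take $\rho'=1_B/\mu(B)$ with $\mu(B)$ finite but large. Then $\int_E L_f^n\rho'\,d\mu=\mu(B\cap f^{-n}E)/\mu(B)\le\mu(E)/\mu(B)$. This gives $(f^n)_*\lambda(E)\to0$ whenever $\mu(E)<\infty$. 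Combined with (H1), every limit point of $(f^n)_*\lambda$ is then supported on $\{\xi_1,\dots,\xi_d\}$. The renewal computation is needed only to identify the weights $\bar p_k$.
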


\subsection{The comparison class used in the application theorem}

The concrete application in \cite[Section~4.2]{CMT26} uses a class that is
slightly more general at the neutral endpoints than the class in
\cite{CLM23}.  To avoid conflating it with $\mathfrak F_*$, denote it here by
$\mathcal F_{\mathrm{CMT}}$.  A map $f:I\to I$ belongs to
$\mathcal F_{\mathrm{CMT}}$ when it satisfies the following conditions.

\medskip
\noindent\textbf{(F0) Two full branches.}
For some $c\in(-1,1)$, the restrictions
\[
  f_-:(-1,c)\to(-1,1),
  \qquad
  f_+:(c,1)\to(-1,1)
\]
are orientation-preserving $C^2$ diffeomorphisms without fixed points.  We
use the normalization $c=0$.

\medskip
\noindent\textbf{(F1) Endpoint and discontinuity asymptotics.}
There are $\ell_\pm,k_\pm,a_\pm,b_\pm>0$ and one-sided neighborhoods
$U_{-1},U_{0-},U_{0+},U_{+1}$ such that, when $k_\pm\ne1$,
\begin{equation}
  f(x)=
  \begin{cases}
    x+b_-(1+x)^{1+\ell_-}+o((1+x)^{1+\ell_-}),&x\in U_{-1},\\
    1-a_-|x|^{k_-},&x\in U_{0-},\\
    -1+a_+x^{k_+},&x\in U_{0+},\\
    x-b_+(1-x)^{1+\ell_+}+o((1-x)^{1+\ell_+}),&x\in U_{+1}.
  \end{cases}
  \label{eq:F1}
\end{equation}
At $0$, the second and third lines of \eqref{eq:F1} refer to the one-sided
extensions $f_-$ and $f_+$, respectively; no condition is imposed on the
single assigned value $f(0)$.  If $k_-=1$ or $k_+=1$, the corresponding
discontinuity formula is replaced
by $f'(0-)=a_->1$ or $f'(0+)=a_+>1$, respectively.  The neighborhoods may,
and will, be shrunk so that $f(U_{0\pm})\subset U_{\mp1}$.

Given \textup{(F0)}, choose a cross-branch orbit of prime period two and denote its
points by $\gamma_-<0<\gamma_+$, so that
$f(\gamma_-)=\gamma_+$ and $f(\gamma_+)=\gamma_-$.  Set
\begin{equation}
  Y=[\gamma_-,\gamma_+],
  \qquad
  X_{\pm,n}=f_\pm^{-n}Y,
  \qquad
  Y_{\pm,n+1}=f_\pm^{-1}X_{\mp,n}
  \quad(n\ge1).
  \label{eq:CMT-partition}
\end{equation}
Then $\{X_{\pm,n}:n\ge1\}$ partitions $I\setminus Y$ and
$\{Y_{\pm,n}:n\ge2\}$ partitions $Y$, modulo endpoints; together they form
the Markov partition used by the application theorem.

\medskip
\noindent\textbf{(F2) Convexity, distortion, and finite-flight expansion.}
The map $f$ is convex on $U_{-1}$ and concave on $U_{+1}$.  In addition:
\begin{enumerate}[label=\textup{(\arabic*)},leftmargin=2.2em,itemsep=2pt]
  \item if the corresponding endpoint branch does not extend as a $C^2$
  map to the endpoint, then
  \[
    f''(x)\lesssim(1+x)^{\ell_--1}\quad(x\in U_{-1}),
    \qquad
    |f''(x)|\lesssim(1-x)^{\ell_+-1}\quad(x\in U_{+1});
  \]
  \item there is a common constant $\lambda_F>1$ such that, for each sign
  with $k_\pm\ne1$,
  \begin{equation}
    (f^n)'(x)>\lambda_F
    \quad\text{for all }x\in Y_{\pm,n}
    \text{ whenever }Y_{\pm,n}\not\subset U_{0\pm}.
    \label{eq:F2-expansion}
  \end{equation}
\end{enumerate}

\begin{lemma}
\label{lem:class-compatibility}
Every $g\in\mathfrak F_*$ belongs to $\mathcal F_{\mathrm{CMT}}$.  Under the
notation fixed above,
\[
  \alpha_+=\frac1{\ell_+k_-}=\frac1{\beta^+},
  \qquad
  \alpha_-=\frac1{\ell_-k_+}=\frac1{\beta^-}.
\]
In particular, balance is equivalent to $\alpha_+=\alpha_-=\alpha$.
\end{lemma}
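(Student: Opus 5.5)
The plan is to check (F0), (F1) and (F2) one at a time against (A0)--(A2), then read off the exponents.

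\emph{(F0) and (F1).} Condition (F0) is (A0) with $c=0$. Indeed, $g_\pm$ are orientation-preserving $C^2$ diffeomorphisms onto $(-1,1)$. Because the only fixed points of $g$ are $\pm1$, neither open branch has a fixed point.

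Condition (F1) follows from (A1). The endpoint formulas \eqref{eq:A1-endpoints} are exact, so the $o(\cdot)$ remainders vanish. The discontinuity formulas \eqref{eq:A1-critical}, or the derivative clause when $k_\pm=1$, coincide with those in \eqref{eq:F1}. After shrinking the neighborhoods, $U_{\mp1}=g_\pm(U_{0\pm})$ gives $f(U_{0\pm})\subset U_{\mp1}$.

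\emph{(F2): convexity and (F2)(1).} From \eqref{eq:A1-endpoints},
\[
  g''(x)=b_-\ell_-(1+\ell_-)(1+x)^{\ell_--1}>0 \quad\text{on } U_{-1},
\]
and symmetrically $g''<0$ on $U_{+1}$. This gives convexity on $U_{-1}$ and concavity on $U_{+1}$, and the bounds in (F2)(1) hold with equality up to constants. Recall that $\ell_\pm>0$ in $\mathfrak F_*$.

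\emph{(F2)(2), the main obstacle.} This is the finite-flight expansion \eqref{eq:F2-expansion} for the partition built from the period-two orbit $\gamma_\pm$. It has to be deduced from \eqref{eq:A2-expansion}, which is phrased with the sets $\delta_n^\pm$ and $\Delta_n^\pm$ of \cite{CLM23}. I would do it in three steps.

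First, I would identify the two partitions exactly, including the index shifts. I expect $Y=[\gamma_-,\gamma_+]$ to correspond to $\Delta_0^-\cup\Delta_0^+$ up to the choice of the crossing orbit. Moreover, $Y_{\pm,n}$ should be the $\delta_{n'}^\pm$-type sets or a finite union of pieces of them, with $n'$ differing from $n$ by a fixed shift of one or two. To check this I would compute $g_\pm^{-1}$ of the level sets $X_{\mp,n}=g_\mp^{-n}Y$ and compare them with $\Delta_{n-1}^\mp$.

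Second, since only finitely many $Y_{\pm,n}$ fail to lie in $U_{0\pm}$, the hypothesis of (F2)(2) concerns only $n$ up to some $N_\pm$. On these sets, \eqref{eq:A2-expansion} bounds $(g^{n'})'$ from below by $\lambda_A$. Where the shifted index leaves one or two extra iterates, those iterates lie in the endpoint regions, where $g'\ge1$. The reason is that $g'=1+b(1+\ell)(\cdots)^{\ell}\ge1$ there, and the full-return expansion of \cite{CLM23} covers the remaining passage through the complement.

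Third, I would take $\lambda_F$ to be the minimum over the finitely many relevant cases, which is still $>1$. If the identification is not exact, the backup is the uniform expansion of the first-return map of \cite{CLM23}, which holds on all of its return partition. One then compares the two first-return maps on $Y$ by bounded distortion.

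\emph{Exponents.} This part is bookkeeping. The excursion toward $+1$ starts by entering $U_{+1}$ through $g_-(U_{0-})$, where $1-g_-(x)\asymp|x|^{k_-}$. Near $+1$, the time to leave a $\delta$-neighborhood scales like $\delta^{-\ell_+}$. Hence Lebesgue-measure tails of entry depth compose to
\[
  \Leb(\tau^+>n)\asymp n^{-1/(\ell_+k_-)},
\]
so $\alpha_+=1/\beta^+$. Symmetrically, $\alpha_-=1/(\ell_-k_+)=1/\beta^-$. Balance $\beta^-=\beta^+$ is then equivalent to $\alpha_+=\alpha_-$, and the common value is $\alpha=1/\beta$ by \eqref{eq:balance}. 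In matching these exponents with the conventions of \cite{CMT26}, the only real check is the orientation: the excursion to $+1$ is fed by the left branch near $0$, which is why $k_-$ pairs with $\ell_+$.
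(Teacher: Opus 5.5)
Your treatment of (F0), (F1), the convexity clauses, (F2)(1) and the exponents is fine. The paper simply reads $\alpha_\pm$ off from \eqref{eq:stickiness}, and your tail heuristic is consistent with that. The gap is in (F2)(2), which is the only substantive part of the lemma.

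First, $Y$ does not correspond to $\Delta_0^-\cup\Delta_0^+$. Since $g(\gamma_-)=\gamma_+\in(0,1)$, the point $\gamma_-$ lies inside $\Delta_0^-=g_-^{-1}((0,1))$, and symmetrically for $\gamma_+$. So $Y$ is strictly smaller, and the correct relation is $Y_\pm=Y\cap\Delta_0^\pm$. Because $g^n$ maps $Y_{-,n}$ onto all of $Y$, which straddles $0$, each atom splits according to the side on which $g^nx$ lands:
\[
  Y_{-,n}=\bigl(Y_{-,n}\cap\delta_n^-\bigr)\mathbin{\dot\cup}\bigl(Y_{-,n}\cap\delta_{n-1}^-\bigr).
\]
On the first piece $g^nx\in Y_+$; on the second $g^nx\in Y_-$.

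On the first piece, \eqref{eq:A2-expansion} applies once you know $n\le n_-$. That bound comes from two facts: the $\delta_r^-$ are ordered toward $0$, so $\delta_r^-\subset U_{0-}$ for $r\ge n_-$; and $Y_{-,n}\not\subset U_{0-}$. Your ``finitely many $n$ up to some $N_\pm$'' does not supply it.

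The real failure is on the second piece. There the index shift leaves exactly one extra iterate, and it is not an endpoint step as you claim. It is $g$ applied at $z=g^{n-1}x\in\Delta_0^+$ with $gz\in Y_-$, i.e.\ the crossing from the right half back to the left half. Conditions (A0)--(A2) give no one-step bound $g'(z)\ge1$ there; only composite derivatives are controlled. So
\[
  (g^n)'(x)=(g^{n-1})'(x)\,g'(z)>\lambda_A\,g'(z)
\]
does not yield a constant above $1$. Your fallback does not help either: bounded distortion controls ratios of derivatives and cannot produce $\lambda_F>1$.

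The missing observation is that $gz\in Y_-\subset\Delta_0^-$ means $z\in\delta_1^+$. Hence
\[
  x\in\delta_{n-1}^-\cap g^{-(n-1)}\delta_1^+=\delta_{n-1,1}^-,
\]
which is a full-return branch of the CLM map $G_-$ with return time exactly $n$. So $g^n=G_-$ on this piece, and the full-return expansion \cite[Proposition~3.6]{CLM23} gives $(g^n)'>\lambda_R$ directly. Running the sign-interchanged argument for $Y_{+,n}$, one takes $\lambda_F=\min\{\lambda_A,\lambda_R\}$.

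Your remark that the full-return expansion ``covers the remaining passage'' points in this direction. As written, however, the argument rests on the incorrect claim that the leftover iterates lie in the endpoint regions.
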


\begin{proof}
Condition \textup{(A0)} is \textup{(F0)} with $c=0$.  The exact endpoint
identities in \eqref{eq:A1-endpoints} imply the endpoint asymptotics in
\textup{(F1)} with zero remainder, and the discontinuity clauses and the
$k_\pm=1$ conventions agree.  Differentiating
\eqref{eq:A1-endpoints} gives convexity on the left, concavity on the right,
and the bounds in \textup{(F2)(1)}.

We prove \textup{(F2)(2)} by comparing the two partitions directly.
The CLM intervals are defined by
\begin{equation}
  \Delta_r^\pm=g_\pm^{-r}(\Delta_0^\pm),
  \qquad
  \delta_r^-=g_-^{-1}(\Delta_{r-1}^+)\cap\Delta_0^-,
  \qquad
  \delta_r^+=g_+^{-1}(\Delta_{r-1}^-)\cap\Delta_0^+
  \quad(r\ge1),
  \label{eq:CLM-cylinder-definitions}
\end{equation}
with the inverse iterates taken along the indicated one-sided branch; this is
\cite[equations~(4)--(6)]{CLM23}.  Hence
\begin{equation}
  g^r:\delta_r^-\longrightarrow\Delta_0^+,
  \qquad
  g^r:\delta_r^+\longrightarrow\Delta_0^-
  \label{eq:CLM-half-return-images}
\end{equation}
bijectively.  For $m,n\ge1$, set
\begin{equation}
  \delta_{m,n}^-
    =\delta_m^-\cap g^{-m}(\delta_n^+),
  \qquad
  \delta_{m,n}^+
    =\delta_m^+\cap g^{-m}(\delta_n^-).
  \label{eq:CLM-full-return-branches}
\end{equation}
These are the branches of the full first-return maps $G_-$ and $G_+$, with
return time $m+n$.  Proposition~3.6 of \cite{CLM23}, together with its
sign-interchanged version, supplies $\lambda_R>1$ such that
\begin{equation}
  (g^{m+n})'(x)>\lambda_R
  \qquad
  (x\in\delta_{m,n}^\pm,\ m,n\ge1).
  \label{eq:full-return-expansion}
\end{equation}
Put $Y_-=Y\cap(-1,0)$ and $Y_+=Y\cap(0,1)$.  Since the branches are
increasing and $g(\gamma_-)=\gamma_+$, $g(\gamma_+)=\gamma_-$, the definitions
of $\Delta_0^\pm$ give, modulo endpoints,
\begin{equation}
  Y_-=Y\cap\Delta_0^-,
  \qquad
  Y_+=Y\cap\Delta_0^+.
  \label{eq:Y-halves-Delta0}
\end{equation}
For every $n\ge2$, the CMT definition
$Y_{-,n}=g_-^{-1}X_{+,n-1}$ implies that, for $x\in Y_{-,n}$,
\begin{equation}
  g^j x\in X_{+,n-j}\quad(1\le j\le n-1),
  \qquad g^n x\in Y.
  \label{eq:CMT-minus-itinerary}
\end{equation}
If $g^n x\in Y_+\subset\Delta_0^+$, then the recursive definition of the
$\Delta_r^+$ gives $gx\in\Delta_{n-1}^+$, and hence
$x\in\delta_n^-$.  Conversely, if $x\in Y_{-,n}\cap\delta_n^-$, then
\eqref{eq:CLM-half-return-images} and $g^n x\in Y$ give
$g^n x\in Y\cap\Delta_0^+=Y_+$.  Thus
\begin{equation}
  Y_{-,n}\cap g^{-n}Y_+
  =Y_{-,n}\cap\delta_n^-.
  \label{eq:cylinder-comparison-minus-a}
\end{equation}
If instead $g^n x\in Y_-$, then $g^{n-1}x\in\Delta_0^+$ and
\eqref{eq:CMT-minus-itinerary} gives $gx\in\Delta_{n-2}^+$; hence
$x\in\delta_{n-1}^-$.  Conversely, if
$x\in Y_{-,n}\cap\delta_{n-1}^-$, then
$g^{n-1}x\in\Delta_0^+$, so $g^n x\in(-1,0)$; together with
$g^n x\in Y$, this yields $g^n x\in Y_-$.  Therefore
\begin{equation}
  Y_{-,n}\cap g^{-n}Y_-
  =Y_{-,n}\cap\delta_{n-1}^-.
  \label{eq:cylinder-comparison-minus-b}
\end{equation}
The same argument with the signs interchanged gives
\begin{align}
  Y_{+,n}\cap g^{-n}Y_-
    &=Y_{+,n}\cap\delta_n^+,
    \label{eq:cylinder-comparison-plus-a}\\
  Y_{+,n}\cap g^{-n}Y_+
    &=Y_{+,n}\cap\delta_{n-1}^+.
    \label{eq:cylinder-comparison-plus-b}
\end{align}
Since $g^n(Y_{\pm,n})=Y_-\mathbin{\dot\cup}Y_+$ modulo endpoints,
Equations~\eqref{eq:cylinder-comparison-minus-a}--
\eqref{eq:cylinder-comparison-plus-b} give the disjoint decompositions
\begin{align}
  Y_{-,n}
    &=\bigl(Y_{-,n}\cap\delta_n^-\bigr)
      \mathbin{\dot\cup}
      \bigl(Y_{-,n}\cap\delta_{n-1}^-\bigr),
      \label{eq:whole-cylinder-comparison-minus}\\
  Y_{+,n}
    &=\bigl(Y_{+,n}\cap\delta_n^+\bigr)
      \mathbin{\dot\cup}
      \bigl(Y_{+,n}\cap\delta_{n-1}^+\bigr),
      \label{eq:whole-cylinder-comparison-plus}
\end{align}
modulo endpoints.  Thus every CMT return atom, not merely a subfamily of
them, is covered by one of the CLM half-return cases, with the index shift
made explicit in both directions.

It remains to verify \textup{(F2)(2)} on precisely the atoms for which that
condition is required.  Fix a sign with $k_\pm\ne1$ and an atom
$Y_{\pm,n}\not\subset U_{0\pm}$.  Since
$Y_{\pm,n}\subset Y_\pm\subset\Delta_0^\pm$ modulo endpoints, the definition
of $n_\pm$ shows that $n_\pm\ge1$: if $n_\pm=0$, then
$\Delta_0^\pm\subset U_{0\pm}$ and every such atom is contained in
$U_{0\pm}$.  Moreover, the intervals $\delta_r^\pm$ are ordered towards
$0$, so $\delta_r^\pm\subset U_{0\pm}$ for every $r\ge n_\pm$.  By the
decompositions above, $Y_{\pm,n}\not\subset U_{0\pm}$ therefore implies
$n\le n_\pm$.

Consider first $Y_{-,n}$.  On the component
$Y_{-,n}\cap\delta_n^-$, condition \textup{(A2)} gives
$(g^n)'(x)>\lambda_A$.  On $Y_{-,n}\cap\delta_{n-1}^-$, put
$z=g^{n-1}x$.  Then $z\in\Delta_0^+$ and
$gz=g^nx\in Y_-\subset\Delta_0^-$, so
$z\in\delta_1^+$ and consequently
\[
  x\in\delta_{n-1,1}^-.
\]
Thus $g^n$ on this component is a branch of the full first-return map $G_-$,
and \eqref{eq:full-return-expansion} gives $(g^n)'(x)>\lambda_R$.  The two
components of $Y_{+,n}$ are treated identically with the signs interchanged.
Taking $\lambda_F=\min\{\lambda_A,\lambda_R\}>1$ gives
\eqref{eq:F2-expansion}.  Finally, the formulas for
$\alpha_\pm$ follow directly from \eqref{eq:stickiness}.
\end{proof}

\begin{remark}
\label{rem:endpoint-conventions}
The following tables fix the signs by the dynamics rather than by typography
in the cited formulas:
\[
\begin{array}{c|c|c|c|c}
\text{CMT return atom}&\text{initial branch}&\text{endpoint visited}
 &\text{crossing coefficient}&\text{endpoint data}\\ \hline
Y_{-,n}&g_-&+1&a_-&(\ell_+,b_+)\\
Y_{+,n}&g_+&-1&a_+&(\ell_-,b_-)
\end{array}
\]
For the balanced exponent $\alpha$, set
\[
  B_+=a_-^{-1/k_-}(\ell_+b_+)^{-\alpha},
  \qquad
  B_-=a_+^{-1/k_+}(\ell_-b_-)^{-\alpha}.
\]
All set identities in the following table are understood modulo partition
endpoints.  Then, for $n\ge1$,
\[
\begin{array}{c|c|c|c}
\text{endpoint }\sigma&\text{occupation level}&\text{CLM tail constant}
 &\text{CMT tail constant}\\ \hline
+&\{\tau^{(+)}=n\}=Y_{-,n+1}&C_+=h(0-)B_+&c_+=h(0)B_+\\
-&\{\tau^{(-)}=n\}=Y_{+,n+1}&C_-=h(0+)B_-&c_-=h(0)B_-
\end{array}
\]
Here $h$ is the density of the same normalized invariant measure with respect
to Lebesgue; continuity at $0$ will imply that the two columns agree.
\end{remark}

\begin{theorem}
\label{thm:Aaronson-criterion}
Let $(Z,\mathcal B,\rho,T,\mathcal Q)$ be a conservative Markov fibred
system on a standard probability space: $\mathcal Q$ is a countable
generating Markov partition, and $T$ is nonsingular and invertible on each
atom of $\mathcal Q$.  Suppose that the transition matrix is aperiodic and
that the branch Radon--Nikodym Jacobians have the uniform R\'enyi property
\begin{equation}
 \sup_{N\ge1}\
 \sup_{\substack{C\in\mathcal Q^{(N)}\\ \rho(C)>0}}
 \operatorname*{ess\,osc}_{C}\log J_\rho T^N<\infty,
 \qquad
 \mathcal Q^{(N)}=\bigvee_{j=0}^{N-1}T^{-j}\mathcal Q.
 \label{eq:Renyi-criterion}
\end{equation}
Here, on every measurable set $E$ contained in a branch on which $T$ is
injective, $J_\rho T$ is determined by
\[
  \rho(TE)=\int_EJ_\rho T\,d\rho.
\]
The cited sources formulate the R\'enyi condition using inverse-branch
Radon--Nikodym derivatives.  If $v$ is the inverse of an injective branch,
then $J_\rho v=(J_\rho T\circ v)^{-1}$ almost everywhere, so the logarithms
have the same essential oscillation under the branch correspondence.
Then $T$ is exact.
\end{theorem}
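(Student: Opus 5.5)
This criterion is essentially Aaronson's theorem on exactness of conservative aperiodic Markov fibred systems with the R\'enyi property (\cite[Theorem~3.2]{ADU93}; see also Aaronson's monograph, Section~4.6), so the plan is to reduce to that statement rather than reprove it from scratch. The only genuine work is matching conventions. First, the R\'enyi condition in the cited sources is phrased with inverse-branch derivatives $v'=d(\rho\circ v)/d\rho$ for $v=(T^N|_C)^{-1}$. For measurable $E\subset T^NC$ we have $\rho(E)=\int_{v E}J_\rho T^N\,d\rho$. The change of variables then gives $J_\rho v=(J_\rho T^N\circ v)^{-1}$ a.e. Consequently $\operatorname{ess\,osc}_{T^NC}\log J_\rho v=\operatorname{ess\,osc}_C\log J_\rho T^N$, and \eqref{eq:Renyi-criterion} becomes the cited hypothesis verbatim. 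Nonsingularity of $T$ together with $\rho(C)>0$ guarantees that these Jacobians exist and are positive a.e.

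Second, I would sketch the mechanism of the cited proof, to make clear which hypotheses are used where. The R\'enyi bound yields uniform bounded distortion $\rho(C\cap T^{-N}E)/\rho(C)\asymp \rho(E\cap T^NC)/\rho(T^NC)$ for every cylinder $C\in\mathcal Q^{(N)}$. Now take $E$ in the tail $\sigma$-algebra $\bigcap_n T^{-n}\mathcal B$ with $\rho(E)>0$, and write $E=T^{-N}E_N$. Because $\mathcal Q$ is generating, Lebesgue density along cylinders gives cylinders $C_N\in\mathcal Q^{(N)}$ with $\rho(E\cap C_N)/\rho(C_N)\to1$. Distortion transfers this to $\rho(E_N\cap T^NC_N)/\rho(T^NC_N)\to1$. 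Since the images $T^NC_N$ are unions of atoms, $E_N$ asymptotically fills some atom $a$. Conservativity combined with aperiodicity of the transition matrix then spreads this, via further iterates and the same distortion bound, to every atom. The conclusion is that $E$ has full measure, which is exactness.

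The main obstacle is the step in which $T^NC_N$ fills an atom. For a general countable Markov partition without the big-image property, these images vary, and one needs conservativity to ensure that, along a subsequence, they return to a fixed atom, and aperiodicity to avoid a cyclic decomposition of the tail. Since this is exactly the content of \cite[Theorem~3.2]{ADU93}, the proof will cite it after the Jacobian-convention reconciliation above. In particular, I would not redo the filling argument, but only check that $\rho$ is a probability on a standard space, $\mathcal Q$ is countable and generating, and the partition is Markov.
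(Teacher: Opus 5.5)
Your proposal follows the paper's proof: both reconcile the forward and inverse-branch Jacobians and then invoke \cite[Theorem~3.2]{ADU93}, and your sketch of the filling mechanism is not needed for that. The one difference is that the paper reads that theorem as stated with the Schweiger property and checks it explicitly (the collection of all positive-measure cylinders covers $Z$, is closed under prepending admissible atoms, and has the same uniform distortion constant), whereas you call the R\'enyi bound the cited hypothesis ``verbatim''; this is harmless, because the R\'enyi-to-Schweiger step is exactly that one line, and you could also cite the R\'enyi version in Aaronson's monograph directly.
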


\begin{proof}
Let $D$ denote the supremum in \eqref{eq:Renyi-criterion}.  On every
positive-measure cylinder $C\in\mathcal Q^{(N)}$,
\[
  e^{-D}\le
  \frac{J_\rho T^N(x)}{J_\rho T^N(y)}
  \le e^D
  \qquad\text{for }\rho\times\rho\text{-almost every }(x,y)\in C\times C.
\]
Thus every positive-measure cylinder has bounded metric distortion, with the
same constant $e^D$, in the sense of \cite[Section~2]{ADU93}.  Take as the
Schweiger collection all positive-measure cylinders.  It covers $Z$ modulo
$\rho$ (already the one-cylinders do), and if a cylinder belongs to this
collection, then prepending any admissible partition atom produces another
positive-measure cylinder with the same uniform distortion bound.  Therefore
the system has the Schweiger property.
Exactness now follows from \cite[Theorem~3.2]{ADU93}, which states that a
conservative, aperiodic Markov fibred system with the Schweiger property is
exact.  This is also the R\'enyi special case of
\cite[Theorem~4.4.7]{Aar97}.
\end{proof}

\begin{lemma}
\label{lem:tower-exactness}
For every $g\in\mathfrak F_*$, let
$G:\Delta_0^-\to\Delta_0^-$ be the first-return map of \cite{CLM23}, with
invariant probability $\widehat\mu$.  Its Kakutani tower, equipped with the
standard invariant sigma-finite measure, admits an equivalent probability
reference for which it is a conservative, irreducible and aperiodic Markov
fibred system with a generating partition and the R\'enyi property.  Hence
the tower is exact.  The factor map onto the interval then shows that $g$ is
conservative and exact in the invariant measure class, and therefore also in
the equivalent Lebesgue measure class.
\end{lemma}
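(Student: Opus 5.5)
The plan is to build the tower explicitly, check each hypothesis of Theorem~\ref{thm:Aaronson-criterion} for it, and then push exactness down to $g$ through the projection.

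\textbf{The tower and its reference measure.} Let $\varphi=\varphi_-$ be the return time of $G$. The tower is
\[
  Z=\{(y,i):y\in\Delta_0^-,\ 0\le i<\varphi(y)\},
\]
with $T(y,i)=(y,i+1)$ if $i+1<\varphi(y)$ and $T(y,\varphi(y)-1)=(Gy,0)$. Its invariant measure is $\widehat\mu\otimes\text{counting}$, which is infinite because $\widehat\mu(\varphi>n)\gtrsim n^{-\alpha}$ by \eqref{eq:tails} and $\alpha\le1$. As an equivalent probability reference I take
\[
  \rho=c\sum_{i\ge0}2^{-i-1}\,\widehat\mu|_{\{\varphi>i\}}\otimes\delta_i .
\]
For the partition $\mathcal Q$ I use the levels $\{(y,i):y\in a,\ i\text{ fixed}\}$, where $a$ ranges over the atoms $\delta^-_{m,n}$ of $G$ from \eqref{eq:CLM-full-return-branches}. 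Then $T$ is injective on each atom, maps it onto a full atom at the next level or, from the top level, onto the whole base, and this is the Markov property. The partition generates because the Gibbs--Markov partition of $G$ generates: the symbolic separation time of $G$ is finite off the diagonal, using the expansion \eqref{eq:full-return-expansion}.

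\textbf{The Jacobian and the R\'enyi property.} For $\rho$ the Jacobian of one tower step is $\tfrac12$ on non-top levels. On a top level $i$ it equals $2^{i}\,J_{\widehat\mu}G\cdot c'$, up to the bounded factor coming from the normalization of level $0$. For a cylinder in $\mathcal Q^{(N)}$, $J_\rho T^N$ is therefore a product of three pieces: the deterministic powers of $2$, which are constant on the cylinder because the cylinder fixes the levels; the constant factors; and $J_{\widehat\mu}G^k$ for the $k$ full returns completed inside the cylinder. By the Gibbs--Markov bounded distortion of $G$, the last piece has log-oscillation bounded uniformly in $k$. Here $\widehat\mu$ has a density bounded above and below with log-Lipschitz behaviour, so the same holds for Lebesgue. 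This gives \eqref{eq:Renyi-criterion}.

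\textbf{Conservativity and aperiodicity.} Conservativity follows because almost every point of the tower reaches the base, and $G$ preserves the probability $\widehat\mu$ and hence is conservative (Kac/Poincar\'e), which lifts to the tower. For irreducibility and aperiodicity I use that $G$ is full-branch, so every base atom maps onto the whole base. The return lengths $m+n$ with $m,n\ge1$ include both $2$ and $3$, since both $\delta^-_{1,1}$ and $\delta^-_{1,2}$ are nonempty, and $\gcd(2,3)=1$. Every state then communicates through level $0$ with period $1$. Theorem~\ref{thm:Aaronson-criterion} now gives that $(Z,\rho,T)$ is exact, and hence so is the equivalent invariant measure class.

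\textbf{Descent to $g$.} The projection is $\pi(y,i)=g^iy$, so $\pi\circ T=g\circ\pi$, and $\pi_*(\widehat\mu\otimes\text{counting})$ is the $g$-invariant measure $\mu$, which is equivalent to $m$ because almost every point enters $\Delta_0^-$, as shown in the proof of Proposition~\ref{prop:excursions}. Exactness passes to factors: if $E\in\bigcap_n g^{-n}\mathcal B$, then $\pi^{-1}E\in\bigcap_nT^{-n}\mathcal B_Z$, so $\pi^{-1}E$ is trivial and $\mu(E)=0$ or $\mu(I\setminus E)=0$. Conservativity also descends. If $E$ has positive measure, then $\pi^{-1}E$ is recurrent for $T$, and the projected orbits return to $E$; a wandering set for $g$ would pull back to a wandering set for $T$.

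\textbf{Main obstacle.} The delicate step is the uniform R\'enyi bound. The difficulty is choosing $\rho$ so that, on cylinders ending partway up the tower, the level weights cancel exactly rather than contributing an unbounded factor such as $\widehat\mu(\varphi>i)$. With the geometric weighting above this cancellation is automatic, since the weights depend only on the levels, and the levels are fixed on each cylinder.
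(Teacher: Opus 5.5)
Your proof follows the paper's own proof almost step for step, with one step justified by the wrong fact. The shared steps are:
\begin{itemize}
\item the same tower over $G$ with partition $\{\delta^-_{m,n}\times\{\ell\}\}$;
\item the same $2^{-\ell}$-weighted probability reference (your weight $c\,2^{-i-1}$ on level $i$ is the paper's $Z^{-1}2^{-\ell}$);
\item the observation that level-dependent weights contribute only a factor that is constant on each $\mathcal Q^{(N)}$-cylinder;
\item the R\'enyi bound from the Gibbs--Markov distortion of $J_{\widehat\mu}G$, summed over completed returns;
\item Theorem~\ref{thm:Aaronson-criterion};
\item descent of exactness through the tail $\sigma$-algebra.
\end{itemize}
Your aperiodicity argument, using column heights $2$ and $3$ together with full branches, is a harmless variant of the paper's proof that the transition matrix is mixing, which uses that every height $q\ge2$ occurs on $\delta^-_{1,q-1}$.

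\textbf{The misjustified step.} You claim that $\pi_*(\widehat\mu\otimes\text{counting})$ is equivalent to $m$ because almost every point enters $\Delta_0^-$. Forward entrance only says that almost every orbit eventually enters the image
\[
\pi(Z)=\bigcup_{i\ge0}g^i\{\varphi>i\}.
\]
Since $g(\pi(Z))\subset\pi(Z)$, forward entrance does not rule out a positive-measure transient set $I\setminus\pi(Z)$ whose points feed into the tower without lying in its image. On such a set $m\not\ll\pi_*(\widehat\mu\otimes\text{counting})$. In that case the last sentence of the lemma, the transfer of conservativity and exactness to the Lebesgue class, would not follow from your argument.

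\textbf{The fix.} What you need is backward saturation: $\pi(Z)=I$ modulo $m$. Here it is immediate from the full branches, up to endpoints:
\begin{itemize}
\item $g(\Delta_0^-)=(0,1)$;
\item every $x\in(-1,0)\setminus\Delta_0^-$ equals $g^2y$ with $y=g_-^{-1}(g_+^{-1}x)\in\delta_1^-$ and $\varphi(y)>2$.
\end{itemize}
Now suppose $\pi_*(\widehat\mu\otimes\text{counting})(E)=0$. Then each set $\{\varphi>i\}\cap g^{-i}E$ is Lebesgue-null. Its image under the injective $C^1$ branches of $g^i$ is therefore null, and saturation gives $m(E)=0$. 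The paper reaches the same conclusion by citing \cite[Lemma~3.2 and Theorem~A]{CLM23}. With this repair your argument is complete.
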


\begin{proof}
Write $\tau_0$ for the return time and set
\[
  \Delta=\{(x,\ell):x\in\Delta_0^-,\ 0\le\ell<\tau_0(x)\},
\]
with tower map
\[
  T_\Delta(x,\ell)=
  \begin{cases}
    (x,\ell+1),&\ell+1<\tau_0(x),\\
    (Gx,0),&\ell+1=\tau_0(x).
  \end{cases}
\]
The return partition is
$\mathcal P^- =\{\delta_{m,n}^-:m,n\ge1\}$, and
\cite[equations~(38)--(40)]{CLM23} gives
\begin{equation}
  \tau_0=m+n,
  \qquad
  G|_{\delta_{m,n}^-}=g^{m+n}:\delta_{m,n}^-\to\Delta_0^-
  \quad\text{bijectively}.
  \label{eq:two-index-return-branches}
\end{equation}
Equip the tower with the standard lift
\begin{equation}
  \overline\mu
  =\sum_{\ell\ge0}
    \bigl(\widehat\mu|_{\{\tau_0>\ell\}}\bigr)\otimes\delta_\ell
  \label{eq:standard-tower-measure}
\end{equation}
and use the countable partition
\[
  \mathcal Q=
  \{\delta_{m,n}^-\times\{\ell\}:m,n\ge1,\ 0\le\ell<m+n\}.
\]
Every level has finite measure, so $\overline\mu$ is sigma-finite, including
when $\overline\mu(\Delta)=\infty$.

\emph{Invariance, nonsingularity, and conservativity.}
For every nonnegative measurable $\Phi$, separation of the vertical and top
levels gives
\begin{align*}
 \int_\Delta \Phi\circ T_\Delta\,d\overline\mu
 &=\sum_{\ell\ge0}\int_{\{\tau_0>\ell+1\}}
      \Phi(x,\ell+1)\,d\widehat\mu(x)
   +\sum_{\omega\in\mathcal P^-}\int_\omega
      \Phi(Gx,0)\,d\widehat\mu(x)\\
 &=\sum_{j\ge1}\int_{\{\tau_0>j\}}\Phi(x,j)\,d\widehat\mu(x)
   +\int_{\Delta_0^-}\Phi(y,0)\,d\widehat\mu(y)\\
 &=\int_\Delta\Phi\,d\overline\mu,
\end{align*}
because $G$ preserves $\widehat\mu$.  Hence $T_\Delta$ preserves
$\overline\mu$ and is nonsingular.  Since every tower point reaches the base
in finitely many iterates and the probability-preserving base map $G$ is
conservative, the standard return argument shows that $T_\Delta$ is
conservative.

\emph{Markov and generating properties.}
A vertical atom is mapped bijectively and nonsingularly onto the next atom in
its column, while the top atom of every column is mapped bijectively and
nonsingularly onto the whole base, a union of level-zero atoms.  Thus
$\mathcal Q$ has the Markov and local-invertibility properties.  It is
generating modulo the union of partition boundaries.  Indeed, two points
with the same $\mathcal Q$-itinerary have the same level at every time and,
at successive returns, their base coordinates have the same
$\mathcal P^-$-itinerary.  Uniform expansion of the Gibbs--Markov map $G$
forces the diameters of its cylinders to tend to zero, so the base
coordinates, and then the tower points, coincide.  Since
$\widehat\mu\sim\Leb$ on the base, the omitted boundaries are
$\overline\mu$-null.

\emph{The invariant-measure Jacobian and the R\'enyi property.}
The Jacobian in Theorem~\ref{thm:Aaronson-criterion} is computed with
respect to $\overline\mu$.  On every vertical atom,
\begin{equation}
  J_{\overline\mu}T_\Delta(x,\ell)=1
  \qquad(\ell+1<\tau_0(x)).
  \label{eq:vertical-invariant-Jacobian}
\end{equation}
If $\omega=\delta_{m,n}^-$, $q=m+n$, and
$d\widehat\mu=\widehat h\,d\Leb$, then on the top of that column
\begin{equation}
  J_{\overline\mu}T_\Delta(x,q-1)
  =J_{\widehat\mu}G(x)
  =\frac{\widehat h(Gx)|G'(x)|}{\widehat h(x)}.
  \label{eq:top-invariant-Jacobian}
\end{equation}
This follows directly from the defining identity for branch Jacobians:
for $E\subset\omega$,
\[
 \overline\mu\bigl(T_\Delta(E\times\{q-1\})\bigr)
 =\widehat\mu(GE)
 =\int_EJ_{\widehat\mu}G\,d\widehat\mu.
\]
Thus no Euclidean derivative is being substituted for the required
Radon--Nikodym Jacobian.

The Gibbs--Markov distortion estimate
\cite[Corollary~3.11]{CLM23}, together with the invariant density supplied in
the proof of \cite[Theorem~A]{CLM23}, yields constants $C_0>0$ and
$\theta\in(0,1)$ such
that, for $u,v$ in one return atom,
\begin{equation}
  \bigl|\log J_{\widehat\mu}G(u)
        -\log J_{\widehat\mu}G(v)\bigr|
  \le C_0\theta^{s_G(u,v)},
  \label{eq:base-Jacobian-distortion}
\end{equation}
where $s_G$ is separation time for $\mathcal P^-$.  Indeed, $\widehat h$ is
Lipschitz and bounded above and away from zero on the base, so
$\log\widehat h$ is Lipschitz; uniform expansion converts its Euclidean
variation on common return cylinders into a symbolic-Lipschitz bound.
Together with Corollary~3.11, this applies term by term to
\[
  \log J_{\widehat\mu}G
  =\log|G'|+\log\widehat h\circ G-\log\widehat h,
\]
and gives \eqref{eq:base-Jacobian-distortion}.

Let $C$ be a positive-measure $N$-cylinder for $\mathcal Q$, and suppose that its
itinerary contains $r$ completed returns.  The vertical factors in
\eqref{eq:vertical-invariant-Jacobian} contribute no oscillation.  At the
$j$th completed return, $0\le j<r$, the two pre-return base points have the
same current return atom and the same remaining $r-j-1$ return symbols;
hence their separation time is at least $r-j$.  Summing
\eqref{eq:base-Jacobian-distortion} over the completed returns gives
\[
  \operatorname*{ess\,osc}_{C}
       \log J_{\overline\mu}T_\Delta^N
  \le C_0\sum_{j=0}^{r-1}\theta^{r-j}
  \le \frac{C_0\theta}{1-\theta}.
\]
For $r=0$ the oscillation is zero.  This proves
\eqref{eq:Renyi-criterion} uniformly on all tower cylinders, including
cylinders crossing arbitrarily many returns.

The exactness theorem is formulated with a probability reference, whereas
$\overline\mu$ can have infinite mass.  This causes no difficulty.  Define
\[
  d\rho_\Delta(x,\ell)=Z^{-1}2^{-\ell}\,d\overline\mu(x,\ell),
  \qquad
  Z=\sum_{\ell\ge0}2^{-\ell}
       \widehat\mu(\tau_0>\ell)\in(0,2].
\]
Then $\rho_\Delta$ is a probability equivalent to $\overline\mu$.  If $C$ is
an $N$-cylinder, the levels $\ell_0$ and $\ell_N$ at times $0$ and $N$ are
constant on $C$.  The change-of-reference formula for branch Jacobians gives
\begin{equation}
  J_{\rho_\Delta}T_\Delta^N(x)
  =2^{-(\ell_N-\ell_0)}J_{\overline\mu}T_\Delta^N(x),
  \qquad x\in C.
  \label{eq:tower-probability-Jacobian}
\end{equation}
The prefactor is constant on $C$, so the essential oscillation of its
logarithm is zero.  Consequently the R\'enyi bound just proved for
$\overline\mu$ holds with exactly the same constant for the probability
reference $\rho_\Delta$.  Conservativity, null sets, the Markov property, and
the generating property are unchanged on passing to an equivalent measure.

\emph{Irreducibility and aperiodicity.}
For $Q=\delta_{m,n}^-\times\{\ell\}$ put
$r(Q)=m+n-\ell$.  Then $T_\Delta^{r(Q)}|_Q$ maps $Q$ bijectively onto the
base.  Fix another atom
$Q'=\delta_{m',n'}^-\times\{\ell'\}$.  Whenever
$N\ge r(Q)+\ell'+2$, set
$q=N-r(Q)-\ell'\ge2$.  Every integer $q\ge2$ occurs as a return time, for
example on $\omega_q=\delta_{1,q-1}^-$.  Since
$G|_{\omega_q}$ maps $\omega_q$ bijectively onto the base,
\[
  (G|_{\omega_q})^{-1}(\delta_{m',n'}^-)
\]
is a nondegenerate interval and has positive $\widehat\mu$-measure.  Pulling
this set back through $T_\Delta^{r(Q)}|_Q$ produces a positive-measure subset
of $Q$ whose image, after the chosen return and $\ell'$ vertical steps, lies
in $Q'$.  Hence
\[
  \rho_\Delta\bigl(Q\cap T_\Delta^{-N}Q'\bigr)>0
  \qquad\text{for all }N\ge r(Q)+\ell'+2.
\]
Thus the transition matrix is mixing, and in particular irreducible and
aperiodic.

All hypotheses of Theorem~\ref{thm:Aaronson-criterion} have now been
verified for $(\Delta,\rho_\Delta,T_\Delta,\mathcal Q)$.  Therefore
$T_\Delta$ is exact modulo $\rho_\Delta$, equivalently modulo
$\overline\mu$.

\emph{Passage to the interval factor.}
Define
\[
  \pi:\Delta\to I,
  \qquad
  \pi(x,\ell)=g^\ell x.
\]
Outside the countable union of partition-boundary orbits,
$\pi\circ T_\Delta=g\circ\pi$; hence this identity holds modulo
$\overline\mu$.  Moreover,
\begin{equation}
  \pi_*\overline\mu
  =\sum_{\ell\ge0}g_*^\ell
       \bigl(\widehat\mu|_{\{\tau_0>\ell\}}\bigr)
  =:\widetilde\mu.
  \label{eq:correct-first-return-lift}
\end{equation}
The strict inequality $\{\tau_0>\ell\}$ is forced by the tower definition:
a point contributes to level $\ell$ exactly when $\ell<\tau_0$.  The display
\cite[equation~(21)]{CLM23} writes $\{\tau_0\ge n\}$ next to $g_*^n$; with
the same level index this is an off-by-one typographical discrepancy.  We
use the invariant lift \eqref{eq:correct-first-return-lift}.

By \cite[Lemma~3.2 and Theorem~A]{CLM23}, the tower saturates $I$ modulo
Lebesgue measure zero and $\widetilde\mu\sim\Leb$.  Hence $\pi$ is a
surjective measure-preserving factor modulo null sets.  Conservativity
descends immediately.  To see that exactness descends, let
$B\in\bigcap_{N\ge0}g^{-N}\mathcal B_I$.  For each $N$, choose $B_N$ with
$B=g^{-N}B_N$ modulo $\widetilde\mu$.  Then
\[
  \pi^{-1}B=T_\Delta^{-N}(\pi^{-1}B_N)
  \quad\text{modulo }\overline\mu
\]
for every $N$.  Thus $\pi^{-1}B$ belongs to the tail sigma-algebra of
$T_\Delta$ and is null or conull.  Since
$\pi_*\overline\mu=\widetilde\mu$, the same is true of $B$.  Therefore $g$
is conservative and exact for $\widetilde\mu$.  Since
$\widetilde\mu\sim\Leb\sim m$, these are also properties of the Lebesgue
measure class.  In particular, $g$ satisfies \textup{(S1)}.
\end{proof}

\subsection{Verification for the balanced class}

The application theorem in \cite[Section~4.2]{CMT26} is stated for the common excursion exponent $\alpha\in(0,1)$.  Since Theorem~C of that paper allows $\alpha=1$, we first verify explicitly that its inducing construction and hypotheses continue to hold at the endpoint exponent.

\begin{lemma}
\label{lem:alpha-one-CMT}
Let $g\in\mathfrak F_*$ and suppose that $\beta=1$, equivalently $\alpha=1$.
Then, with $X=I$, $f=g$, $d=2$, $\xi_1=-1$, and $\xi_2=1$, the inducing
scheme of \cite[Section~4.2]{CMT26} satisfies \textup{(S1)--(S4)} and
hypotheses \textup{(H1)--(H4)} stated above, with exponent $\alpha=1$.
\end{lemma}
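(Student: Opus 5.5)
We verify the hypotheses one at a time, reusing as much of the $\beta>1$ argument as possible. The key point is that most of the verification does not depend on the value of $\alpha$.

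\textbf{Standing assumptions (S1)--(S4).} For (S1), conservativity and exactness are supplied by Lemma~\ref{lem:tower-exactness}, which holds for every $g\in\mathfrak F_*$ including $\beta=1$. The Markov property comes from the partition $\{X_{\pm,n}\}\cup\{Y_{\pm,n}\}$ of \eqref{eq:CMT-partition}.

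For (S2), $F=g^\tau$ on $Y=[\gamma_-,\gamma_+]$ has full branches onto $Y_-$ or $Y_+$ on the atoms of \eqref{eq:cylinder-comparison-minus-a}--\eqref{eq:cylinder-comparison-plus-b}, so its images are finitely many. Uniform expansion follows from Lemma~\ref{lem:class-compatibility} together with \eqref{eq:full-return-expansion}. Bounded distortion is then the standard Koebe/Denjoy estimate. It uses the convexity and the bounds in (F2)(1) at the neutral ends and the power-law form \eqref{eq:A1-critical} at $0$. This is exactly the argument of \cite[Corollary~3.11]{CLM23}, which is stated for all $\beta$ and is insensitive to $\alpha$.

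For (S3), topological mixing of $g$ and of $F$ follows from full branches plus the presence of return times $1$ and $2$: a period-two orbit gives a return of length $2$, and the atoms $Y_{\pm,n}$ give all $n\ge2$. This mirrors the aperiodicity argument in Lemma~\ref{lem:tower-exactness}.

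For (S4), the measure $\widetilde\mu\sim\Leb$ is unique by exactness (via conservativity and ergodicity). Its mass is infinite because $\mu(\tau>n)\asymp n^{-1}$, which is not summable, as computed below.

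\textbf{Tails and (H1)--(H2).} The core computation is the tail asymptotic \eqref{eq:H2-tail} with $\alpha=1$. By Remark~\ref{rem:endpoint-conventions}, $\{\tau^{(+)}>n\}=\bigcup_{j>n}Y_{-,j+1}$, which is an interval adjacent to $0$ inside $g_-^{-1}(X_{+,\ge n})$.

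First, the neutral expansion $x-b_+(1-x)^{1+\ell_+}$ gives $|X_{+,\ge n}|\sim(\ell_+b_+n)^{-1/\ell_+}$ by the standard comparison with the ODE $\dot u=-b_+u^{1+\ell_+}$. Pulling this back through $1-a_-|x|^{k_-}$ gives a Lebesgue length of $a_-^{-1/k_-}(\ell_+b_+n)^{-1/(\ell_+k_-)}$. Since $\ell_+k_-=1$, this equals $B_+n^{-1}$.

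Continuity of the invariant density at $0$ then gives $\mu(\tau^{(+)}>n)\sim h(0)B_+n^{-1}$, and symmetrically at the other endpoint. Here $h$ is Lipschitz on $Y$ by \cite[Lemma~2.1]{CMT26}, and continuity across $0$ follows from the Lipschitz property on $Y$ in the Euclidean sense. This is the same route as \cite[Lemma~4.11]{CMT26}, which I would redo directly by summation.

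(H2)(a),(b) are combinatorial: set $X_1=\bigcup_nX_{-,n}$ and $X_2=\bigcup_nX_{+,n}$. Orbits cannot cross from one to the other without passing through $Y$, because each branch is monotone on its side with no interior fixed point.

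(H1) follows from the same estimate. Outside $B_\eps(\pm1)$ only finitely many levels $X_{\pm,n}$ remain, and each of them has finite $\mu$-measure because its density is bounded via the transfer of the density from $Y$.

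\textbf{(H3) and (H4).} (H4) is vacuous, since $\alpha=1>\tfrac12$.

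For (H3), I would take $\mathcal K(I)$ to be the functions that are Lipschitz on each atom of $\mathcal P$ and supported on finitely many atoms; this family is dense in $L^1$. For such $\rho$, $Q_r^\rho$ is a finite sum of terms. Each term is $\rho/Jg^r$ composed with an inverse branch of $g^r$ that lands in $Y$. The distortion of these last-entrance branches is bounded uniformly in $r$ by the same Koebe argument as above. Their composition with the uniformly expanding $F$-structure then yields symbolic Lipschitz regularity. Note that $Q_r^\rho$ vanishes for $r$ larger than the maximal level in the support of $\rho$ plus one.

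\textbf{Main obstacle.} The step I expect to be hardest is the sharp tail asymptotic at $\alpha=1$ together with the continuity of $h$ at $0$. This is the only place where $\beta=1$ is genuinely used. There I would rely on the exact endpoint forms \eqref{eq:A1-endpoints}, which make the ODE comparison clean with only $O(\log n/n)$ corrections.
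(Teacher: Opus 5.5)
\textbf{Main gap: regularity of the density at $0$.} Your proposal breaks down at the step you flagged as hardest, but not for the reason you give. To turn the correct length asymptotic $\Leb(\tau^{(+)}>n)\sim B_+n^{-1}$ into $\mu(\tau^{(+)}>n)\sim c_+n^{-1}$, you need the density $h$ of $\mu$ on $Y$ to have a one-sided limit at $0$ in the Euclidean sense. You take Lipschitz continuity of $h$ from \cite[Lemma~2.1]{CMT26}, but that lemma only says that $h$ and $1/h$ are bounded and Lipschitz for the \emph{symbolic} metric $d_\theta$ of the return partition. Points in different return atoms have $d_\theta=1$. Moreover, $0$ is the accumulation point of the infinitely many atoms $Y_{-,n}$ (from the left) and $Y_{+,n}$ (from the right), so symbolic regularity says nothing there. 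For instance, a symbolic-Lipschitz density could take different constant values on alternating dyadic blocks of atoms. Then $n\,\mu(\tau^{(+)}>n)$ would oscillate, and you would only get $\mu(\tau^{(+)}>n)\asymp n^{-1}$, not the regular asymptotic required in \textup{(H2)(c)}. The exact endpoint forms and the ODE comparison control the lengths of the tail intervals, not the density on them. The paper supplies this ingredient separately. For the full inverse branches $v_a$ of $F^n$, uniform expansion and Euclidean distortion give $|v_a'|\le C|a|$ and $\bigl||v_a'(x)|-|v_a'(y)|\bigr|\le C|a|\,|x-y|$. Hence $\mathcal L_Y^n1=\sum_a|v_a'|$ is uniformly bounded and uniformly Lipschitz. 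Arzel\`a--Ascoli applied to the Ces\`aro averages then gives a Lipschitz invariant density, which by uniqueness of the absolutely continuous invariant probability of $F$ is a constant multiple of $h$. Once that is in place, your direct computation of the tail interval, in place of summing the level asymptotics of \cite[Lemma~4.11]{CMT26}, is a legitimate and somewhat more elementary alternative.

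\textbf{Secondary points in the standing assumptions.} Uniform expansion of $F$ does not follow from Lemma~\ref{lem:class-compatibility} and \eqref{eq:full-return-expansion} alone. The former gives expansion only on the finitely many atoms not contained in $U_{0\pm}$. The latter covers only the components $Y_{\pm,n}\cap\delta_{n-1}^\pm$. This leaves the half-return components $Y_{\pm,n}\cap\delta_n^\pm$ with $n>n_\pm$ uncovered; the paper instead uses the comparison-map induction of \cite[Lemma~4.13]{CMT26}, which needs only \textup{(F0)--(F2)}. Similarly, \cite[Corollary~3.11]{CLM23} is a distortion bound for the two-leg return $G_-$ on $\Delta_0^-$, not for $F$ on $Y$. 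The paper instead checks that the exponent-dependent terms in the proof of \cite[Lemma~4.15]{CMT26} are $O(n^{-1})$ and $O((n-k)^{-2})$ at $\alpha=1$. There is no return time $1$: since $g(Y)=I\setminus Y$ modulo endpoints, $\tau\ge2$, and aperiodicity comes from return times $2$ and $3$. Finally, you do not verify the generating (separation) property of the interval partition required in \textup{(S1)}; the paper derives it from the expansion of $F$.
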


\begin{proof}
By Lemma~\ref{lem:class-compatibility}, $g\in\mathcal F_{\mathrm{CMT}}$.  The
endpoint labels are those in Remark~\ref{rem:endpoint-conventions}: from
\cite[equation~(4.9)]{CMT26},
\[
  g:Y_{-,n}\to X_{+,n-1},
  \qquad
  g:Y_{+,n}\to X_{-,n-1},
\]
so $Y_{-,n}$ visits $+1$ and $Y_{+,n}$ visits $-1$.  Balance and $\beta=1$
give $\alpha_+=\alpha_-=1$.

\emph{Standing assumptions.}
Let $F=g^\tau:Y\to Y$ be the first-return map.  The expansion proof in
\cite[Lemma~4.13]{CMT26} uses only \textup{(F0)--(F2)} and positivity of the
local exponents.  Its comparison-map induction therefore gives a constant
$\Lambda_F>1$, independent of the return atom, such that
\begin{equation}
  |F'(x)|\ge\Lambda_F
  \qquad(x\in Y_{\epsilon,n},\ \epsilon\in\{+,-\},\ n\ge2).
  \label{eq:alpha-one-uniform-expansion}
\end{equation}
For completeness, we isolate the only estimates in the distortion argument
where an endpoint exponent occurs.  In the notation of
\cite[equation~(4.11)]{CMT26}, the logarithmic distortion on
$Y_{\epsilon,n}$ is controlled by a first-passage term and a neutral-leg
sum.  Lemma~4.11 of that paper and condition \textup{(F2)} give, uniformly in
$n$ and in the intermediate points $u_k$ used there,
\begin{equation}
 \frac{|g''(u_0)|}{|g'(u_0)|}\,|Y_{\epsilon,n}|
       \le Cn^{-1},
 \qquad
 \frac{|g''(u_k)|}{|g'(u_k)|}\,|X_{-\epsilon,n-k}|
       \le C(n-k)^{-2}\quad(1\le k<n).
 \label{eq:alpha-one-distortion-terms}
\end{equation}
Here $-\epsilon$ denotes the sign opposite to $\epsilon$.
Indeed, the first product is bounded by
$Cn^{\alpha_\epsilon}n^{-1-\alpha_\epsilon}=Cn^{-1}$, while the second is
bounded by
\[
 C(n-k)^{-1+1/\ell_{-\epsilon}}
   (n-k)^{-1-1/\ell_{-\epsilon}}
 =C(n-k)^{-2}.
\]
These calculations require only positive local exponents and remain valid
when $\alpha_\epsilon=1$.  Since
$\sum_{j\ge1}j^{-2}<\infty$, the proof of
\cite[Lemma~4.15]{CMT26} now yields constants $C_D<\infty$ and
$\theta\in(0,1)$, independent of the atom, such that
\begin{equation}
  \left|\log\frac{|F'(x)|}{|F'(y)|}\right|
  \le C_D|Fx-Fy|
  \le C_D\theta^{s_F(x,y)}
  \qquad(x,y\in Y_{\epsilon,n}).
  \label{eq:alpha-one-GM-distortion}
\end{equation}
Every $F:Y_{\epsilon,n}\to Y$ is bijective.  Equations
\eqref{eq:alpha-one-uniform-expansion} and
\eqref{eq:alpha-one-GM-distortion} therefore prove directly that $F$ is a
full-branch Gibbs--Markov map; full branching implies topological mixing.
This proves at $\alpha=1$ the conclusion of
\cite[Proposition~4.12]{CMT26}, and hence \textup{(S2)} and the assertion
about $F$ in \textup{(S3)}.

The partition $\{Y_{\pm,j},X_{\pm,n}:j\ge2,n\ge1\}$ is Markov for $g$ by
\eqref{eq:CMT-partition}.  It is also generating modulo $m$.  Let
$\mathcal E$ be the union of all partition endpoints and all their backward
iterates.  This set is countable, hence null for both $m$ and the equivalent
measure $\widetilde\mu$.  Suppose that $x,x'\in I\setminus\mathcal E$ have
the same partition itinerary and initially lie in the atom $A$.  There is an
integer $r_A\ge1$ for which $g^{r_A}|_A$ maps $A$ bijectively onto $Y$.
The points $y=g^{r_A}x$ and $y'=g^{r_A}x'$ then have the same successive
return atoms for $F$.  If $\Lambda>1$ is the uniform expansion constant of
$F$, then for every $q\ge1$ the points $y,y'$ lie in a common $q$-cylinder,
and therefore
\[
  |y-y'|\le \operatorname{diam}(Y)\Lambda^{-q}.
\]
Letting $q\to\infty$ gives $y=y'$, and injectivity of $g^{r_A}|_A$ gives
$x=x'$.  Thus the separation condition in the abstract Markov setup holds
modulo both $m$ and $\widetilde\mu$.  Lemma~\ref{lem:tower-exactness}
gives conservativity and exactness, and exactness implies ergodicity.
Together with the generating Markov partition above, this verifies
\textup{(S1)}.  To check the remaining part of
\textup{(S3)}, fix source and target atoms $A,B$.  There is $r_A\ge1$ such
that $g^{r_A}|_A$ maps $A$ bijectively onto $Y$.  Choose $C_B\subset Y$ and
$s_B\in\{0,1\}$ so that $g^{s_B}|_{C_B}$ maps $C_B$ bijectively onto $B$:
use $C_B=B,s_B=0$ for a return atom and
$C_B=Y_{\mp,j+1},s_B=1$ for $B=X_{\pm,j}$.  For
$n\ge r_A+s_B+2$, put $q=n-r_A-s_B$.  A return branch $Y_{\pm,q}$ maps
bijectively onto $Y$ under $g^q$; restricting it to the inverse image of
$C_B$ and pulling back through $g^{r_A}|_A$ gives a nonempty subset of $A$
mapped into $B$ by $g^n$.  Thus $g^nA\cap B\ne\varnothing$ for all large
$n$, proving topological mixing.

The lifted measure $\widetilde\mu$ in
\eqref{eq:correct-first-return-lift} is the unique, up to scaling,
invariant absolutely continuous sigma-finite measure supplied by
\cite[Theorem~A]{CLM23}; it is equivalent to $m$.  Also
$0<\widetilde\mu(Y)<\infty$ by \cite[Lemma~2.1]{CMT26}, as required by the
inducing framework.  Infinitude of the total mass, the remaining point in
\textup{(S4)}, follows below from the return tail.

\emph{Hypotheses \textup{(H1)} and \textup{(H2)(a)--(b)}.}
The intervals $X_{-,n}$ and $X_{+,n}$ accumulate only at $-1$ and $1$,
respectively.  Given $\eps>0$, all but finitely many lie in
$U_\eps^-$ or $U_\eps^+$.  Hence
$I\setminus(U_\eps^-\cup U_\eps^+)$ is contained, modulo endpoints, in $Y$
and finitely many $X_{\pm,n}$.  Since $X_{\pm,n}\subset g^{-n}Y$,
invariance gives
\[
  \widetilde\mu(X_{\pm,n})\le\widetilde\mu(g^{-n}Y)
  =\widetilde\mu(Y)<\infty,
\]
which proves \textup{(H1)}.  Put
\[
  X_-=[-1,\gamma_-),
  \qquad
  X_+=(\gamma_+,1].
\]
Lemma~4.10 of \cite{CMT26} states that $Y$ dynamically separates these two
sets, proving \textup{(H2)(a)--(b)}.  In this concrete construction
$g(Y)=I\setminus Y$ modulo endpoints, so $\tau\ge2$ almost everywhere and
the return-time-one set is empty.  More generally, the zero-occupation case
in \textup{(H2)(b)} must be treated separately from the union formula for
$n\ge2$, as was done in the abstract statement above.

\emph{Lipschitz continuity and normalization of the induced density.}
Write
\[
  h_{\Leb}=\frac{d\widetilde\mu}{d\Leb}\bigg|_Y.
\]
We reproduce the transfer-operator estimate from the last part of the proof
of \cite[Theorem~4.9]{CMT26} to make clear that it is uniform at
$\alpha=1$.  Let $\mathcal L_Y$ be the transfer operator of $F$ with respect
to Lebesgue.  If $a$ is an $n$-cylinder and $v_a:Y\to a$ is the inverse
branch of $F^n$, uniform expansion and the distortion estimate
\cite[equation~(4.12)]{CMT26} give constants independent of $a,n$ such that
\[
  |v_a'(x)|\le C|a|,
  \qquad
  \bigl||v_a'(x)|-|v_a'(y)|\bigr|\le C|a|\,|x-y|.
\]
Since the $n$-cylinders partition $Y$ modulo endpoints,
\[
  \mathcal L_Y^n1(x)=\sum_a|v_a'(x)|
\]
is uniformly bounded and uniformly Lipschitz in $n$.  The Ces\`aro averages
$H_n=n^{-1}\sum_{j=0}^{n-1}\mathcal L_Y^j1$ therefore have a uniformly
convergent subsequence by Arzel\`a--Ascoli.  Moreover,
\[
  \mathcal L_YH_n-H_n=\frac{\mathcal L_Y^n1-1}{n}\longrightarrow0
  \quad\text{uniformly},
\]
The branch formula and the uniform bound on $\mathcal L_Y1$ imply
$\|\mathcal L_Yv\|_\infty\le C\|v\|_\infty$.  Hence $\mathcal L_Y$ is
continuous for uniform convergence, and passing to the chosen subsequence in
the last display gives $\mathcal L_Yh_0=h_0$.  Thus $h_0$ is an invariant
Lipschitz density.  Since
$\mathcal L_Y$ preserves integrals,
$\int_Yh_0\,d\Leb=\Leb(Y)$, and hence $h_0/\Leb(Y)$ is an invariant
probability density.  Uniqueness of the Gibbs--Markov absolutely continuous
invariant probability gives
\[
  \frac{h_0}{\Leb(Y)}
  =\frac{h_{\Leb}}{\widetilde\mu(Y)}
  \quad\text{almost everywhere on }Y.
\]
Consequently
$h_{\Leb}=(\widetilde\mu(Y)/\Leb(Y))h_0$ has a Lipschitz representative on
$Y$.  Positivity and
boundedness away from zero follow from \cite[Lemma~2.1]{CMT26}, so
\[
  h_{\Leb}(0-)=h_{\Leb}(0+)=:h_{\Leb}(0)>0.
\]
This argument uses only expansion and distortion and not the tail exponent.
Since $m=\Leb/2$, the density relative to $m$ is
$h_m=2h_{\Leb}$.  For every measurable $A\subset Y$,
\[
  \int_Ah_{\Leb}\,d\Leb=\int_Ah_m\,dm,
\]
so the change of reference measure does not change any tail constant.

\emph{Hypothesis \textup{(H2)(c)} and infinitude.}
At $\alpha=1$, Remark~\ref{rem:endpoint-conventions} gives
\[
  B_+=a_-^{-1/k_-}(\ell_+b_+)^{-1},
  \qquad
  B_-=a_+^{-1/k_+}(\ell_-b_-)^{-1}.
\]
We use Lemma~4.11 of \cite{CMT26}.  In endpoint-indexed notation it gives
\[
  |Y_{-,n}|\sim B_+n^{-2},
  \qquad
  |Y_{+,n}|\sim B_-n^{-2}.
\]
The displayed constants immediately preceding Theorem~4.9 in that paper are
not used here; the argument below relies directly on Lemma~4.11 and tail
summation.

Let $\tau^{(+)}$ and $\tau^{(-)}$ be the occupation variables for $X_+$ and
$X_-$.  For every $n\ge1$, modulo the endpoints of the Markov partition,
\[
  \{\tau^{(+)}=n\}=Y_{-,n+1},
  \qquad
  \{\tau^{(-)}=n\}=Y_{+,n+1}.
\]
Consequently
\[
  \Leb(\tau^{(\sigma)}=n)\sim B_\sigma n^{-2},
  \qquad
  \widetilde\mu(\tau^{(\sigma)}=n)
  \sim h_{\Leb}(0)B_\sigma n^{-2}.
\]
Summing the levels and using $\sum_{j>n}j^{-2}\sim n^{-1}$ gives
\begin{equation}
  \widetilde\mu(\tau^{(\sigma)}>n)
  \sim h_{\Leb}(0)B_\sigma n^{-1}.
  \label{eq:alpha-one-tail}
\end{equation}
Thus \textup{(H2)(c)} holds with
$c_+=h_{\Leb}(0)B_+$ and $c_-=h_{\Leb}(0)B_-$.  Since
$\tau=1+\tau^{(+)}+\tau^{(-)}$,
\[
  \int_Y\tau\,d\widetilde\mu
  =\sum_{n\ge0}\widetilde\mu(\tau>n)=\infty.
\]
The return density and its reciprocal are bounded on $Y$, so the same
nonintegrability holds with respect to Lebesgue on $Y$.  The final assertion
of \cite[Lemma~2.1]{CMT26} gives $\widetilde\mu(I)=\infty$, completing
\textup{(S4)}.

\emph{Hypotheses \textup{(H3)} and \textup{(H4)}.}
Take $\mathcal K(I)=C^1(I)$, dense in $L^1(I,m)$.  The fixed-depth inverse
branch calculation in \cite[Lemma~4.16]{CMT26} gives, for every fixed
$r\ge1$ and $\rho\in C^1(I)$,
\[
  Q_r^\rho(y)
  =\frac{\rho(x_{+,r}(y))}{(g^r)'(x_{+,r}(y))}
   +\frac{\rho(x_{-,r}(y))}{(g^r)'(x_{-,r}(y))},
\]
where $x_{\pm,r}(y)\in X_{\pm,r}$ is the unique point satisfying
$g^r x_{\pm,r}(y)=y$.
There are also constants $C_r,C_r'$ such that
\[
  \|Q_r^\rho\|_\infty\le C_r\|\rho\|_{C^1},
  \qquad
  |Q_r^\rho(y)-Q_r^\rho(y')|
  \le C_r'\|\rho\|_{C^1}|y-y'|.
\]
The case $r=0$ is $Q_0^\rho=\rho|_Y$ and satisfies the same type of bounds.
Let $s_F$ be the separation time for the return partition, let $\theta_0<1$
be a Gibbs--Markov distortion parameter for $F$, and choose
\[
  \max\{\theta_0,\Lambda_F^{-1}\}\le\theta<1,
  \qquad \Lambda_F:=\inf_Y|F'|>1.
\]
(Increasing the symbolic parameter preserves the distortion bound.)  If
$y,y'$ lie in the same return atom and
$s_F(y,y')=s\ge1$, then
\[
  |y-y'|\le \operatorname{diam}(Y)\Lambda_F^{-s}
  \le \operatorname{diam}(Y)d_\theta(y,y').
\]
If they lie in different atoms, then $s_F(y,y')=0$ and $d_\theta(y,y')=1$,
while
$|Q_r^\rho(y)-Q_r^\rho(y')|\le2\|Q_r^\rho\|_\infty$.
Thus $Q_r^\rho\in\mathcal B_\theta(Y)$ for every $r$, proving
\textup{(H3)}.  Hypothesis \textup{(H4)} imposes an extra estimate only for
$\alpha\le\tfrac12$, and is therefore vacuous at $\alpha=1$.  All required
assumptions are verified.
\end{proof}

The constants in \eqref{eq:tails} come from the two-excursion return cycle used in \cite{CLM23,CL24}, whereas \cite{CMT26} uses the one-excursion first return to $Y$.  The following lemma makes the normalization and the endpoint labels explicit.

\begin{lemma}
\label{lem:tail-constant-identification}
Let $g\in\mathfrak F_*$ and let $c_+,c_->0$ be the constants in
\textup{(H2)(c)} for the inducing scheme of \cite[Section~4.2]{CMT26},
indexed by the endpoint visited.  Scale the unique invariant measure so that
its restriction to the left inducing base $\Delta_0^-$ is the probability
$\widehat\mu$ used in \eqref{eq:tails}.  Then
\[
  c_+=C_+,
  \qquad
  c_-=C_-.
\]
\end{lemma}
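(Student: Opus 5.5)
The plan is to compute both constants from the same invariant measure $\widetilde\mu$ and compare them level by level. The two inducing schemes differ in base and normalization, but the right-going excursion from the left base and the $+1$ excursion from $Y$ are the same orbit segment.

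First, fix the normalization. Scale $\widetilde\mu$ so that $\widetilde\mu|_{\Delta_0^-}=\widehat\mu$. This is possible because the lift \eqref{eq:correct-first-return-lift} restricts to $\widehat\mu$ on the base, by the first-return structure, and the invariant measure is unique up to scaling (Lemma~\ref{lem:tower-exactness} gives ergodicity and conservativity). With this scaling, $h=d\widetilde\mu/d\Leb$ is the density appearing in Remark~\ref{rem:endpoint-conventions}. By the Lipschitz-representative argument in the proof of Lemma~\ref{lem:alpha-one-CMT}, which uses only expansion and distortion and so holds for every $\alpha\in(0,1]$ (for $\beta>1$ one can instead cite \cite[Theorem~4.9]{CMT26}), $h$ is continuous at $0$ with $h(0-)=h(0+)=h(0)>0$. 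Note that $\Delta_0^-$ contains a one-sided neighborhood of $0$ inside $Y_-$, so $h$ on $\Delta_0^-$ near $0-$ coincides with the $Y$-density near $0-$.

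Second, identify the tail sets near $0$. Take the right-going leg of the CLM return cycle from $x\in\Delta_0^-$. Under the convention of Proposition~\ref{prop:excursions}, $\tau^+(x)=n$ means $x\in\delta_n^-$ (the first $n$ iterates after $x$ lie in $I_+$, with $g^nx\in\Delta_0^+$). Meanwhile $\{\tau^{(+)}=n\}=Y_{-,n+1}$ by the table in Remark~\ref{rem:endpoint-conventions}. Using the cylinder comparisons \eqref{eq:cylinder-comparison-minus-a}--\eqref{eq:cylinder-comparison-minus-b}, together with the fact that both $\delta_n^-$ and $Y_{-,n+1}$ shrink to $0-$, I will show that they differ only by an index shift of at most one and by sets of comparable size. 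Any index shift of one leaves the asymptotics $\sim Bn^{-\alpha}$ unchanged. The points of $\delta_n^-$ outside $Y$ near $0$ are absent for large $n$, because $\delta_n^-\subset U_{0-}\cap Y$ once $n\ge n_-$.

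Third, compute $C_+$ by direct summation. For large $n$, the set $\{\tau^+>n\}$ is a one-sided interval $(-\eta_n,0)$ with $\Leb(\eta_n)$ equal to $\sum_{j>n}|\delta_j^-|$. Using the same Lemma~4.11 asymptotics $|Y_{-,j}|\sim\alpha B_+j^{-1-\alpha}$ (the $\alpha=1$ case was written above), one gets $\Leb(\tau^+>n)\sim B_+n^{-\alpha}$. Continuity of $h$ at $0-$ then gives $\widehat\mu(\tau^+>n)\sim h(0-)B_+n^{-\alpha}$, so $C_+=h(0-)B_+=h(0)B_+$. The same summation on the CMT side gives $c_+=h(0)B_+$, as in \eqref{eq:alpha-one-tail}. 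The left endpoint is handled symmetrically: $\tau^-$ is the second leg of the CLM cycle, from $g^m x\in\Delta_0^+$. Its tail under $\widehat\mu$ equals the $\widetilde\mu$-measure of the corresponding set in $\Delta_0^+$, by invariance of the pushforward of the level-$m$ layer. That measure is $h(0+)B_-n^{-\alpha}$, so $C_-=c_-$.

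The main obstacle is the left constant. Here $\widehat\mu(\tau^->n)$ is a measure on $\Delta_0^-$ of points whose second leg is long, so it must be transported to $\Delta_0^+$. The plan is to use $G$-invariance and the Kakutani lift: each point of $\Delta_0^+$ near $0+$ is visited exactly once per cycle, at the end of the first leg. This makes the map $x\mapsto g^{\tau^+(x)+1}x$ push $\widehat\mu$ forward to $\widetilde\mu|_{\Delta_0^+}$, modulo the cutoff discussed above. Once that identity is established, the rest is the same tail summation.
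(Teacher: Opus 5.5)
Your proposal is correct, and it reaches the identity by a different route from the paper. The paper never computes $C_\pm$ itself. It imports $C_+=h_{\Leb}(0-)B_+$ and $C_-=h_{\Leb}(0+)B_-$ from the explicit constants of \cite{CLM23} (as used in \cite[Proposition~3.1]{CL24}). It then computes $c_\pm=h_{\Leb}(0)B_\pm$ from Lemma~4.11 of \cite{CMT26}, continuity of the density at $0$, and tail summation.

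You instead rebuild the CLM tails from the lift \eqref{eq:correct-first-return-lift}. For the right endpoint you use $\{\tau^+>n\}=\bigcup_{j>n}\delta_j^-$ together with the cylinder comparisons. For the left endpoint you observe that each return cycle meets $\Delta_0^+$ exactly once, so $\Phi_*\widehat\mu=\widetilde\mu|_{\Delta_0^+}$ for the end-of-first-leg map $\Phi$. The paper's proof is shorter given the citation; yours is self-contained and does not depend on the constant formulas or density conventions of \cite{CLM23}.

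Your route also simplifies further. For $y\in Y_-$, a visit to $Y_+$ at a time $j\le n$ would send $g^{j+1}y$ into $I_-$. Hence $\{\tau^{(+)}>n\}\subset\{\tau^+>n\}\subset\{\tau^{(+)}>n-1\}$, and likewise $\{\tau^{(-)}>n\}\subset\bigcup_{j>n}\delta_j^+\subset\{\tau^{(-)}>n-1\}$. Both sides are measured by the same $\widetilde\mu$ and the \textup{(H2)(c)} tails are regularly varying. So $C_\pm=c_\pm$ follows without $B_\pm$, Lemma~4.11, or continuity of $h$.

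Two small corrections.
\begin{itemize}
\item With your stated convention ($g^nx\in\Delta_0^+$ for $x\in\delta_n^-$), the transport map is $\Phi(x)=g^{\tau^+(x)}x$. The point $g^{\tau^+(x)+1}x$ already lies in some $\Delta_k^-\subset I_-$. With the correct map, $\Phi_*\widehat\mu=\widetilde\mu|_{\Delta_0^+}$ holds exactly, so no cutoff is needed.
\item The inclusion $\delta_n^-\subset Y$ does not follow from $n\ge n_-$, because $U_{0-}$ need not be contained in $Y$. The correct reason is that points of $\Delta_0^-$ to the left of $\gamma_-$ are mapped into $(0,\gamma_+)\subset\Delta_0^+$. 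Thus $\Delta_0^-\setminus Y_-\subset\delta_1^-$, and hence $\delta_n^-\subset Y_-$ for every $n\ge2$.
\end{itemize}
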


\begin{proof}
Set
\[
  B_+=a_-^{-1/k_-}(\ell_+b_+)^{-\alpha},
  \qquad
  B_-=a_+^{-1/k_+}(\ell_-b_-)^{-\alpha}.
\]
Let $\widetilde\mu$ have the normalization in the statement and write
$h_{\Leb}=d\widetilde\mu/d\Leb$ near $0$.  Corollary~2.4 and the explicit
constants in equations~(25)--(26) and Proposition~2.6 of \cite{CLM23}, as
used in \cite[Proposition~3.1]{CL24}, give
\begin{equation}
  C_+=h_{\Leb}(0-)B_+,
  \qquad
  C_-=h_{\Leb}(0+)B_-.
  \label{eq:CL-tail-formulas}
\end{equation}
The transfer-operator argument in the proof of \cite[Theorem~4.9]{CMT26},
and its exponent-one version in Lemma~\ref{lem:alpha-one-CMT}, show that
$h_{\Leb}$ is continuous on $Y$.  Hence both one-sided values equal
$h_{\Leb}(0)$.

For the CMT first return, modulo the same null set of partition endpoints,
the index relations are
\[
  \{\tau^{(+)}=n\}=Y_{-,n+1},
  \qquad
  \{\tau^{(-)}=n\}=Y_{+,n+1},
  \qquad n\ge1.
\]
Lemma~4.11 of \cite{CMT26}, with the endpoint indexing fixed in
Remark~\ref{rem:endpoint-conventions}, therefore gives
\[
  \Leb(\tau^{(\sigma)}=n)
  \sim \alpha B_\sigma n^{-1-\alpha}.
\]
The level intervals shrink to $0-$ for $\sigma=+$ and to $0+$ for
$\sigma=-$, so continuity of the density yields
\[
  \widetilde\mu(\tau^{(\sigma)}=n)
  \sim h_{\Leb}(0)\alpha B_\sigma n^{-1-\alpha}.
\]
Direct summation,
$\sum_{j>n}\alpha j^{-1-\alpha}\sim n^{-\alpha}$, gives
\[
  \widetilde\mu(\tau^{(\sigma)}>n)
  \sim h_{\Leb}(0)B_\sigma n^{-\alpha}.
\]
Thus $c_\sigma=h_{\Leb}(0)B_\sigma$, and comparison with
\eqref{eq:CL-tail-formulas} proves the claim.  If the density is instead
written relative to $m=\Leb/2$, it becomes $h_m=2h_{\Leb}$ while the level
measure becomes $dm=d\Leb/2$; their product, and hence the tail constants,
is unchanged.
\end{proof}


\begin{proposition}
\label{prop:SNM-balanced}
Every $g\in\mathfrak F_*$ has the strong natural-measure property \textup{(SNM)}.
\end{proposition}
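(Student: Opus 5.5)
The plan is to deduce the proposition directly from the abstract result, Theorem~\ref{thm:CMT-C}, applied with $X=I$, $f=g$, $d=2$, $\xi_1=-1$, $\xi_2=1$, and the inducing scheme of \cite[Section~4.2]{CMT26}. Then we identify the limit $\nu_{\bar p}$ with $\nu_*$. The argument splits according to the exponent. For $\beta>1$, i.e.\ $\alpha\in(0,1)$, Lemma~\ref{lem:class-compatibility} places $g$ in $\mathcal F_{\mathrm{CMT}}$ with balanced exponents $\alpha_+=\alpha_-=\alpha$. Theorem~4.9 of \cite{CMT26} then supplies (S2)--(S4) and (H1)--(H4) for this scheme. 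The remaining part of (S1), namely conservativity and exactness in the Lebesgue class, is not assumed but supplied by Lemma~\ref{lem:tower-exactness}; the Markov and generating properties of the partition \eqref{eq:CMT-partition} follow exactly as in the argument given in Lemma~\ref{lem:alpha-one-CMT}, which uses only uniform expansion of $F$. For $\beta=1$, Lemma~\ref{lem:alpha-one-CMT} verifies all of (S1)--(S4) and (H1)--(H4) directly with $\alpha=1$.

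In either case Theorem~\ref{thm:CMT-C} gives $(g^n)_*\lambda\weakstar\nu_{\bar p}$ for every probability $\lambda\ll m$. Here
\[
  \nu_{\bar p}=\frac{c_-}{c_++c_-}\delta_{-1}+\frac{c_+}{c_++c_-}\delta_{1},
\]
and $c_\pm$ are the (H2)(c) constants indexed by the endpoint visited. Care is needed to match the abstract labels $X_1,X_2$ (containing $\xi_1=-1$ and $\xi_2=1$) with the sets $X_-=[-1,\gamma_-)$ and $X_+=(\gamma_+,1]$. Remark~\ref{rem:endpoint-conventions} and \cite[equation~(4.9)]{CMT26} show that the occupation of $X_+$ is governed by the atoms $Y_{-,n+1}$, so its tail constant is $c_+$, attached to $\delta_1$; symmetrically for $X_-$.

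Next, the limit is unaffected by the normalization of the invariant measure. Rescaling $\widetilde\mu$ by a constant multiplies $c_+$ and $c_-$ by the same factor, so $\bar p$ does not change. We may therefore normalize as in Lemma~\ref{lem:tail-constant-identification}, so that $\widetilde\mu|_{\Delta_0^-}=\widehat\mu$. That lemma gives $c_\pm=C_\pm$. Hence $\bar p$ assigns weight $C_+/(C_++C_-)=p_*$ to $\delta_1$, and $\nu_{\bar p}=\nu_*$ as defined in \eqref{eq:nustar}. This yields \eqref{eq:SNM}.

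The main obstacle is bookkeeping rather than analysis. One must confirm that the hypotheses of Theorem~4.9 of \cite{CMT26} are really met by $\mathfrak F_*$; this is the content of Lemma~\ref{lem:class-compatibility}, including (F2)(2). One must also keep the sign conventions consistent across the two inducing schemes: the left-base two-excursion cycle of \cite{CLM23,CL24} and the one-excursion return to $Y$ in \cite{CMT26}. An error in either would swap $p_*$ with $1-p_*$. I would therefore spell out explicitly the chain from the label $X_k$, to the atom family $Y_{\mp,n+1}$, to the constant $c_\pm$, to $C_\pm$, to the weight on $\delta_{\pm1}$.
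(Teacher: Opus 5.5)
Your proposal is correct and follows the paper's proof. You verify (S1)--(S4) and (H1)--(H4), using Theorem~4.9 of \cite{CMT26} for $\alpha\in(0,1)$, Lemma~\ref{lem:alpha-one-CMT} for $\alpha=1$, and Lemma~\ref{lem:tower-exactness} for exactness. You then apply Theorem~\ref{thm:CMT-C} and identify the limit with $\nu_*$ via Lemma~\ref{lem:tail-constant-identification}.

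The only difference is how the standing assumptions are sourced: the paper cites \cite[Theorem~4.9]{CMT26} only for (H1)--(H4), not (S2)--(S4) as you do. It takes (S2) from \cite[Proposition~4.12]{CMT26} and (S3) from the Markov/generating/mixing argument in Lemma~\ref{lem:alpha-one-CMT}, which holds on the whole class. It takes (S4) from \cite[Theorems~A and~B]{CLM23}, where $\beta\ge1$ rules out a finite absolutely continuous invariant measure.
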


\begin{proof}
For $g\in\mathfrak F_*$ we know that $\alpha=1/\beta\in(0,1]$.  Lemma~\ref{lem:tower-exactness} verifies
\textup{(S1)}, while Lemma~\ref{lem:class-compatibility} and
\cite[Proposition~4.12]{CMT26} verify \textup{(S2)}.  The Markov,
generating, and topological-mixing argument in the proof of
Lemma~\ref{lem:alpha-one-CMT} uses only \textup{(F0)--(F2)}, and hence
verifies \textup{(S3)} throughout the balanced class, not only when
$\alpha=1$.

For \textup{(S4)}, \cite[Theorem~A]{CLM23} supplies a unique, up to scaling,
invariant absolutely continuous sigma-finite measure $\widetilde\mu$
equivalent to Lebesgue.  Since $\beta\ge1$, \cite[Theorem~B]{CLM23} implies
that this measure cannot have finite total mass: otherwise its normalization
would be an invariant absolutely continuous probability measure, contrary to
that theorem and the uniqueness in Theorem~A.  Consequently
$\widetilde\mu(I)=\infty$, and \textup{(S4)} holds for every
$g\in\mathfrak F_*$.  Thus \textup{(S1)--(S4)} are verified on the whole
balanced class.

If $\alpha\in(0,1)$, Theorem~4.9 of \cite{CMT26} verifies
\textup{(H1)--(H4)}.  If $\alpha=1$, they are verified in
Lemma~\ref{lem:alpha-one-CMT}.  Theorem~\ref{thm:CMT-C} therefore gives, for
every probability $\lambda\ll m$,
\[
  (g^n)_*\lambda\weakstar
  \frac{c_+}{c_++c_-}\delta_1
  +\frac{c_-}{c_++c_-}\delta_{-1}.
\]
Lemma~\ref{lem:tail-constant-identification} identifies this measure with
$\nu_*$ from \eqref{eq:nustar}.  This proves the assertion.
\end{proof}

\begin{remark}
Proposition~\ref{prop:SNM-balanced} makes \textup{(SNM)} automatic in the present paper.  We nevertheless retain it as a named property in the statements below in order to separate the direct one-time input from the subsequent variance arguments.  The global--local theorem of \cite{CM25} has a different set of standing assumptions and is not needed for Proposition~\ref{prop:SNM-balanced}.
\end{remark}

The following formulation of \textup{(SNM)} is useful.

\begin{lemma}
\label{lem:mix-equivalence}
The strong natural-measure property \textup{(SNM)} is equivalent to
\begin{equation}
  \int_I \psi\,\phi\circ g^n\dd m
  \longrightarrow
  \left(\int_I\psi\dd m\right)
  \left(\int_I\phi\dd\nu_*\right)
  \label{eq:mix}
\end{equation}
for every $\psi\in L^1(m)$ and every $\phi\in C(I)$.
\end{lemma}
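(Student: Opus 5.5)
The plan is to prove the two implications separately, using only the duality between measures and functions together with the density of simple objects in $L^1(m)$ and $C(I)$.

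\emph{(SNM) implies \eqref{eq:mix}.} Fix $\phi\in C(I)$ and $\psi\in L^1(m)$. If $\psi\ge0$ and $\int\psi\dd m>0$, set $\lambda=\psi\,m/\int\psi\dd m$. This is a probability with $\lambda\ll m$, and
\[
  \int_I\psi\,\phi\circ g^n\dd m
  =\Bigl(\int_I\psi\dd m\Bigr)\int_I\phi\dd(g^n)_*\lambda.
\]
By \eqref{eq:SNM}, the right-hand side tends to $(\int\psi\dd m)(\int\phi\dd\nu_*)$. If $\int\psi\dd m=0$ with $\psi\ge0$, both sides vanish. A general $\psi\in L^1(m)$ is the difference $\psi^+-\psi^-$, and both sides of \eqref{eq:mix} are linear in $\psi$, so the case $\psi\ge0$ suffices. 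No approximation is needed, since the change of variables $\int\phi\dd(g^n)_*\lambda=\int\phi\circ g^n\dd\lambda$ holds exactly for bounded Borel $\phi$ and measurable $g$.

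\emph{\eqref{eq:mix} implies (SNM).} Let $\lambda\ll m$ be a probability and put $\psi=d\lambda/dm\in L^1(m)$, so that $\int\psi\dd m=1$. Then for every $\phi\in C(I)$,
\[
  \int_I\phi\dd(g^n)_*\lambda=\int_I\psi\,\phi\circ g^n\dd m\longrightarrow\int_I\phi\dd\nu_*.
\]
This is exactly weak-star convergence on the compact space $I$, with the limit identified as $\nu_*$ from \eqref{eq:nustar}.

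There is no genuine obstacle here. The only points requiring care are measure-theoretic. First, $\phi\circ g^n$ is bounded and Borel, so every integral is defined; this is because $g$ is piecewise continuous, with the value $g(0)$ affecting only a null set. Second, the pushforward identity must be applied to a measure that is absolutely continuous but not invariant. Both are routine, so the equivalence is essentially the Radon--Nikodym correspondence between probabilities $\lambda\ll m$ and normalized nonnegative $L^1(m)$ densities.
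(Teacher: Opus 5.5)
Your proposal is correct and follows the paper's proof essentially step for step. You normalize a nonnegative $\psi$ to a probability density, treat the zero-mass case separately, and extend via $\psi=\psi^+-\psi^-$; for the converse you take $\psi=d\lambda/dm$, exactly as the paper does.
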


\begin{proof}
Assume the strong natural-measure property \textup{(SNM)}.  If $\psi\ge0$ and $\int\psi\dd m>0$, define the probability measure
\[
  \dd\lambda=\frac{\psi}{\int\psi\dd m}\dd m.
\]
Then \textup{(SNM)}, tested against $\phi$, gives the displayed correlation limit.  The case $\int\psi\dd m=0$ is trivial, and a general real-valued $\psi\in L^1(m)$ follows by applying the argument to its positive and negative parts.  Complex-valued functions follow by separating real and imaginary parts.

Conversely, suppose the displayed correlation limit holds.  If $\lambda\ll m$ is a probability with density $\psi=\dd\lambda/\dd m$, then
\[
  \int_I\phi\dd (g^n)_*\lambda
  =\int_I\psi\,\phi\circ g^n\dd m
  \longrightarrow\int_I\phi\dd\nu_*
\]
for every $\phi\in C(I)$, which is \textup{(SNM)}.
\end{proof}

For a probability $\lambda\ll m$, define the annealed sparse empirical measure
\[
  \overline\mu_{n,\lambda}^\mathbf{a}
  :=\int_I\mu_n^\mathbf{a}(x)\dd\lambda(x)
  =\frac1n\sum_{k=0}^{n-1}(g^{a_k})_*\lambda.
\]

\begin{maintheorem}
\label{thm:annealed}
Assume the strong natural-measure property \textup{(SNM)}.  Let $\mathbf{a}=(a_k)$ be any strictly increasing sequence of nonnegative integers and let $\lambda\ll m$ be a probability measure.  Then
\[
  \overline\mu_{n,\lambda}^\mathbf{a}\weakstar\nu_*.
\]
\end{maintheorem}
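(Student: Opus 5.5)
The plan is to reduce the claim to a Ces\`aro statement for a convergent scalar sequence, one test function at a time. Fix $\phi\in C(I)$ and put
\[
  s_j=\int_I\phi\,d(g^{j})_*\lambda=\int_I\phi\circ g^{j}\,d\lambda .
\]
By \textup{(SNM)}, $s_j\to\int_I\phi\,d\nu_*=:s$ as $j\to\infty$. By linearity of integration against the averaged pushforwards,
\[
  \int_I\phi\,d\overline\mu^{\mathbf a}_{n,\lambda}=\frac1n\sum_{k=0}^{n-1}s_{a_k}.
\]
It is therefore enough to show that these averages converge to $s$ for every $\phi\in C(I)$. That gives weak-star convergence by definition, since the test functions range over all of $C(I)$.

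The key observation is that $(a_k)$ is strictly increasing with integer values, so $a_k\ge k$ and $a_k\to\infty$. Hence $(s_{a_k})_k$ is a subsequence of the convergent sequence $(s_j)$ and also converges to $s$.

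The elementary Ces\`aro lemma then finishes the argument. Given $\eps>0$, choose $K$ such that $|s_{a_k}-s|<\eps$ for $k\ge K$. We have the uniform bound $|s_{a_k}-s|\le 2\|\phi\|_\infty$, so for $n>K$,
\[
  \Bigl|\frac1n\sum_{k<n}s_{a_k}-s\Bigr|\le \frac{2K\|\phi\|_\infty}{n}+\eps,
\]
which is below $2\eps$ for all large $n$.

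There is no real obstacle here; the only point needing care is the interchange defining $\overline\mu^{\mathbf a}_{n,\lambda}$ as a finite average of pushforwards, which is immediate from Fubini for finite sums. One might remark that no rate or mixing beyond one-time convergence is needed, and that $\lambda\ll m$ is used only through \textup{(SNM)}. Equivalently, via Lemma~\ref{lem:mix-equivalence}, with $\psi=d\lambda/dm$ the correlations $\int\psi\,\phi\circ g^{a_k}\,dm$ converge to $\int\phi\,d\nu_*$, and averaging in $k$ gives the conclusion.
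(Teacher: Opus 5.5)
Your proposal is correct and is essentially the paper's own proof: fix $\phi\in C(I)$, use \textup{(SNM)} along the subsequence $a_k\to\infty$ to get $\int_I\phi\,d(g^{a_k})_*\lambda\to\int_I\phi\,d\nu_*$, and conclude by Ces\`aro averaging. Your explicit $\eps$--$K$ estimate just spells out the Ces\`aro step that the paper leaves implicit.
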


\begin{proof}
Fix $\phi\in C(I)$.  Since $a_k\to\infty$, \textup{(SNM)} gives
\[
  \int_I\phi\dd(g^{a_k})_*\lambda
  \longrightarrow
  \int_I\phi\dd\nu_*.
\]
Ces\`aro averaging therefore yields
\[
  \int_I\phi\dd\overline\mu_{n,\lambda}^\mathbf{a}
  =\frac1n\sum_{k=0}^{n-1}
    \int_I\phi\dd(g^{a_k})_*\lambda
  \longrightarrow
  \int_I\phi\dd\nu_*.
\]
This is weak-star convergence.
\end{proof}

\begin{remark}
Theorem~\ref{thm:annealed} is an averaged statement over initial conditions.  It does not imply that $\mu_n^\mathbf{a}(x)$ converges for almost every individual $x$.  A sufficient route to the latter conclusion is a fluctuation estimate, treated next.
\end{remark}

\section{Almost-sure sparse convergence}
\label{sec:strong-law}

\subsection{A variance criterion}

For bounded measurable random variables $F,H$ on $(I,m)$, write
\[
  \Cov_m(F,H)=\int_I FH\dd m-
  \left(\int_I F\dd m\right)
  \left(\int_I H\dd m\right).
\]

\begin{maintheorem}
\label{thm:variance}
Let $f:I\to I$ be measurable, let $\mathbf{a}=(a_k)$ be a strictly increasing sequence of nonnegative integers, and let $\nu$ be a Borel probability measure on $I$.  Let $\mathcal D\subset C(I)$ be countable and dense in the uniform norm.  Suppose that for every $\phi\in\mathcal D$ the following hold:
\begin{enumerate}[label=\textnormal{(V\arabic*)}]
\item
\begin{equation}
  \frac1n\sum_{k=0}^{n-1}\int_I\phi\circ f^{a_k}\dd m
  \longrightarrow\int_I\phi\dd\nu.
\end{equation}
\item there are constants $C_\phi<\infty$ and $\delta_\phi>0$ such that
\begin{equation}
  \int_I\left|
    \sum_{k=0}^{n-1}
    \left(\phi\circ f^{a_k}-\int_I\phi\circ f^{a_k}\dd m\right)
  \right|^2\dd m
  \le C_\phi n^{2-\delta_\phi}.
\end{equation}
for all $n\ge1$.
\end{enumerate}
Then, for $m$-almost every $x$,
\[
  \frac1n\sum_{k=0}^{n-1}\delta_{f^{a_k}x}\weakstar\nu.
\]
\end{maintheorem}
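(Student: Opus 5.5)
The plan is the standard lacunary-subsequence (Chebyshev plus Borel--Cantelli) argument, applied one observable at a time. It is then upgraded to weak-star convergence through the countable dense family $\mathcal D$.

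\emph{Step 1: almost-sure convergence along a polynomial subsequence.} Fix $\phi\in\mathcal D$. Write
\[
  S_n=\sum_{k=0}^{n-1}\bigl(\phi\circ f^{a_k}-\textstyle\int\phi\circ f^{a_k}\dd m\bigr).
\]
Choose an integer $p$ with $p\delta_\phi>1$ and set $n_j=j^p$. Chebyshev's inequality and (V2) give
\[
  m\bigl(|S_{n_j}|>\eps n_j\bigr)\le \eps^{-2}C_\phi n_j^{-\delta_\phi}=\eps^{-2}C_\phi j^{-p\delta_\phi},
\]
which is summable in $j$. Applying Borel--Cantelli over a countable sequence $\eps=1/q$ gives $S_{n_j}/n_j\to0$ for $m$-a.e.\ $x$. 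By (V1), the centering terms satisfy $n^{-1}\sum_{k<n}\int\phi\circ f^{a_k}\dd m\to\int\phi\dd\nu$. Hence $n_j^{-1}\sum_{k<n_j}\phi(f^{a_k}x)\to\int\phi\dd\nu$ almost everywhere.

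\emph{Step 2: filling the gaps.} This is the only step requiring care. Because $\phi$ is bounded, for $n_j\le n<n_{j+1}$ we have
\[
  \Bigl|\tfrac1n\textstyle\sum_{k<n}\phi(f^{a_k}x)-\tfrac1{n_j}\sum_{k<n_j}\phi(f^{a_k}x)\Bigr|
  \le 2\|\phi\|_\infty\,\frac{n_{j+1}-n_j}{n_j}.
\]
Since $n_{j+1}/n_j=(1+1/j)^p\to1$, this bound tends to $0$. Therefore the full sequence of sampled averages converges to $\int\phi\dd\nu$ almost everywhere. Notably, no monotonicity or nonnegativity of $\phi$ is needed, because the terms are uniformly bounded. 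This is exactly why polynomial rather than geometric blocks are used: the power saving $n^{-\delta_\phi}$ is summable along $j^p$.

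\emph{Step 3: weak-star convergence.} Intersect the countably many full-measure sets obtained for $\phi\in\mathcal D$. For $x$ in this intersection and an arbitrary $\psi\in C(I)$, pick $\phi\in\mathcal D$ with $\|\psi-\phi\|_\infty<\eps$. Both $\mu_n^{\mathbf a}(x)$ and $\nu$ are probability measures, so
\[
  \limsup_n\Bigl|\int\psi\dd\mu_n^{\mathbf a}(x)-\int\psi\dd\nu\Bigr|\le2\eps.
\]
Letting $\eps\to0$ gives convergence for every continuous $\psi$, which is weak-star convergence of $\frac1n\sum_{k<n}\delta_{f^{a_k}x}$ to $\nu$.

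I do not expect a genuine obstacle. The only point to handle is choosing $p$ depending on $\delta_\phi$, which is harmless since each $\phi$ is treated separately before the countable intersection. Measurability of $f$ suffices, because all the functions involved are bounded and measurable.
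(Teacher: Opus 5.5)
Your proposal is correct and follows essentially the same route as the paper: Chebyshev plus Borel--Cantelli along a polynomial subsequence ($n_q=\lfloor q^r\rfloor$ with $r>\max\{1,1/\delta_\phi\}$ in the paper), gap-filling using boundedness and $n_{q+1}/n_q\to1$, then a countable intersection over $\mathcal D$ and a density argument. The differences are cosmetic: you use an integer exponent and interpolate the uncentered averages (constant $2\|\phi\|_\infty$), whereas the paper interpolates the centered sums $S_n$ (constant $4\|\phi\|_\infty$).
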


\begin{proof}
Fix $\phi\in\mathcal D$ and set
\[
  Y_k=\phi\circ f^{a_k}-\int_I\phi\circ f^{a_k}\dd m,
  \qquad
  S_n=\sum_{k=0}^{n-1}Y_k.
\]
Choose $r>\max\{1,1/\delta_\phi\}$ and put $n_q=\lfloor q^r\rfloor$.  Since $r>1$, this integer sequence is strictly increasing for all sufficiently large $q$.  Throughout the remainder of the argument we restrict to this cofinite range, which does not affect any limit.  By Chebyshev's inequality and \textup{(V2)}, for every $\eta>0$,
\[
  m\left(\left|S_{n_q}\right|>\eta n_q\right)
  \le \frac{C_\phi}{\eta^2 n_q^{\delta_\phi}}.
\]
Since $r\delta_\phi>1$, the right-hand side is summable in $q$.  Applying Borel--Cantelli for $\eta=1/\ell$, $\ell\in\mathbb N$, and intersecting the resulting full-measure sets gives
\[
  \frac{S_{n_q}(x)}{n_q}\longrightarrow0
\]
for almost every $x$.

The summands satisfy $|Y_k|\le2\|\phi\|_\infty$.  If $n_q\le n<n_{q+1}$, then
\[
\begin{aligned}
  \left|\frac{S_n}{n}-\frac{S_{n_q}}{n_q}\right|
  &\le \frac{|S_n-S_{n_q}|}{n}
     +|S_{n_q}|\left|\frac1n-\frac1{n_q}\right|\\
  &\le 4\|\phi\|_\infty
     \frac{n_{q+1}-n_q}{n_q}.
\end{aligned}
\]
Because $r>1$, $(n_{q+1}-n_q)/n_q\to0$.  Hence $S_n(x)/n\to0$ almost surely.  Combining this with \textup{(V1)} gives
\[
  \frac1n\sum_{k=0}^{n-1}\phi(f^{a_k}x)
  \longrightarrow\int_I\phi\dd\nu
\]
for almost every $x$.

Intersect the resulting full-measure sets over the countable family $\mathcal D$.  If $x$ belongs to this intersection and $\phi\in C(I)$ is arbitrary, choose $\psi\in\mathcal D$ with $\|\phi-\psi\|_\infty<\eta$.  Then
\[
  \limsup_{n\to\infty}
  \left|
    \frac1n\sum_{k=0}^{n-1}\phi(f^{a_k}x)-\int_I\phi\dd\nu
  \right|
  \le2\eta.
\]
Letting $\eta\downarrow0$ proves weak-star convergence.
\end{proof}

\begin{corollary}
\label{cor:covariance}
In Theorem~\ref{thm:variance}, condition \textup{(V2)} follows if, for every $\phi\in\mathcal D$, there are $C_\phi<\infty$ and $\delta_\phi>0$ such that
\begin{equation}
  \sum_{0\le i,j<n}
  \left|
    \Cov_m\bigl(\phi\circ f^{a_i},\phi\circ f^{a_j}\bigr)
  \right|
  \le C_\phi n^{2-\delta_\phi}.
  \label{eq:covariance-sum}
\end{equation}
for all $n\ge1$.
\end{corollary}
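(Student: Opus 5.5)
The plan is to expand the square in \textup{(V2)} and bound the result termwise by the covariance sum \eqref{eq:covariance-sum}. Fix $\phi\in\mathcal D$ and, as in the proof of Theorem~\ref{thm:variance}, put
\[
  Y_k=\phi\circ f^{a_k}-\int_I\phi\circ f^{a_k}\dd m ,
\]
so that each $Y_k$ is bounded and real-valued, with $|Y_k|\le2\|\phi\|_\infty$. All products below are therefore integrable and no convergence issues arise.

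First, since $\phi$ is real-valued, the modulus squared is the ordinary square, and expanding it gives
\[
  \int_I\Bigl|\sum_{k=0}^{n-1}Y_k\Bigr|^2\dd m
  =\sum_{0\le i,j<n}\int_I Y_iY_j\dd m .
\]
Second, each $Y_k$ is the centered version of $\phi\circ f^{a_k}$, so bilinearity gives
\[
  \int_I Y_iY_j\dd m=\Cov_m\bigl(\phi\circ f^{a_i},\phi\circ f^{a_j}\bigr).
\]
Third, bound each summand by its absolute value and apply \eqref{eq:covariance-sum}. This yields
\[
  \int_I\Bigl|\sum_{k=0}^{n-1}Y_k\Bigr|^2\dd m\le C_\phi n^{2-\delta_\phi}
\]
for all $n\ge1$, with the same $C_\phi$ and $\delta_\phi$, which is exactly \textup{(V2)}.

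The argument is routine and there is no real obstacle. The only point needing care is that the same constants $C_\phi,\delta_\phi$ carry over unchanged and that the bound holds for every $n\ge1$, not merely asymptotically. Both follow directly from the hypothesis as stated.
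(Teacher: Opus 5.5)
Your proposal is correct and matches the paper's proof: the integral in \textup{(V2)} equals the double sum of the covariances without absolute values (using that $m$ is a probability measure, so $\int_I Y_iY_j\dd m$ is exactly the covariance), and that sum is bounded by the left-hand side of \eqref{eq:covariance-sum}. The paper states this in two lines, and your remarks on real-valuedness and boundedness are harmless extra detail.
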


\begin{proof}
The variance in \textup{(V2)} is the sum of the covariances without absolute values.  Its absolute value is bounded by the left-hand side of \eqref{eq:covariance-sum}.
\end{proof}

\subsection{Existence of arbitrarily sparse statistical sampling}

The next theorem shows that the strong natural-measure property always permits a deterministic pointwise-convergent sampling sequence, although it does not identify a prescribed sequence such as $2^k$.

\begin{maintheorem}
\label{thm:selection}
Assume the strong natural-measure property \textup{(SNM)}, and let $(b_k)_{k\ge0}$ be any sequence of nonnegative integers.  Then there is a strictly increasing sequence $\mathbf{a}=(a_k)$ of nonnegative integers such that $a_k\ge b_k$ for every $k$ and
\[
  \mu_n^\mathbf{a}(x)\weakstar\nu_*
\]
for $m$-almost every $x$.
\end{maintheorem}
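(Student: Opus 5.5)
We construct $\mathbf a$ inductively by a diagonal selection and then verify the hypotheses of Theorem~\ref{thm:variance} through Corollary~\ref{cor:covariance}, with $f=g$ and $\nu=\nu_*$. Fix once and for all a countable uniformly dense family $\mathcal D=\{\phi_1,\phi_2,\ldots\}\subset C(I)$. Condition \textup{(V1)} is automatic for every strictly increasing $\mathbf a$: it is Theorem~\ref{thm:annealed} with $\lambda=m$. Hence the whole task is to choose $\mathbf a$ so that the covariance sum \eqref{eq:covariance-sum} is $O_\phi(n)$ for every $\phi\in\mathcal D$. The bound $O(n)$ is certainly of the form $n^{2-\delta}$ with $\delta=1$.

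The inductive step works as follows. Suppose $a_0<\cdots<a_{k-1}$ have been chosen. Consider the finite family of functions
\[
  \psi_{i,p}=\phi_p\circ g^{a_i}\in L^\infty\subset L^1(m),\qquad 0\le i<k,\ 1\le p\le k .
\]
By Lemma~\ref{lem:mix-equivalence}, for each fixed $\psi_{i,p}$ and each $\phi_p$ with $p\le k$,
\[
  \int_I\psi_{i,p}\,\phi_p\circ g^{N}\dd m-\Bigl(\int\psi_{i,p}\dd m\Bigr)\Bigl(\int\phi_p\dd\nu_*\Bigr)\to0 .
\]
Also, $\int\phi_p\circ g^N\dd m\to\int\phi_p\dd\nu_*$ by \textup{(SNM)} with $\lambda=m$. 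Combining the two limits gives
\[
  \Cov_m(\phi_p\circ g^{a_i},\phi_p\circ g^N)\to0\qquad(N\to\infty).
\]
Since only finitely many pairs $(i,p)$ are involved, we can choose
\[
  a_k>\max\{a_{k-1},b_k\}
\]
so large that $|\Cov_m(\phi_p\circ g^{a_i},\phi_p\circ g^{a_k})|\le 2^{-k}/k$ for all $i<k$ and $p\le k$. This gives $a_k\ge b_k$ and strict monotonicity.

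It remains to check the covariance bound for a fixed $\phi_p$. The diagonal terms and the terms with $\max(i,j)<p$ number at most $n+p^2$. Each such term is bounded by $4\|\phi_p\|_\infty^2$, contributing $O_p(n)$. For an off-diagonal pair with $j=\max(i,j)\ge p$, the construction at stage $j$ gives the bound $2^{-j}/j$. There are at most $j$ choices of $i<j$, so the off-diagonal total is at most $2\sum_j 2^{-j}<\infty$. Hence
\[
  \sum_{0\le i,j<n}|\Cov_m(\phi_p\circ g^{a_i},\phi_p\circ g^{a_j})|\le C_{\phi_p}\, n ,
\]
so Corollary~\ref{cor:covariance} and Theorem~\ref{thm:variance} give $\mu_n^{\mathbf a}(x)\weakstar\nu_*$ for $m$-almost every $x$.

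The main point requiring care is the diagonal bookkeeping. Each observable $\phi_p$ is only controlled from stage $p$ onward, so the finitely many early covariances must be absorbed into the constant $C_\phi$, which Theorem~\ref{thm:variance} allows to depend on $\phi$. The other subtle point is that Lemma~\ref{lem:mix-equivalence} is applied with $\psi$ depending on the already chosen times. This is legitimate only because $\psi_{i,p}$ is a fixed $L^1$ function at the moment $a_k$ is selected.
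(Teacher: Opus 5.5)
Your proposal is correct and follows essentially the same diagonal-selection argument as the paper. You choose each $a_k$ so that the finitely many covariances with earlier selected observables are summably small, absorb the finitely many early pairs into $C_\phi$, and conclude through Corollary~\ref{cor:covariance} and Theorem~\ref{thm:variance} with $\delta_\phi=1$. The one minor difference is that you take \textup{(V1)} directly from Theorem~\ref{thm:annealed} with $\lambda=m$, whereas the paper builds an extra bias condition into the selection; both routes are valid.
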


\begin{proof}
Choose a countable uniformly dense family
\[
  \mathcal D=\{\phi_0,\phi_1,\phi_2,\ldots\}\subset C(I).
\]
Set $a_{-1}=-1$ and construct $a_k$ inductively.  Suppose $a_0<\cdots<a_{k-1}$ have been chosen.  By Lemma~\ref{lem:mix-equivalence}, for each fixed $0\le r\le k$ and $0\le i<k$,
\[
  \Cov_m\bigl(\phi_r\circ g^{a_i},\phi_r\circ g^n\bigr)
  \longrightarrow0
  \qquad(n\to\infty).
\]
Indeed, apply \eqref{eq:mix} with
$\psi=\phi_r\circ g^{a_i}$ and $\phi=\phi_r$.  The first term in the
covariance converges to
$(\int_I\psi\dd m)(\int_I\phi_r\dd\nu_*)$, while \textup{(SNM)} applied to
$m$ gives
$\int_I\phi_r\circ g^n\dd m\to\int_I\phi_r\dd\nu_*$.  Subtracting the two
limits gives the asserted covariance convergence.  Also,
\[
  \int_I\phi_r\circ g^n\dd m
  \longrightarrow\int_I\phi_r\dd\nu_*.
\]
There are only finitely many conditions at stage $k$.  We may therefore choose
$a_k\in\N_0$ with
\[
  a_k>\max\{a_{k-1},b_k\}
\]
so large that
\begin{align}
  \left|
    \int_I\phi_r\circ g^{a_k}\dd m-
    \int_I\phi_r\dd\nu_*
  \right|&\le2^{-(k+1)}
    &&(0\le r\le k),\label{eq:selection-bias}\\
  \left|
    \Cov_m\bigl(\phi_r\circ g^{a_i},
                 \phi_r\circ g^{a_k}\bigr)
  \right|&\le2^{-(k+1)}
    &&(0\le r\le k,\ 0\le i<k).\label{eq:selection-cov}
\end{align}
For $k=0$, only the covariance family is empty.

Fix $r$.  Equation~\eqref{eq:selection-bias}, apart from finitely many initial terms, implies \textup{(V1)} with $\nu=\nu_*$.  For every pair whose later index is $j\ge r$, condition \eqref{eq:selection-cov} gives the bound $2^{-(j+1)}$; the finitely many pairs with later index $j<r$ are absorbed into a constant $C_r$.  The diagonal covariances are at most $\|\phi_r\|_\infty^2$, and therefore
\[
\begin{aligned}
  \sum_{0\le i,j<n}
  \left|\Cov_m(\phi_r\circ g^{a_i},\phi_r\circ g^{a_j})\right|
  &\le C_r+n\|\phi_r\|_\infty^2
    +2\sum_{j=r}^{n-1}j2^{-(j+1)}\\
  &\le C_r' n.
\end{aligned}
\]
Thus Corollary~\ref{cor:covariance} applies with $\delta_{\phi_r}=1$.  Theorem~\ref{thm:variance} yields the desired almost-sure weak-star convergence.
\end{proof}

\begin{remark}
The lower bounds $b_k$ are arbitrary.  For example, one may impose $a_k\ge2^{2^k}$, or any faster prescribed growth.  Thus sufficiently sparse deterministic observation can restore pointwise statistical convergence, even though every block-dominating sequence preserves non-convergence.
\end{remark}

\subsection{A dyadic criterion}

The following is an application of the abstract variance theorem.  It is not
an unconditional statement for every map in $\mathfrak F_*$; however,
Proposition~\ref{prop:g1-dyadic-example} in
Appendix~\ref{app:g1-dyadic} verifies its hypothesis for the explicit map
$g_1$.

\begin{corollary}
\label{cor:dyadic}
Assume the strong natural-measure property \textup{(SNM)}.  Let $\mathcal D\subset C(I)$ be countable and uniformly dense.  Suppose that for every $\phi\in\mathcal D$ there are constants $C_\phi<\infty$ and $\delta_\phi>0$ such that
\begin{equation}
  \sum_{0\le i,j<n}
  \left|
    \Cov_m\bigl(\phi\circ g^{2^i},
                 \phi\circ g^{2^j}\bigr)
  \right|
  \le C_\phi n^{2-\delta_\phi}.
  \label{eq:dyadic-covariance}
\end{equation}
for all $n\ge1$.  Then
\[
  \mu_n^{\mathrm{dyad}}(x)\weakstar\nu_*
\]
for $m$-almost every $x$, and hence $m(X_{\mathrm{dyad}})=0$.
\end{corollary}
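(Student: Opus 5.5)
The plan is to apply Theorem~\ref{thm:variance} with $f=g$, $\nu=\nu_*$, and the dyadic sequence $a_k=d_k=2^k$. This sequence is strictly increasing and consists of nonnegative integers, so it is an admissible sampling sequence. The family $\mathcal D$ given in the statement serves as the countable uniformly dense family required there. It then remains to verify \textup{(V1)} and \textup{(V2)} for each $\phi\in\mathcal D$.

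For \textup{(V1)}, we use \textup{(SNM)} with $\lambda=m$, which is a probability absolutely continuous with respect to itself. This gives $\int_I\phi\circ g^{2^k}\dd m=\int_I\phi\dd(g^{2^k})_*m\to\int_I\phi\dd\nu_*$ as $k\to\infty$, since $2^k\to\infty$. Ces\`aro averaging preserves this limit, which yields \textup{(V1)}. Alternatively, this is exactly Theorem~\ref{thm:annealed} with $\lambda=m$, tested against $\phi$.

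For \textup{(V2)}, we invoke Corollary~\ref{cor:covariance}. Its hypothesis \eqref{eq:covariance-sum} with $a_i=2^i$ is literally the assumed bound \eqref{eq:dyadic-covariance}, with the same constants $C_\phi$ and $\delta_\phi$. Hence \textup{(V2)} holds. Theorem~\ref{thm:variance} then gives $\mu_n^{\mathrm{dyad}}(x)\weakstar\nu_*$ for $m$-almost every $x$.

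Finally, every such $x$ has a convergent sequence of sparse empirical measures, so it lies outside $X_{\mathrm{dyad}}$, and therefore $m(X_{\mathrm{dyad}})=0$. There is no real obstacle in this argument. The only point to check is that the covariance bound is used exactly in the form required by Corollary~\ref{cor:covariance}, with absolute values inside the double sum; the hypothesis \eqref{eq:dyadic-covariance} provides this directly. The substantive work, namely verifying \eqref{eq:dyadic-covariance} for a concrete map such as $g_1$, lies outside this corollary.
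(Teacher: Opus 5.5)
Your proposal is correct and follows the paper's proof exactly: (V1) from (SNM) applied to $\lambda=m$ along $2^k\to\infty$ plus Ces\`aro averaging, (V2) from the assumed bound via Corollary~\ref{cor:covariance}, and then Theorem~\ref{thm:variance} with $\nu=\nu_*$. The final step, $m(X_{\mathrm{dyad}})=0$, is immediate, as you note.
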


\begin{proof}
The strong natural-measure property \textup{(SNM)} implies
\[
  \int_I\phi\circ g^{2^k}\dd m
  \longrightarrow\int_I\phi\dd\nu_*.
\]
Therefore \textup{(V1)} follows by Ces\`aro averaging for every $\phi\in C(I)$.  Condition~\eqref{eq:dyadic-covariance} implies \textup{(V2)} by Corollary~\ref{cor:covariance}.  Apply Theorem~\ref{thm:variance}.
\end{proof}

\begin{corollary}
\label{cor:gap}
In Corollary~\ref{cor:dyadic}, condition \eqref{eq:dyadic-covariance} holds with $\delta_\phi=1$ if, for every $\phi\in\mathcal D$, there is a summable sequence $r_\phi:\N_0\to[0,\infty)$ such that
\[
  \left|
    \Cov_m\bigl(\phi\circ g^{2^i},
                 \phi\circ g^{2^j}\bigr)
  \right|
  \le r_\phi(j-i)
  \qquad(0\le i\le j).
\]
\end{corollary}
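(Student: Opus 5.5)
The bound is a direct count of the double sum in \eqref{eq:dyadic-covariance} by the gap $d=|j-i|$. Fix $\phi\in\mathcal D$ and write $\Sigma_\phi=\sum_{d\ge0}r_\phi(d)<\infty$. Covariance is symmetric in its two arguments, so for $i>j$ we have
\[
  \bigl|\Cov_m(\phi\circ g^{2^i},\phi\circ g^{2^j})\bigr|
  =\bigl|\Cov_m(\phi\circ g^{2^j},\phi\circ g^{2^i})\bigr|
  \le r_\phi(i-j).
\]
Hence the hypothesis, stated only for $i\le j$, controls every ordered pair $(i,j)$ with $0\le i,j<n$ by $r_\phi(|i-j|)$.

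Next we group the pairs by their gap. For $d=0$ there are exactly $n$ pairs. For each $1\le d\le n-1$ there are $2(n-d)\le 2n$ ordered pairs with $|i-j|=d$. Therefore
\[
  \sum_{0\le i,j<n}
  \bigl|\Cov_m(\phi\circ g^{2^i},\phi\circ g^{2^j})\bigr|
  \le n\,r_\phi(0)+2n\sum_{d=1}^{n-1}r_\phi(d)
  \le 2\Sigma_\phi\,n .
\]

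This is $C_\phi n^{2-\delta_\phi}$ with $C_\phi=2\Sigma_\phi$ and $\delta_\phi=1$, valid for all $n\ge1$, which is \eqref{eq:dyadic-covariance} as claimed. Corollary~\ref{cor:dyadic} then applies without change. No step is a genuine obstacle. The only points to watch are that the hypothesis covers just $i\le j$, which the symmetry of covariance handles, and that the diagonal term $r_\phi(0)$ is finite, which holds because $r_\phi$ is summable and nonnegative.
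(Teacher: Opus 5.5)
Your proof is correct and is essentially the paper's argument: both group the double sum by the gap $|i-j|$ and bound it by $n\,r_\phi(0)+2\sum_{h=1}^{n-1}(n-h)r_\phi(h)\le C_\phi n$. Your explicit use of the symmetry of the covariance to cover the pairs with $i>j$ spells out a step that the paper leaves implicit.
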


\begin{proof}
We have
\[
  \sum_{0\le i,j<n}|\Cov_m(\cdot,\cdot)|
  \le nr_\phi(0)+2\sum_{h=1}^{n-1}(n-h)r_\phi(h)
  \le n\left(r_\phi(0)+2\sum_{h\ge1}r_\phi(h)\right).
\]
\end{proof}

\begin{remark}
\label{rem:dyadic-status}
The natural-measure theorem gives the one-time limits needed for \textup{(V1)}, but it does not imply \eqref{eq:dyadic-covariance}.  To obtain \eqref{eq:dyadic-covariance}, one needs a two-time estimate that is uniform as the earlier observation time varies.  A bounded-distortion statement for complete tower cylinders does not supply such a bound after conditioning on an arbitrary past event: a measurable cut of a cylinder can produce a pushforward density with a variation norm that is not controlled by its mass.  Consequently, the one-time operator-renewal and global--local mixing results cited above cannot be used as a substitute for \eqref{eq:dyadic-covariance} without an additional uniform argument.

Proposition~\ref{prop:g1-dyadic-example} supplies the additional uniform
complete-branch argument for $g_1$.  Verification of
\eqref{eq:dyadic-covariance} for every $g\in\mathfrak F_*$ is not established
here.
\end{remark}

\section{Conclusion}
\label{sec:conclusion}

The map-specific conclusions are now separated from the abstract sparse
strong-law machinery.  First, ordinary non-statistical behavior is robust
under every block-dominating deterministic observation scheme.  The proof
uses full-measure sets of genuinely dominating excursions on the two inducing
bases and a precise saturation/first-entrance transfer to the whole interval.
Positive-density, arithmetic, polynomial, and finite-index regularly varying
sampling all lie in this persistent regime.

Second, the strong natural-measure property holds for the whole balanced
class, including the exponent-one boundary.  The class inclusion is proved
by the explicit cylinder decompositions
\eqref{eq:cylinder-comparison-minus-a}--\eqref{eq:cylinder-comparison-plus-b}.
Exactness follows by applying the Markov-fibred-system criterion to an
explicit probability reference equivalent to the invariant sigma-finite
measure on the saturated tower, after verifying the branch Radon--Nikodym
Jacobian and uniform R\'enyi distortion; it then descends through the
measure-preserving factor.  The endpoint labels, density
normalizations, tower-level convention, and tail constants are reconciled
explicitly; the proof uses the level asymptotic in Lemma~4.11 of \cite{CMT26}
and direct summation rather than the conflicting displayed constants in that
source.  The symmetric polynomial family has natural measure
$\frac12(\delta_{-1}+\delta_1)$.

For $\beta>1$, the ordinary-time dimension theorem of Coates--Gelfert
\cite{CG26} complements the sparse-time conclusions above.  Their result and
Theorem~\ref{thm:CL} together give
\[
  m(\mathcal B_g(\nu_p))=0,
  \qquad
  \dim_{\mathrm H}\mathcal B_g(\nu_p)=1
  \qquad(p\in[0,1]).
\]
In particular, the distinguished mixture $\nu_*$ has a Lebesgue-null
ordinary-time basin but a basin of full Hausdorff dimension.  By contrast,
Theorem~\ref{thm:selection} chooses a single deterministic sampling sequence,
subject to arbitrary prescribed lower bounds on its growth, for which
\[
  \mu_n^\mathbf{a}(x)\weakstar\nu_*
  \qquad\text{for }m\text{-almost every }x.
\]
Thus endpoint mixtures can arise in two complementary ways: from exceptional
initial points under full-time observation, or from suitably chosen
observation schedules for Lebesgue-typical initial points.  The result of
\cite{CG26}, which assumes $\alpha\in(0,1)$, does not cover the boundary case
$\beta=1$ treated here.

The annealed theorem, variance criterion, covariance criterion, and diagonal
selection theorem are abstract consequences of one-time convergence and
second-moment estimates.  They imply annealed convergence along every
deterministic sequence and produce pointwise-convergent sampling sequences of
arbitrarily fast prescribed growth.  For an arbitrary map in
$\mathfrak F_*$, dyadic convergence remains conditional on the uniform
two-time estimate \eqref{eq:dyadic-covariance}.  In
Appendix~\ref{app:g1-dyadic}, Proposition~\ref{prop:g1-dyadic-example}
verifies this estimate on a countable uniformly dense endpoint-flat family
for the symmetric quadratic map $g_1$ and proves
almost-sure convergence to $\tfrac12(\delta_{-1}+\delta_1)$.

\appendix

\section{Example}
\label{ex:symmetric-family}
For every integer $p\ge1$, define $g_p:I\to I$ by
\[
  g_p(x)=
  \begin{cases}
    x+(1+x)^{p+1},&-1\le x\le0,\\[2mm]
    x-(1-x)^{p+1},&0<x\le1.
  \end{cases}
\]
The value $g_p(0)=1$ merely fixes a convention at the discontinuity.  On the two open branches,
\[
  g_p'(x)=
  \begin{cases}
    1+(p+1)(1+x)^p,&x<0,\\
    1+(p+1)(1-x)^p,&x>0,
  \end{cases}
\]
and
\[
  g_p''(x)=
  \begin{cases}
    p(p+1)(1+x)^{p-1}>0,&x<0,\\
    -p(p+1)(1-x)^{p-1}<0,&x>0.
  \end{cases}
\]
Hence both branches are full, $g_p'(x)>1$ away from the endpoints, and the left branch is convex while the right branch is concave.  Near the endpoints,
\[
  g_p(x)-x=(1+x)^{p+1},
  \qquad
  x-g_p(x)=(1-x)^{p+1},
\]
and $g_p'(0-)=g_p'(0+)=p+2>1$.  The finite-flight condition \textup{(A2)} also holds.  For fixed $n$, each one-sided closure $\overline{\delta_n^\pm}$ is compact: it is obtained by finitely many inverse images of compact interval closures under the continuous one-sided branch extensions.  The product derivative $(g_p^n)'$ extends continuously to this closure.  At every interior orbit point its factors are strictly larger than one.  If a boundary orbit lands on a neutral endpoint, all subsequent endpoint factors equal one, but at least one preceding factor---in particular the crossing factor when the discontinuity boundary is involved---is strictly larger than one; hence the full product remains strictly larger than one.  Thus $(g_p^n)'>1$ on the compact one-sided closure.  Only finitely many cylinders with $1\le n\le n_\pm$ occur, so their minima have a common lower bound $\lambda_A>1$; when $n_\pm=0$, the corresponding requirement is vacuous.  Thus, in the notation of \cite{CLM23,CL24},
\[
  \ell_1=\ell_2=p,
  \qquad
  k_1=k_2=1,
  \qquad
  \beta^- =\beta^+=p,
\]
so $g_p\in\mathfrak F_*$.
In particular, $g_1\in\mathfrak F_*$ and its balanced exponent is
$\beta=1$.

Under the affine change of coordinates $H(u)=2u-1$, the conjugate map $f_p=H^{-1}\circ g_p\circ H$ is
\[
  f_p(u)=
  \begin{cases}
    u+2^p u^{p+1},&0\le u\le\tfrac12,\\[2mm]
    u-2^p(1-u)^{p+1},&\tfrac12<u\le1.
  \end{cases}
\]
It is a two-neutral-fixed-point Thaler map with exponent $\alpha=1/p\in(0,1]$.  Hence \cite[Theorem~1.1]{CMT26} gives an endpoint-supported probability $\nu$ such that
\[
  (f_p^n)_*\lambda\weakstar\nu
\]
for every absolutely continuous probability measure $\lambda$ on $[0,1]$.  Let $R(u)=1-u$ and let $\lambda_0$ be normalized Lebesgue measure.  Away from the discontinuity, $f_p\circ R=R\circ f_p$; the discontinuity and its backward orbit are countable and therefore $\lambda_0$-null.  Since $\lambda_0$ is $R$-invariant, every $(f_p^n)_*\lambda_0$ is $R$-invariant, and so is its weak limit $\nu$.  The only $R$-invariant probability supported on $\{0,1\}$ is
\[
  \nu=\frac12\delta_0+\frac12\delta_1.
\]
The limit in \cite[Theorem~1.1]{CMT26} is the same for every absolutely continuous initial probability.  Conjugating by $H$ therefore gives
\[
  (g_p^n)_*\lambda\weakstar
  \frac12\delta_{-1}+\frac12\delta_1
  \qquad\text{for every probability }\lambda\ll m.
\]
By Proposition~\ref{prop:SNM-balanced} and Lemma~\ref{lem:tail-constant-identification}, this common limit is the measure $\nu_*$ defined from the Coates--Luzzatto tail constants.  Hence $p_*=1/2$, and \eqref{eq:nustar} gives $C_+=C_-$ for this family.

\section{A verified dyadic example}
\label{app:g1-dyadic}

We now verify the covariance hypothesis of Corollary~\ref{cor:dyadic} for a
concrete member of the balanced class.  By
Example~\ref{ex:symmetric-family}, the map $g_1$ below belongs to
$\mathfrak F_*$, has $\beta=1$, and has natural measure
$\nu_*=\tfrac12(\delta_{-1}+\delta_1)$.

\begin{lemma}
\label{lem:g1-critical-renewal}
Let $g=g_1$ be the map in Example~\ref{ex:symmetric-family}, and put
\[
  a=2-\sqrt3,
  \qquad Y=[-a,a].
\]
There is a symmetric, $g$-invariant, sigma-finite measure $\mu\sim m$,
uniquely normalized by $\mu(Y)=1$.  Let $L$ be the transfer operator with
respect to $\mu$, let
\[
  \tau(y)=\inf\{n\ge1:g^ny\in Y\},
  \qquad F=g^\tau:Y\to Y,
\]
where $\inf\varnothing=\infty$; the exceptional infinite-return set is
$\mu$-null and is omitted when defining $F$.  Define, on
$\mathcal B=\operatorname{BV}(Y)$,
\[
  R_nw=1_YL^n(1_{\{\tau=n\}}w),
  \qquad
  T_nw=1_YL^n(1_Yw).
\]
We use the norm
$\|w\|_{\mathcal B}=\|w\|_{L^1(\mu)}+\Var_Y(w)$ and, when $z$ is
complex, the complexification of this space.
Then
\begin{align}
  \mu(\tau>n)&\sim \frac{c_\tau}{n}
       &&\text{for some $c_\tau>0$,}                                  \label{eq:g1-return-tail}\\
  \|R_n\|_{\mathcal B\to\mathcal B}&\le C(n+1)^{-2},                 \label{eq:g1-Rn-bound}\\
  \|T_n\|_{\mathcal B\to\mathcal B}&\le \frac{C}{\log(n+2)}.       \label{eq:g1-Tn-bound}
\end{align}
Let $\mathcal Jw(x)=w(-x)$ and
$\mathcal B_{\rm o}=\{w\in\mathcal B:\mathcal Jw=-w\}$.  For every
$\eta\in(0,1)$,
\begin{equation}
  \|T_n|_{\mathcal B_{\rm o}}\|_{\mathcal B\to\mathcal B}
  \le C_\eta(n+1)^{-\eta}.                                             \label{eq:g1-odd-renewal}
\end{equation}
Moreover, if $w\in\mathcal B_{\rm o}$, then
\begin{equation}
  \|L^n(1_Yw)\|_{L^1(\mu)}
  \le C(n+1)^{-1/2}\|w\|_{\mathcal B}.                                \label{eq:g1-odd-global}
\end{equation}

Let
\[
  \kappa(x)=\inf\{n\ge0:g^nx\in Y\},
  \qquad v_0=\frac{dm}{d\mu},
\]
again with $\inf\varnothing=\infty$.
Set
\[
  b_c=1_YL^c(1_{\{\kappa=c\}}v_0),
  \qquad
  U_r=1_YL^rv_0.
\]
Then
\begin{align}
  \|b_c\|_{\mathcal B}&\le C(c+1)^{-2},
  &m(\kappa>c)&\le C(c+1)^{-1},                                        \label{eq:g1-entrance-bounds}\\
  \|U_r\|_{\mathcal B}&\le \frac{C}{\log(r+2)},
  &m(g^{-r}Y)&\le \frac{C}{\log(r+2)}.                                 \label{eq:g1-base-occupation}
\end{align}
\end{lemma}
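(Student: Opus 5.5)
Here $Y=[-a,a]$ with $a=2-\sqrt3$ is the period-two orbit $\gamma_\pm=\pm a$ of $g_1$: indeed $g_1(-a)=-a+(1-a)^2=a$ is equivalent to $a^2-4a+1=0$. So $Y$ is exactly the CMT inducing base of \eqref{eq:CMT-partition}, and Lemma~\ref{lem:alpha-one-CMT} already provides the inducing structure we need: $F$ is full-branch Gibbs--Markov with Lipschitz invariant density, and $\mu=\widetilde\mu$, normalized by $\mu(Y)=1$, is the unique sigma-finite invariant measure equivalent to $m$. It is symmetric because $g_1$ commutes with $x\mapsto-x$ off a null set, so the reflection of $\mu$ is again invariant and uniqueness forces symmetry. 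The return tail \eqref{eq:g1-return-tail} is \eqref{eq:alpha-one-tail} summed over both signs, using $\tau=1+\tau^{(+)}+\tau^{(-)}$, which gives $c_\tau=2h_{\Leb}(0)B_+$.

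For \eqref{eq:g1-Rn-bound}, note that $R_n$ is a sum over the two return atoms $Y_{\pm,n}$ of weighted composition operators along inverse branches of $F$. Each term has $L^\infty$ size $O(\mu(Y_{\pm,n}))=O(n^{-2})$ by Lemma~4.11 of \cite{CMT26}, and the Gibbs--Markov distortion \eqref{eq:alpha-one-GM-distortion} bounds the variation by the same quantity times $\|w\|_{\mathcal B}$. From there the standard operator-renewal setup applies. Put $R(z)=\sum_n R_nz^n$ and $T(z)=\sum_n T_nz^n$, so that $T(z)=(I-R(z))^{-1}$ on the unit disk. $R(1)=L_F$ is quasicompact on $\operatorname{BV}$ with a simple eigenvalue $1$, since $F$ is full-branch and mixing. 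Also $\sum_n\|R_n\|<\infty$ and $\sum_{j>n}\|R_j\|=O(n^{-1})$. This is precisely the $\beta=1$ setting of Melbourne--Terhesiu \cite{MT12}, whose conclusion is $T_n=P/\bigl(\sum_{j\le n}\mu(\tau>j)\bigr)+o$, with $P$ the projection onto constants. That yields \eqref{eq:g1-Tn-bound}, because $\sum_{j\le n}\mu(\tau>j)\sim c_\tau\log n$. Aperiodicity of $R(z)$ on $|z|=1$, $z\ne1$, follows from the fact that every return time $q\ge2$ occurs.

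For the odd channel, $\mathcal J$ commutes with $L$ and with $1_{\{\tau=n\}}$, so $R(z)$ preserves $\mathcal B_{\rm o}$. On $\mathcal B_{\rm o}$ the eigenprojection $P$ vanishes: $Pw=\int w\,d\mu=0$ for odd $w$. Hence $1\notin\operatorname{spec}R(1)|_{\mathcal B_{\rm o}}$, and $(I-R(z))^{-1}$ is bounded on the closed disk when restricted to odd functions. The Taylor coefficients of an inverse of a series whose coefficients are $O(n^{-2})$ decay like $O(n^{-\eta})$ for every $\eta<1$, by a Wiener-type Banach-algebra lemma for weights $n^\eta$. This gives \eqref{eq:g1-odd-renewal}. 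For \eqref{eq:g1-odd-global}, I would use a last-exit decomposition. Write $L^n(1_Yw)=\sum_{k\le n}\widetilde L^{\,n-k}T_kw$, where $\widetilde L^j$ denotes the part of $L^j$ that avoids $Y$ after time $0$; its $L^1$ norm is at most $\mu(\tau>j)\|\cdot\|_\infty$. Summing $\min(1,(n-k)^{-1})(k+1)^{-\eta}$ over $k\le n$ gives $O(n^{-\eta}\log n)$, which already beats $n^{-1/2}$.

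The entrance estimates follow the same pattern. The set $\{\kappa=c\}$ is the union of the two outer levels $X_{\pm,c}$ of the partition, and these have Lebesgue measure $O(c^{-2})$; bounded distortion of $g^c$ on them then gives \eqref{eq:g1-entrance-bounds}. Summing gives $m(\kappa>c)=O(c^{-1})$. Next, $U_r=\sum_{c\le r}T_{r-c}b_c$, and since $\sum_c\|b_c\|<\infty$, splitting the convolution at $c=r/2$ together with \eqref{eq:g1-Tn-bound} gives $O(1/\log r)$. Finally, $m(g^{-r}Y)=\int U_r\,d\mu$. The main obstacle, in my view, is verifying the hypotheses of \cite{MT12} uniformly on $\operatorname{BV}$: the full-branch structure combined with the $O(n^{-2})$ level sizes should be enough, but making the variation bound on $R_n$ explicit requires tracking distortion along the long neutral legs.
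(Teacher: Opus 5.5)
Your proposal is correct and follows the paper's architecture: inducing on the period-two interval $Y$, BV operator renewal at exponent one via Melbourne--Terhesiu, invertibility of $I-\mathcal R(z)$ on the odd subspace over the closed disc, a last-exit decomposition for \eqref{eq:g1-odd-global}, and the first-entrance convolution $U_r=\sum_c T_{r-c}b_c$ split at $c=r/2$. The differences are confined to sub-steps, and all are valid substitutions: you get the branch BV bounds from the uniform distortion estimate \eqref{eq:alpha-one-GM-distortion} together with Lemma~4.11 of \cite{CMT26} rather than from the paper's explicit $u_n=q^{-n}(t)$ asymptotics, and you use a weighted Wiener lemma (which even yields $\sum_n n^\eta\|T_n|_{\mathcal B_{\rm o}}\|<\infty$) in place of the paper's H\"older/Fourier-shift argument; the one place you leave thin is aperiodicity, where the paper spells out the coboundary relation $v\circ F=z^\tau v$ evaluated at the fixed points of the length-$2$ and length-$3$ inverse branches to force $z=1$.
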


\begin{proof}
The choice of $a$ is characterized by
\[
  g(a)=-a,
  \qquad g(-a)=a.
\]
Thus $Y$ is exactly the symmetric period-two inducing interval in
\eqref{eq:CMT-partition}.  Its first-return atoms are the intervals
$Y_{\pm,n}$, $n\ge2$, and $g^n:Y_{\pm,n}\to Y$ is a bijection.  In
particular, $F$ is a mixing full-branch Gibbs--Markov map and return times
$2$ and $3$ both occur.  We record the additional quantitative estimates
needed below, since the signed estimates do not follow merely from the
existence of this inducing scheme.

On the positive branch, use the endpoint coordinate $u=1-x$.  As long as
the orbit remains on that branch,
\[
  1-g(1-u)=q(u),
  \qquad q(u)=u+u^2.
\]
For $t$ in the fixed compact interval
$J=[1-a,1+a]\subset(0,2)$, write
\[
  u_n(t)=q^{-n}(t),
  \qquad u_0(t)=t.
\]
Since $u_{k-1}=u_k(1+u_k)$,
\begin{equation}
  \frac1{u_k}-\frac1{u_{k-1}}=\frac1{1+u_k}.                            \label{eq:g1-reciprocal-increment}
\end{equation}
There is $c_0<1$ such that $0<u_k(t)\le c_0$ for every $k\ge1$ and
$t\in J$.
Summing \eqref{eq:g1-reciprocal-increment} therefore gives, uniformly for
$t\in J$,
\begin{equation}
  u_n(t)\asymp(n+1)^{-1}.                                                \label{eq:g1-un-size}
\end{equation}
Differentiation gives
\[
  u_n'(t)=\prod_{k=1}^n(1+2u_k(t))^{-1}.
\]
Now
\[
  \prod_{k=1}^n(1+u_k)=\frac{t}{u_n}
\]
and
\[
  \frac{1+2u_k}{(1+u_k)^2}
  =1-\frac{u_k^2}{(1+u_k)^2}.
\]
By \eqref{eq:g1-un-size}, the sum of the terms $u_k^2$ is bounded
uniformly in $t\in J$.  Hence the product of the last display is bounded
above and away from zero, uniformly in $n$ and $t$.  It follows that
\begin{equation}
  |u_n'(t)|\asymp(n+1)^{-2}.                                             \label{eq:g1-un-prime}
\end{equation}
Finally,
\[
  \frac{u_n''(t)}{u_n'(t)}
  =-2\sum_{k=1}^n\frac{u_k'(t)}{1+2u_k(t)}.
\]
The right-hand side is uniformly bounded by
\eqref{eq:g1-un-prime}, and consequently
\begin{equation}
  |u_n''(t)|\le C(n+1)^{-2}.                                             \label{eq:g1-un-second}
\end{equation}

The two length-$n$ first-entrance inverse branches
$v_{n,\epsilon}:Y\to\{\kappa=n\}\subset I\setminus Y$ are given by
$y\mapsto1-u_n(1-y)$ and its reflection.  For $n\ge2$, a
length-$n$ first-return inverse branch is obtained by composing a
length-$(n-1)$ first-entrance inverse with the inverse of the opposite
branch of $g$.  That additional inverse has uniformly bounded first and
second derivatives.  Thus every first-entrance branch of length $n$ and
every first-return branch of length $n$ has inverse $v$ that
satisfies
\begin{equation}
  \|v'\|_\infty+\Var_Y(v')\le C(n+1)^{-2}.                              \label{eq:g1-inverse-branch-BV}
\end{equation}
There are two such branches of each sufficiently large length; the finitely
many remaining lengths are absorbed by increasing $C$.

Let $h=d\mu/d\Leb$ on $Y$.  The inducing argument in the proof of
Lemma~\ref{lem:alpha-one-CMT} gives a Lipschitz representative of $h$ that
is bounded above and away from zero.  If $v_{n,\epsilon}:Y\to
\{\tau=n\}$, $\epsilon\in\{+,-\}$, are the return inverse branches for
$n\ge2$, then
\begin{equation}
  R_nw
  =\sum_{\epsilon\in\{+,-\}}
    p_{n,\epsilon}\,w\circ v_{n,\epsilon},
  \qquad
  p_{n,\epsilon}
  =\frac{h\circ v_{n,\epsilon}}{h}\,|v_{n,\epsilon}'|.                 \label{eq:g1-return-branch-formula}
\end{equation}
Equations~\eqref{eq:g1-inverse-branch-BV} and
\eqref{eq:g1-return-branch-formula}, together with the bounds on $h$ and
$1/h$, give
\[
  \|p_{n,\epsilon}\|_{\operatorname{BV}(Y)}\le C(n+1)^{-2}.
\]
Since
$\Var_Y(w\circ v_{n,\epsilon})
 \le\Var_{\{\tau=n\}\cap Y_\epsilon}(w)
 \le\Var_Y(w)$,
the product rule for variation yields
\[
  \|p_{n,\epsilon}w\circ v_{n,\epsilon}\|_{\mathcal B}
  \le C(n+1)^{-2}\|w\|_{\mathcal B}.
\]
This proves \eqref{eq:g1-Rn-bound}.  More precisely,
\cite[Lemma~4.11]{CMT26}, with the index shift explained in the proof of
Lemma~\ref{lem:alpha-one-CMT}, gives
\[
  \Leb(Y_{-,n})\sim B_+n^{-2},
  \qquad
  \Leb(Y_{+,n})\sim B_-n^{-2}.
\]
Both intervals shrink to $0$, from the left and the right respectively.
Since $h$ is continuous at $0$ and $h(0)>0$,
\[
  \mu(\tau=n)
  =\int_{Y_{-,n}\cup Y_{+,n}}h\,d\Leb
  \sim h(0)(B_++B_-)n^{-2}
  =:c_\tau n^{-2}.
\]
Direct summation gives
\eqref{eq:g1-return-tail}.  In particular,
\begin{equation}
  \mu(\tau=n)\asymp n^{-2},                                              \label{eq:g1-return-atom-size}
\end{equation}
so \eqref{eq:g1-Rn-bound} also has the form
$\|R_n\|\le C\mu(\tau=n)$, after enlarging $C$ for the finitely many
small $n\ge2$.  This is operator-renewal hypothesis \textup{(H1)}; here
$R_1=0$ because $\tau\ge2$ almost everywhere.

The hypotheses (H1)--(H2) of the corrected operator-renewal theorem of
Melbourne--Terhesiu hold on $\mathcal B$.  Indeed,
\eqref{eq:g1-Rn-bound} and \eqref{eq:g1-return-atom-size} give (H1).
We verify the quasicompactness required in (H2).  Let $\alpha_k$ denote
the partition into $k$-cylinders for $F$, let $h_A:Y\to A$ be the inverse
branch corresponding to $A\in\alpha_k$, and let $p_A$ be its Jacobian with
respect to $\mu$.  Write
$\tau_k=\sum_{j=0}^{k-1}\tau\circ F^j$.  Uniform expansion and bounded
distortion give constants
$C<\infty$ and $\Lambda>1$ such that
\[
  \Var_Y(p_A)\le C\sup_Yp_A,
  \qquad
  \sup_Yp_A\le C\mu(A)\le C\Lambda^{-k}.
\]
Since
\[
  \sup_A|v|
  \le \mu(A)^{-1}\int_A|v|\,d\mu+C\Var_A(v),
\]
the product rule for variation, summed over the disjoint cylinders $A$,
gives, uniformly for $|z|\le1$,
\begin{equation}
  \Var_Y(\mathcal R(z)^kv)
  \le C\Lambda^{-k}\Var_Y(v)+C\|v\|_1,
  \qquad
  \|\mathcal R(z)^kv\|_1\le\|v\|_1,                                 \label{eq:g1-uniform-LY}
\end{equation}
because the twisting factor $z^{\tau_k}$ is constant on each
$k$-cylinder and has modulus at most one.  Thus, for some
$\vartheta\in(0,1)$,
\[
  \|\mathcal R(z)^kv\|_{\mathcal B}
  \le C\vartheta^k\|v\|_{\mathcal B}+C\|v\|_1,
  \qquad
  \mathcal R(z)=\sum_{n\ge1}z^nR_n.
\]
The compact embedding $\operatorname{BV}(Y)\hookrightarrow L^1(Y,\mu)$,
together with the preceding Doeblin--Fortet inequality, gives by the
Ionescu--Tulcea--Marinescu compactness theorem that every $\mathcal R(z)$ is
quasicompact on $\mathcal B$, with essential spectral radius strictly smaller
than $1$.  The eigenvalue
$1$ of
$\mathcal R(1)=\sum_{n\ge1}R_n$, the transfer operator of $F$, is simple
and isolated because $F$ is mixing and Gibbs--Markov.
If $|z|<1$, positivity and $\tau\ge2$ imply that a hypothetical eigenvector
$\mathcal R(z)v=v$ would satisfy
\[
  \|v\|_1\le |z|^2\|v\|_1,
\]
which is impossible for $v\ne0$.  Quasicompactness then excludes $1$ from
the spectrum.  It remains to consider $|z|=1$.  If $1$ belonged to the
spectrum of $\mathcal R(z)$, quasicompactness would make it an eigenvalue.
For a nonzero eigenvector $v\in\mathcal B$, the embeddings
$\mathcal B\hookrightarrow L^\infty(Y,\mu)\hookrightarrow L^2(Y,\mu)$ allow
us to apply the standard adjoint argument in
\cite[Proposition~11.1]{MT12}; it gives the almost-everywhere coboundary
relation
\[
  v\circ F=z^\tau v,
\]
Taking absolute values and using the ergodicity of $F$ shows that $|v|$ is a
positive constant almost everywhere.

Let $h_n:Y\to Y_{\epsilon,n}$ be either inverse branch of return length
$n$.  Fix the regulated representative of $v\in\operatorname{BV}(Y)$ that
has its canonical one-sided limits.  Each $h_n^j$ is a nonsingular
$C^1$ diffeomorphism onto its image.  Hence the union, over $j\ge0$, of the
pullbacks under $h_n^j$ of the null set where either the coboundary identity
or the chosen representative fails is still null.  Choose $y$ outside this
union.  Since $F\circ h_n=\operatorname{id}$ and
$\tau\circ h_n=n$, iteration gives
\[
  v(h_n^ky)=z^{-nk}v(y)
  \qquad(k\ge1).
\]
The inverse branch $h_n$ is increasing and contracting, so $h_n^ky$
converges monotonically to its fixed point.  A $BV$ representative of $v$
has the corresponding one-sided limit there.  Since $|v(y)|>0$, the last
display can converge only if $z^n=1$.  Applying this argument to return
branches of lengths $2$ and $3$ gives $z^2=z^3=1$, hence $z=1$.  At $z=1$
the simple eigenvalue already identified is the sole obstruction.  This
proves \textup{(H2)}.  Write
\[
  \mu(\tau>n)=\ell(n)n^{-1};
\]
by \eqref{eq:g1-return-tail}, $\ell(n)\to c_\tau$.  The corrected form of
\cite[Theorem~2.1]{MT12} states at exponent one that
\[
  \left(\sum_{j=1}^n\frac{\ell(j)}j\right)T_n
  \longrightarrow P
  \quad\text{in operator norm}
\]
for the rank-one spectral projection $P$.  Since the sum in parentheses is
asymptotic to $c_\tau\log n$, this proves
\eqref{eq:g1-Tn-bound}.

We next prove the odd estimate; this step is not part of the quoted renewal
theorem.  Put
\[
  \mathcal R(z)=\sum_{n\ge1}R_nz^n,
  \qquad
  \mathcal T(z)=\sum_{n\ge0}T_nz^n.
\]
For $|z|<1$, the renewal identity is
\begin{equation}
  \mathcal T(z)=(I-\mathcal R(z))^{-1}.                                  \label{eq:g1-renewal-identity}
\end{equation}
Reflection preserves $Y$ and $\tau$, and $g(-x)=-g(x)$ away from the
irrelevant point $0$.  Uniqueness of $\mu$ after the normalization
$\mu(Y)=1$ shows that $\mu$ is symmetric.  Hence $\mathcal J$ commutes with
$R_n$ and $T_n$.  At $z=1$, the only $1$-eigenfunctions of
$\mathcal R(1)$ are constant, and hence even.  Quasicompactness and the
simplicity of that eigenvalue therefore imply that
$I-\mathcal R(1)$ is invertible on $\mathcal B_{\rm o}$.  At every other
$z$ with $|z|=1$, invertibility follows from (H2); for $|z|<1$ it follows
from \eqref{eq:g1-renewal-identity}.  Consequently
\[
  \mathcal T_{\rm o}(z)
  :=(I-\mathcal R(z)|_{\mathcal B_{\rm o}})^{-1}
\]
extends continuously to the closed unit disc and is uniformly bounded
there.

Fix $\eta\in(0,1)$.  From \eqref{eq:g1-Rn-bound},
\[
  \sum_{n\ge1}n^\eta\|R_n\|<\infty.
\]
It follows that $t\mapsto\mathcal R(e^{it})$ is $\eta$-H\"older in
operator norm.  The resolvent identity gives the same property for
$t\mapsto\mathcal T_{\rm o}(e^{it})$.  Cauchy's formula on $|z|=\rho<1$
and uniform convergence as $\rho\uparrow1$ show that its $n$th boundary
Fourier coefficient is $T_n|_{\mathcal B_{\rm o}}$.  For $n\ge1$, taking
the difference of the Fourier integral with its translate by $\pi/n$ gives
\[
  2\|T_n|_{\mathcal B_{\rm o}}\|
  \le \frac1{2\pi}\int_{-\pi}^{\pi}
       \|\mathcal T_{\rm o}(e^{i(t+\pi/n)})
          -\mathcal T_{\rm o}(e^{it})\|\,dt
  \le C_\eta n^{-\eta}.
\]
The case $n=0$ is absorbed into the constant, proving
\eqref{eq:g1-odd-renewal}.

For $q\ge0$, define
\[
  A_qw=L^q(1_{\{\tau>q\}}w),
  \qquad w\in\mathcal B,
\]
where $w$ is extended by zero outside $Y$.  Partitioning an orbit segment
according to its last visit to $Y$ gives the exact identity
\begin{equation}
  L^n(1_Yw)=\sum_{j=0}^n A_{n-j}T_jw.                                   \label{eq:g1-last-return-identity}
\end{equation}
Here is a duality verification, which also fixes the endpoint conventions in
this decomposition.  For $0\le j\le n$, set
\[
 E_j^{(n)}=\{x\in Y:g^jx\in Y,
                    \ \tau(g^jx)>n-j\}.
\]
Modulo null sets, the $E_j^{(n)}$ are disjoint and cover $Y$: the index $j$
is the last visit to $Y$ during the time interval $[0,n]$ (and $j=0$ is
always available because the orbit starts in $Y$).  For every bounded
measurable test function $\psi$ on $I$, transfer-operator duality applied
first for $n-j$ steps and then for $j$ steps gives
\begin{align*}
 \int_I\psi\,A_{n-j}T_jw\,d\mu
 &=\int_Y
    1_{\{\tau>n-j\}}(y)\psi(g^{n-j}y)T_jw(y)\,d\mu(y)\\
 &=\int_Y1_{E_j^{(n)}}(x)\psi(g^nx)w(x)\,d\mu(x).
\end{align*}
Summing over $j$ gives the dual pairing with
$L^n(1_Yw)$, proving \eqref{eq:g1-last-return-identity} in $L^1(\mu)$.
Moreover, by \eqref{eq:g1-return-tail} and the embedding
$\operatorname{BV}(Y)\hookrightarrow L^\infty(Y)$,
\begin{equation}
  \|A_qw\|_1
  \le \int_{\{\tau>q\}}|w|\,d\mu
  \le C(q+1)^{-1}\|w\|_{\mathcal B}.                                  \label{eq:g1-Aq-bound}
\end{equation}
If $w$ is odd, then every $T_jw$ is odd.  Hence
\eqref{eq:g1-odd-renewal}--\eqref{eq:g1-Aq-bound} imply
\[
  \|L^n(1_Yw)\|_1
  \le C_\eta\sum_{j=0}^n
       (n-j+1)^{-1}(j+1)^{-\eta}\|w\|_{\mathcal B}
  \le C_\eta(n+1)^{-\eta}\log(n+2)\|w\|_{\mathcal B}.
\]
Taking $\eta=3/4$ and enlarging the constant proves
\eqref{eq:g1-odd-global}.

It remains to prove the entrance assertions.  The first-entrance inverse
branches and \eqref{eq:g1-inverse-branch-BV} give, for $c\ge1$,
\[
  b_c(y)=\frac1{2h(y)}
    \sum_{\epsilon\in\{+,-\}}|v_{c,\epsilon}'(y)|,
\]
where now $v_{c,\epsilon}$ denotes the inverse of the corresponding
first-entrance branch.  Thus
$\|b_c\|_{\mathcal B}\le C(c+1)^{-2}$; the case $c=0$ follows from the
regularity and positivity of $h$ on $Y$.  The points that have not entered
$Y$ by time $c$ form two endpoint intervals of length $O((c+1)^{-1})$ by
\eqref{eq:g1-un-size}.  This proves \eqref{eq:g1-entrance-bounds}.

Partitioning an initial orbit according to its first entrance into $Y$
gives
\begin{equation}
  U_r=\sum_{c=0}^rT_{r-c}b_c.                                            \label{eq:g1-first-entrance-renewal}
\end{equation}
Indeed, if $\psi$ is bounded on $Y$, then the definition of $b_c$ and two
applications of transfer-operator duality give
\[
 \int_Y\psi\,T_{r-c}b_c\,d\mu
 =\int_I1_{\{\kappa=c\}}(x)1_Y(g^rx)
      \psi(g^rx)v_0(x)\,d\mu(x).
\]
The sets $\{\kappa=c\}$, $0\le c\le r$, partition
$\{x:g^rx\in Y\}$ modulo null sets.  Summation therefore gives
$\int_Y\psi U_r\,d\mu$ and proves
\eqref{eq:g1-first-entrance-renewal} in $L^1(\mu)$.
Equations~\eqref{eq:g1-Tn-bound}, \eqref{eq:g1-entrance-bounds}, and a
split at $c=r/2$ yield
\[
  \|U_r\|_{\mathcal B}
  \le C\sum_{c=0}^r
       \frac{(c+1)^{-2}}{\log(r-c+2)}
  \le \frac{C}{\log(r+2)}.
\]
Finally,
\[
  m(g^{-r}Y)=\int_YU_r\,d\mu
  \le \|U_r\|_\infty\mu(Y),
\]
which completes the proof of \eqref{eq:g1-base-occupation}.
\end{proof}

\begin{lemma}
\label{lem:g1-signed-first-hit}
In the notation of Lemma~\ref{lem:g1-critical-renewal}, let
\[
  \sigma(x)=
  \begin{cases}
    -1,&x<0,\\
    1,&x>0,
  \end{cases}
\]
with an arbitrary value at $0$.  Put $v_r=L^rv_0$ and $q_r=\sigma v_r$.
For $\ell\ge0$, define
\[
  w_{r,\ell}
  =1_YL^\ell(1_{\{\kappa=\ell\}}q_r).
\]
Then $w_{r,\ell}\in\mathcal B_{\rm o}$ and
\begin{equation}
  \sup_{r\ge0}\sum_{\ell\ge0}\|w_{r,\ell}\|_{\mathcal B}<\infty.     \label{eq:g1-first-hit-BV-sum}
\end{equation}
If $P_r=(g^r)_*m$ and the integers $K,r\ge0$ satisfy
$K+1\ge(r+1)/2$, then
\begin{equation}
  P_r(\kappa>K)\le \frac{C}{\log(r+2)}.                                 \label{eq:g1-forward-residual}
\end{equation}
Consequently, for all integers $k\ge r\ge0$,
\begin{equation}
  \|L^kq_r\|_{L^1(\mu)}
  \le C\left\{\frac1{\log(r+2)}+(k+1)^{-1/2}\right\}.                \label{eq:g1-signed-transfer-bound}
\end{equation}
\end{lemma}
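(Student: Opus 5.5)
The plan is to prove the three assertions in order. Symmetry gives oddness, a first-entrance decomposition gives \eqref{eq:g1-first-hit-BV-sum}, a last-visit decomposition gives \eqref{eq:g1-forward-residual}, and a split at time $K\approx k/2$ combines these with \eqref{eq:g1-odd-global} to give \eqref{eq:g1-signed-transfer-bound}.

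\emph{Oddness.} Since $m$ and $\mu$ are symmetric, $v_0$ is even. Since $g(-x)=-g(x)$ off a null set, $L$ commutes with $\mathcal J$, so $v_r=L^rv_0$ is even. Because $\sigma$ is odd, $q_r$ is odd. The sets $\{\kappa=\ell\}$ and $Y$ are symmetric, so $w_{r,\ell}$ is odd. Moreover $L$ is positive and $L1=1$ with respect to $\mu$. Since $v_0=dm/d\mu$ is bounded (the invariant density is bounded below on $I$), we get $0\le v_r\le\|v_0\|_\infty$ uniformly in $r$.

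\emph{The summability \eqref{eq:g1-first-hit-BV-sum}.} For $\ell\ge1$, use the two first-entrance inverse branches $v_{\ell,\epsilon}$ from the proof of Lemma~\ref{lem:g1-critical-renewal}. On $\{\kappa=\ell\}\subset I\setminus Y$, $\sigma$ is constant on each side, so
\[
  w_{r,\ell}=\sum_{\epsilon}\pm\,p_{\ell,\epsilon}\,(v_r\circ v_{\ell,\epsilon}),
\]
with $\|p_{\ell,\epsilon}\|_{\operatorname{BV}}\le C(\ell+1)^{-2}$ by \eqref{eq:g1-inverse-branch-BV}. The product rule then reduces everything to a bound on $\Var_Y(v_r\circ v_{\ell,\epsilon})\le\Var_{\{\kappa=\ell\}}(v_r)$ that is uniform in $r$ and $\ell$. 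This is the step I expect to be the main obstacle. I would attack it with a Thaler-type invariant cone: in each endpoint coordinate $u$, pushforwards of Lebesgue under the convex and concave branches stay monotone in $u$ on each half of $I\setminus Y$ (equivalently, the Lebesgue density, divided by the invariant density, stays in a cone of monotone functions). With the sup bound above, this gives $\Var_{I\setminus Y}(v_r)\le C$. If monotonicity alone fails, I would fall back on the log-Lipschitz cone controlled by the distortion bound \eqref{eq:g1-un-second}. Summing $C(\ell+1)^{-2}$ over $\ell$, plus the $\ell=0$ term $\|1_Yq_r\|_{\mathcal B}$ (handled by the same cone estimate on $Y$), gives \eqref{eq:g1-first-hit-BV-sum}.

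\emph{The residual bound \eqref{eq:g1-forward-residual}.} We have $P_r(\kappa>K)=m(\{x:g^{r+s}x\notin Y,\ 0\le s\le K\})$. Split according to the last visit $j\le r$ of the orbit to $Y$. If there is no such visit, then $\kappa(x)>r+K$, with mass at most $C/(r+K+1)$ by \eqref{eq:g1-entrance-bounds}. Otherwise $g^jx\in Y$ and $\tau(g^jx)>r-j+K$, so by duality the contribution is $\int_YU_j1_{\{\tau>r-j+K\}}\,d\mu$. This is bounded using \eqref{eq:g1-base-occupation} and \eqref{eq:g1-return-tail}:
\[
  \sum_{j=0}^r\frac{C}{\log(j+2)}\cdot\frac{C}{r-j+K+1}.
\]
Splitting at $j=\sqrt r$, the small-$j$ terms contribute $O(\sqrt r/r)$. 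The remaining terms contribute at most $\frac{C}{\log(r+2)}\log\frac{r+K+1}{K+1}$, and since $K+1\ge(r+1)/2$ the logarithm is bounded. This gives $C/\log(r+2)$.

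\emph{The signed bound \eqref{eq:g1-signed-transfer-bound}.} Take $K=\lfloor k/2\rfloor$, so that $K+1\ge(k+1)/2\ge(r+1)/2$. Decompose
\[
  q_r=1_{\{\kappa>K\}}q_r+\sum_{\ell\le K}1_{\{\kappa=\ell\}}q_r .
\]
For the residual term, $\|L^k(1_{\{\kappa>K\}}q_r)\|_1\le\int1_{\{\kappa>K\}}v_r\,d\mu=P_r(\kappa>K)$, which is at most $C/\log(r+2)$ by \eqref{eq:g1-forward-residual}. For $\ell\le K$, $g^\ell$ maps $\{\kappa=\ell\}$ into $Y$, so
\[
  L^k(1_{\{\kappa=\ell\}}q_r)=L^{k-\ell}(1_Yw_{r,\ell}).
\]
Since $w_{r,\ell}$ is odd, \eqref{eq:g1-odd-global} bounds this by $C(k-\ell+1)^{-1/2}\|w_{r,\ell}\|_{\mathcal B}$, which is at most $C(k/2+1)^{-1/2}\|w_{r,\ell}\|_{\mathcal B}$. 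Summing over $\ell$ with \eqref{eq:g1-first-hit-BV-sum} gives the $(k+1)^{-1/2}$ term.
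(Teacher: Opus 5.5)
Your oddness argument, the residual bound \eqref{eq:g1-forward-residual}, and the final split at $K=\lfloor k/2\rfloor$ are correct and essentially the paper's. The residual bound uses the same last-visit decomposition; keeping the index $j=r$ only overcounts. The gap is in \eqref{eq:g1-first-hit-BV-sum}.

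The bound $\|p_{\ell,\epsilon}\|_{\operatorname{BV}}\le C(\ell+1)^{-2}$ is false. \eqref{eq:g1-inverse-branch-BV} controls only the Lebesgue derivative $v_{\ell,\epsilon}'$. The $\mu$-Jacobian is $p_{\ell,\epsilon}=(h\circ v_{\ell,\epsilon}/h)\,|v_{\ell,\epsilon}'|$, and $v_{\ell,\epsilon}$ lands on a first-entrance level at distance $\asymp\ell^{-1}$ from a neutral endpoint. There the infinite invariant density is of order $\ell$. Concretely, $\int_Y p_{\ell,\epsilon}\,d\mu=\mu(\{\kappa=\ell\}\cap I_\epsilon)$, and summing over $\ell$ and $\epsilon$ gives $\mu(I\setminus Y)=\infty$. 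So no summable bound on $\|p_{\ell,\epsilon}\|$ is possible; in fact $\|p_{\ell,\epsilon}\|\asymp\ell^{-1}$. In the paper's $b_c$ and $d_{r,\ell}$, the factor $h\circ v$ is cancelled by the density $v_0=1/(2h)$, but for $v_r$ with $r\ge1$ there is no such cancellation. Hence the estimate you set as your target, bounds on $\sup v_r$ and $\Var_{\{\kappa=\ell\}}(v_r)$ uniform in $r$ and $\ell$, yields only the divergent series $\sum_\ell\ell^{-1}$.

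To rescue your route you would need level-adapted information. For instance, on $E_\ell=\{\kappa=\ell\}\cap I_\epsilon$ you could use $\sup_{E_\ell}v_r\le P_r(E_\ell)/\mu(E_\ell)+\Var_{E_\ell}(v_r)$, together with a bound on the \emph{total} variation $\Var_{I\setminus Y}(v_r)$ that is uniform in $r$. That uniform bound is exactly the step you leave as a plan, and the suggested monotone cone is not evidently invariant here. On the right neutral region, $Lv$ combines $v$ at a deeper point with $v$ at a point of $Y\cap(-a,0)$ fed in by the left branch. Also, monotonicity of the Lebesgue density of $(g^r)_*m$ is not equivalent to monotonicity of $v_r$, which is that density divided by $h$.

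The missing idea is the paper's way of avoiding any regularity of $v_r$ off $Y$. Decomposing by the last visit $j<r$ to $Y$, it writes $w_{r,0}=\sigma U_r$ and $w_{r,\ell}=d_{r,\ell}+\sum_{j=0}^{r-1}R^{\sigma}_{r-j+\ell,\,r-j}U_j$ for $\ell\ge1$. The marked operators act on complete return branches, on which $\sigma\circ g^d$ is constant and the Jacobian is evaluated inside $Y$, so $\|R^\sigma_{N,d}\|\le C(N+1)^{-2}$. Moreover $\|d_{r,\ell}\|_{\mathcal B}\le C(r+\ell+1)^{-2}$, and $\|U_j\|_{\mathcal B}\le C/\log(j+2)$ is already supplied by \eqref{eq:g1-base-occupation}. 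Summing over $\ell$ and then over $j$, with a split at $j=r/2$, gives the uniform bound.
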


\begin{proof}
Both $m$ and $\mu$ are symmetric, and $g$ commutes with reflection modulo
the null orbit of the discontinuity.  Hence $v_r$ is even, $q_r$ is odd,
and the reflection invariance of $Y$ and $\kappa$ shows that every
$w_{r,\ell}$ is odd.

We prove \eqref{eq:g1-first-hit-BV-sum} without applying distortion to an
arbitrary measurable cut.  For $N\ge1$ and $0\le d<N$, define the marked
complete-return operator
\[
  R_{N,d}^{\sigma}w
  =1_YL^N(1_{\{\tau=N\}}(\sigma\circ g^d)w).
\]
For $N\ge2$, if a return starts in the negative half of $Y$, its sign is
negative at age zero and positive at every age $1,\ldots,N-1$; for the
reflected branch the signs are reversed.  Thus $\sigma\circ g^d$ is
constant on each of the two complete return branches.  For $N=1$ the
operator is zero.  The branch calculation in
\eqref{eq:g1-return-branch-formula} therefore gives
\begin{equation}
  \|R_{N,d}^{\sigma}w\|_{\mathcal B}
  \le C(N+1)^{-2}\|w\|_{\mathcal B}
  \qquad(0\le d<N).                                                       \label{eq:g1-marked-return-bound}
\end{equation}

For $\ell\ge1$, let
\[
  d_{r,\ell}
  =1_YL^{r+\ell}
      \bigl(1_{\{\kappa=r+\ell\}}(\sigma\circ g^r)v_0\bigr).
\]
On either complete first-entrance branch of length $r+\ell$, the orbit has
constant sign at each age smaller than $r+\ell$.  The first-entrance version
of \eqref{eq:g1-inverse-branch-BV} hence gives
\begin{equation}
  \|d_{r,\ell}\|_{\mathcal B}
  \le C(r+\ell+1)^{-2}.                                                   \label{eq:g1-marked-entrance-bound}
\end{equation}

Consider an initial orbit whose position at time $r$ first reaches $Y$
after a further $\ell\ge1$ iterates.  Either the initial orbit has not
visited $Y$ by time $r$, or it has a unique last visit at some
$j\in\{0,\ldots,r-1\}$.  In the second case its next return length is
$r-j+\ell$, and its age at time $r$ is $r-j$.  This disjoint path
decomposition gives the exact density identity
\begin{equation}
  w_{r,\ell}
  =d_{r,\ell}
   +\sum_{j=0}^{r-1}
       R_{r-j+\ell,r-j}^{\sigma}U_j,
  \qquad \ell\ge1.                                                       \label{eq:g1-marked-last-visit}
\end{equation}
To verify the identity, test both sides against a bounded measurable
$\psi:Y\to\mathbb C$.  By duality, the pairing with the left-hand side is
\begin{equation}
 \int_I (\sigma\circ g^r)(x)\,
   1_{\{\kappa(g^rx)=\ell\}}
   \psi(g^{r+\ell}x)v_0(x)\,d\mu(x).
 \label{eq:g1-marked-left-duality}
\end{equation}
The pairing with $d_{r,\ell}$ is the restriction of this integral to
$\{\kappa(x)=r+\ell\}$, namely to orbits having no visit to $Y$ before time
$r+\ell$.  For the term indexed by $j$, duality first through $U_j$ and then
through $R_{r-j+\ell,r-j}^{\sigma}$ gives the restriction to
\[
 \{x:g^jx\in Y,
       \ \tau(g^jx)=r-j+\ell\}.
\]
On this set the mark is
$\sigma(g^{j+(r-j)}x)=\sigma(g^rx)$ and the endpoint is
$g^{j+(r-j+\ell)}x=g^{r+\ell}x$.  The event in
\eqref{eq:g1-marked-left-duality} is the disjoint union of the no-previous-
visit event and these events indexed by the unique last visit
$j\in\{0,\ldots,r-1\}$.  Equality of the pairings for every $\psi$ proves
\eqref{eq:g1-marked-last-visit} in $L^1(\mu)$.
The upper limit is $r-1$: a point in $Y$ at time $r$ has $\kappa=0$.
Also
\[
  w_{r,0}=1_Yq_r=\sigma U_r.
\]
Multiplication by $\sigma$ is bounded on $\operatorname{BV}(Y)$, since it
introduces only one jump.  Using \eqref{eq:g1-base-occupation},
\eqref{eq:g1-marked-return-bound},
\eqref{eq:g1-marked-entrance-bound}, and
\eqref{eq:g1-marked-last-visit}, we obtain
\begin{align*}
  \sum_{\ell\ge0}\|w_{r,\ell}\|_{\mathcal B}
  &\le C+C\sum_{j=0}^{r-1}
       \frac1{\log(j+2)}
       \sum_{\ell\ge1}(r-j+\ell+1)^{-2}\\
  &\le C+C\sum_{j=0}^{r-1}
       \frac1{\log(j+2)(r-j+1)}
  \le C.
\end{align*}
For the last inequality, split the sum at $j=r/2$.  On the first part use
$(r-j+1)^{-1}\le C(r+1)^{-1}$, and on the second use
$1/\log(j+2)\le C/\log(r+2)$ and the harmonic-sum bound.  This proves
\eqref{eq:g1-first-hit-BV-sum}.

The same disjoint last-visit decomposition, now without the sign, gives
the exact probability identity
\begin{equation}
  P_r(\kappa>K)
  =m(\kappa>r+K)
   +\sum_{j=0}^{r-1}
      \int_Y1_{\{\tau>r-j+K\}}U_j\,d\mu.                               \label{eq:g1-residual-identity}
\end{equation}
Indeed, transfer-operator duality identifies the summand indexed by $j$ with
\[
 m\{x:g^jx\in Y,\ \tau(g^jx)>r-j+K\}.
\]
The event $\{\kappa(g^rx)>K\}$ is, modulo null sets, the disjoint union of
$\{\kappa(x)>r+K\}$ and the displayed events: in the latter case $j$ is the
unique last visit to $Y$ no later than time $r-1$, and the return following
that visit occurs more than $K$ iterates after time $r$.  This proves
\eqref{eq:g1-residual-identity}.  By
\eqref{eq:g1-return-tail}, \eqref{eq:g1-entrance-bounds}, and
\eqref{eq:g1-base-occupation},
\[
  P_r(\kappa>K)
  \le \frac{C}{r+K+1}
    +C\sum_{j=0}^{r-1}
       \frac1{\log(j+2)(r-j+K+1)}.
\]
If $K+1\ge(r+1)/2$, then the last denominator is bounded below by a
constant multiple of $r+1$, while
\[
  \sum_{j=0}^{r-1}\frac1{\log(j+2)}
  \le \frac{C(r+1)}{\log(r+2)}.
\]
This proves \eqref{eq:g1-forward-residual}.

Finally, first entrance from the position at time $r$ gives
\begin{equation}
  L^kq_r
  =L^k(1_{\{\kappa>k\}}q_r)
   +\sum_{\ell=0}^kL^{k-\ell}w_{r,\ell}.                                \label{eq:g1-current-first-hit}
\end{equation}
For clarity, this identity also follows directly by duality.  The disjoint
partition
\[
 I=\{\kappa>k\}\mathbin{\dot\cup}
     \bigcup_{\ell=0}^k\{\kappa=\ell\}
 \quad\text{modulo the null set }\{\kappa=\infty\}
\]
is applied to the density $q_r$.  On $\{\kappa=\ell\}$, transfer for the
first $\ell$ iterates produces $w_{r,\ell}$ on $Y$, and transfer for the
remaining $k-\ell$ iterates produces
$L^{k-\ell}w_{r,\ell}$.  Testing against arbitrary bounded functions proves
\eqref{eq:g1-current-first-hit} in $L^1(\mu)$.
Put $K=\lfloor k/2\rfloor$.  For $\ell\le K$, apply
\eqref{eq:g1-odd-global} and then
\eqref{eq:g1-first-hit-BV-sum}.  For $\ell>K$, use $L^1$ contraction and
the disjointness of the first-hit sets.  The first term in
\eqref{eq:g1-current-first-hit} has $L^1$ norm at most
$P_r(\kappa>k)$.  We obtain
\begin{align*}
  \|L^kq_r\|_1
  &\le C(k+1)^{-1/2}
       \sum_{\ell=0}^K\|w_{r,\ell}\|_{\mathcal B}
       +P_r(\kappa>k)
       +\sum_{\ell>K}\|w_{r,\ell}\|_1\\
  &\le C(k+1)^{-1/2}+2P_r(\kappa>K).
\end{align*}
When $k\ge r$, one has $K+1\ge(r+1)/2$, so
\eqref{eq:g1-forward-residual} proves
\eqref{eq:g1-signed-transfer-bound}.
\end{proof}

\begin{proposition}
\label{prop:g1-dyadic-example}
For the symmetric quadratic map $g_1\in\mathfrak F_*$, namely the $p=1$
member of Example~\ref{ex:symmetric-family},
\[
  g_1(x)=
  \begin{cases}
    x+(1+x)^2,&-1\le x\le0,\\
    x-(1-x)^2,&0<x\le1,
  \end{cases}
\]
there is a countable uniformly dense set
$\mathcal D\subset C([-1,1])$ such that, for every
$\phi\in\mathcal D$, there is $C_\phi<\infty$ satisfying
\begin{equation}
  \sum_{0\le i,j<n}
  \left|
    \Cov_m\bigl(\phi\circ g_1^{2^i},\phi\circ g_1^{2^j}\bigr)
  \right|
  \le C_\phi n\log(n+2)                                                  \label{eq:g1-dyadic-double-sum}
\end{equation}
for every $n\ge1$.  In particular, the hypothesis of
Corollary~\ref{cor:dyadic} holds for $g_1$, for example with
$\delta_\phi=1/2$ for every $\phi\in\mathcal D$.  Consequently,
\[
  \mu_n^{\mathrm{dyad}}(x)\weakstar
  \frac12\delta_{-1}+\frac12\delta_1
\]
for $m$-almost every $x$, and $m(X_{\mathrm{dyad}})=0$.
\end{proposition}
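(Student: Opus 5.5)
We take $\mathcal D$ to be a countable family of \emph{endpoint-flat} functions. For each $\phi\in\mathcal D$ there is $\eps_\phi>0$ such that $\phi$ is constant on $U_{\eps_\phi}^+$ and on $U_{\eps_\phi}^-$. A concrete choice is rational-coefficient polynomials composed with a fixed family of piecewise-linear maps that collapse $[1-1/N,1]$ to $1$ and $[-1,-1+1/N]$ to $-1$. This family is countable. It is uniformly dense in $C(I)$: any continuous function is uniformly close to its composition with such a collapse, by uniform continuity. For $\phi\in\mathcal D$ we write
\[
  \phi=c+b\,\sigma+\psi,
  \qquad c=\tfrac12(\phi(1)+\phi(-1)),\quad b=\tfrac12(\phi(1)-\phi(-1)),
\]
with $\sigma$ as in Lemma~\ref{lem:g1-signed-first-hit}. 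Then $\psi$ is bounded and supported in a compact set $K=[-1+\eps_\phi,1-\eps_\phi]$. The constant $c$ contributes nothing to covariances. By bilinearity, $\Cov_m(\phi\circ g^{s},\phi\circ g^{t})$ with $s=2^i$, $t=2^j$ splits into a $\sigma$--$\sigma$ term and terms containing at least one factor $\psi\circ g^{s}$ or $\psi\circ g^{t}$.

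\emph{Terms containing $\psi$.} Since $|\Cov_m(F,\psi\circ g^r)|\le 2\|F\|_\infty\|\psi\|_\infty\,m(g^{-r}K)$, it suffices to bound $m(g^{-r}K)$. The compact set $K$ is covered, modulo endpoints, by $Y$ and finitely many atoms $X_{\pm,n}$, $n\le n_0(\phi)$. Each $X_{\pm,n}\subset g^{-n}Y$. Hence \eqref{eq:g1-base-occupation} gives
\[
  m(g^{-r}K)\le\sum_{n=0}^{n_0}m(g^{-(r+n)}Y)\le \frac{C_\phi}{\log(r+2)}.
\]
With $r=2^{\min(i,j)}$, every such term is $O_\phi(1/(\min(i,j)+1))$.

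\emph{The $\sigma$--$\sigma$ term.} This is where the signed renewal estimate enters, and it is the only delicate step. By symmetry of $m$ and oddness of $\sigma$, $\int\sigma\circ g^r\,dm=0$. Hence for $i<j$ the covariance equals
\[
  \int_I\sigma\circ g^{s}\,\sigma\circ g^{t}\,v_0\,d\mu
  =\int_I\sigma\,L^{t-s}(q_s)\,d\mu,
  \qquad q_s=\sigma L^sv_0,
\]
by transfer-operator duality. Its absolute value is at most $\|L^{t-s}q_s\|_{L^1(\mu)}$. Because $j\ge i+1$, we have $k:=t-s=2^j-2^i\ge 2^i=s$. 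Therefore \eqref{eq:g1-signed-transfer-bound} applies and gives the bound $C\{1/\log(2^i+2)+2^{-(j-1)/2}\}\le C\{(i+1)^{-1}+2^{-j/2}\}$. The diagonal terms are at most $4\|\phi\|_\infty^2$.

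\emph{Summation and conclusion.} Combining the two cases, for $i<j$ every covariance is bounded by $C_\phi\{(i+1)^{-1}+2^{-j/2}\}$. Summing,
\[
  \sum_{0\le i,j<n}|\Cov_m|
  \le C_\phi n+2C_\phi\sum_{j<n}\Bigl(\sum_{i<j}\frac1{i+1}+j2^{-j/2}\Bigr)
  \le C_\phi\,n\log(n+2),
\]
which is \eqref{eq:g1-dyadic-double-sum}. Since $n\log(n+2)\le C n^{3/2}$, this is \eqref{eq:dyadic-covariance} with $\delta_\phi=1/2$. Proposition~\ref{prop:SNM-balanced} supplies \textup{(SNM)}, and Example~\ref{ex:symmetric-family} identifies $\nu_*=\frac12(\delta_{-1}+\delta_1)$. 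Corollary~\ref{cor:dyadic} then yields almost-sure dyadic convergence and $m(X_{\mathrm{dyad}})=0$.

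The main obstacle is the $\sigma$--$\sigma$ term. Here one-time convergence is useless, and a uniform two-time bound conditioned at time $2^i$ is needed. Lemma~\ref{lem:g1-signed-first-hit} supplies exactly this, with the dyadic gap $2^j-2^i\ge 2^i$ guaranteeing its hypothesis $k\ge r$. The remaining work is bookkeeping of constants.
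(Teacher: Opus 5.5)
Your proposal is correct and follows essentially the same argument as the paper. Both split $\phi=c+b\sigma+\psi$, bound the $\psi$-terms via $m(g^{-r}K)\lesssim 1/\log(r+2)$ (covering $K$ by $Y$ and finitely many $X_{\pm,n}\subset g^{-n}Y$), and bound the $\sigma$--$\sigma$ term by $\|L^{k}q_{r}\|_{L^1(\mu)}$ using the signed transfer bound of Lemma~\ref{lem:g1-signed-first-hit}, with the dyadic gap $2^j-2^i\ge 2^i$ giving $k\ge r$. The only differences are immaterial: you take $\mathcal D$ to be rational polynomials composed with endpoint-collapsing maps rather than rational piecewise-linear functions, and you get the slightly sharper tail $2^{-j/2}$ in place of the paper's $2^{-i/2}$.
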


\begin{proof}
Let $\mathcal D$ be a countable uniformly dense family of continuous
piecewise-linear functions with rational breakpoints and rational values
such that every member is constant on a neighborhood of each endpoint.
Such a family is obtained by first making a continuous function constant on
sufficiently short endpoint intervals and then applying rational
piecewise-linear approximation.

We first estimate the sign observable.  Symmetry gives
\[
  \int\sigma\circ g^t\,dm=0
  \qquad(t\ge0).
\]
For $s=r+k$, transfer-operator duality and the notation of
Lemma~\ref{lem:g1-signed-first-hit} give
\begin{align}
  \left|
    \Cov_m(\sigma\circ g^r,\sigma\circ g^{r+k})
  \right|
  &=\left|\int_I\sigma\,L^k(\sigma v_r)\,d\mu\right|
    \notag\\
  &\le \|L^kq_r\|_1                                                     \notag\\
  &\le C\left\{\frac1{\log(r+2)}+(k+1)^{-1/2}\right\},                \label{eq:g1-sign-covariance}
\end{align}
whenever $k\ge r$.

Fix $\phi\in\mathcal D$ and write
\[
  A_\phi=\frac{\phi(1)+\phi(-1)}2,
  \qquad
  B_\phi=\frac{\phi(1)-\phi(-1)}2,
  \qquad
  \psi=\phi-A_\phi-B_\phi\sigma.
\]
The function $\psi$ is bounded and vanishes on neighborhoods of both
endpoints.  Hence its support is contained in a compact set
$K_\phi\subset(-1,1)$.  Such a compact set is contained in $Y$ together
with finitely many of the Markov levels $X_{\pm,q}$ from
\eqref{eq:CMT-partition}.  Since $g^qX_{\pm,q}\subset Y$, there is an
integer $N_\phi$ such that
\begin{align}
  m(g^{-r}K_\phi)
  &\le m(g^{-r}Y)
      +\sum_{q=1}^{N_\phi}\sum_{\epsilon\in\{+,-\}}
          m(g^{-r}X_{\epsilon,q})                                       \notag\\
  &\le m(g^{-r}Y)+2\sum_{q=1}^{N_\phi}m(g^{-(r+q)}Y)                     \notag\\
  &\le \frac{C_\phi}{\log(r+2)}.                                       \label{eq:g1-compact-occupation}
\end{align}

Also,
\[
  \left|\int\psi\circ g^t\,dm\right|
  \le \|\psi\|_\infty m(g^{-t}K_\phi).
\]
Constants do not affect covariance.  Expanding
$\phi=A_\phi+B_\phi\sigma+\psi$, every term involving $\psi\circ g^r$
is bounded by a constant times $m(g^{-r}K_\phi)$, and every term involving
$\psi\circ g^s$ but not $\psi\circ g^r$ is bounded by a constant times
$m(g^{-s}K_\phi)$; the displayed estimate also controls the corresponding
products of means.  Thus
\eqref{eq:g1-sign-covariance} and
\eqref{eq:g1-compact-occupation} imply, for $s\ge2r\ge2$,
\begin{equation}
  \left|
    \Cov_m(\phi\circ g^r,\phi\circ g^s)
  \right|
  \le C_\phi\left\{
       \frac1{\log(r+2)}+(s-r+1)^{-1/2}
     \right\}.                                                          \label{eq:g1-general-covariance}
\end{equation}

Take $r=2^i$ and $s=2^j$ with $j>i$.  Then $s-r\ge r$, and
\eqref{eq:g1-general-covariance} gives
\[
  \left|
    \Cov_m(\phi\circ g^{2^i},\phi\circ g^{2^j})
  \right|
  \le C_\phi\left((i+1)^{-1}+2^{-i/2}\right).
\]
The diagonal terms contribute $O_\phi(n)$.  Therefore
\begin{align*}
  \sum_{0\le i,j<n}
  \left|
    \Cov_m(\phi\circ g^{2^i},\phi\circ g^{2^j})
  \right|
  &\le C_\phi n
    +2C_\phi\sum_{i=0}^{n-2}(n-i-1)
       \left((i+1)^{-1}+2^{-i/2}\right)\\
  &\le C_\phi n\log(n+2).
\end{align*}
This proves \eqref{eq:g1-dyadic-double-sum}.  Since
$n\log(n+2)\le Cn^{3/2}$, the dyadic covariance condition holds with
$\delta_\phi=1/2$.  The final assertions now follow from
Corollary~\ref{cor:dyadic} and the identity $\nu_*=\tfrac12\delta_{-1}
+\tfrac12\delta_1$ proved in Example~\ref{ex:symmetric-family}.
\end{proof}

\section*{Funding}
No funding was received for this work.

\section*{Declaration of competing interest}
The author declares that they have no known competing financial interests or
personal relationships that could have appeared to influence the work
reported in this paper.

\section*{Data availability}
No data were used for the research described in this article.

\end{document}